\documentclass[11pt]{article}

\usepackage{amsfonts, amssymb, amsmath, amsthm,enumerate,esint,multicol, tikz}
\usepackage[colorlinks=true, pdfstartview=FitV, linkcolor=blue,citecolor=blue, urlcolor=blue]{hyperref}
\definecolor{ourRed}{RGB}{255, 81, 81}
\usepackage[shortlabels]{enumitem}
\usepackage{amsmath}
\usepackage{accents}
\usepackage{comment}
\usepackage{graphicx}
\usepackage[capitalize,nameinlink,noabbrev]{cleveref}
\usepackage{physics} 
\usepackage{setspace}
\usepackage{geometry}
\usepackage{lineno}

\pagestyle{myheadings}
\numberwithin{equation}{section}

\newtheorem{Theorem}{Theorem}[section]
\newtheorem{Proposition}[Theorem]{Proposition}
\newtheorem{Observation}[Theorem]{Observation}
\newtheorem{Lemma}[Theorem]{Lemma}

\theoremstyle{definition}

\newtheorem{Example}[Theorem]{Example}

\makeatother\newtheorem{Remark}[Theorem]{Remark}


\newcommand{\B}{\mathcal{B}}

\newcommand{\cals}{\mathcal{S}}

\newcommand{\M}{\operatorname{M}}

\renewcommand{\cals}{\mathcal S}


\newcommand{\Z}{\operatorname{Z}}

\newcommand{\mr}{\operatorname{mr}}

\newcommand{\bx}{\textbf{x}}

\newcommand{\N}{\operatorname{N}}
\newcommand{\PP}{\operatorname{P}}

\title{The inverse eigenvalue problem for probe graphs} 
\author{Emelie Curl\thanks{Department of Mathematics, Statistics, and Computer Science, Hollins University, Roanoke, VA, USA (curlej@hollins.edu)}\and  
J{\"u}rgen Kritschgau\thanks{ Fariborz Maseeh Department of Mathematics and Statistics, Portland State University, Portland, OR, USA (jkritsch@pdx.edu). Research is supported by NSF RTG grant DMS-2136228.} \and Carolyn Reinhart\thanks{Department of Mathematics and Statistics, Swarthmore College, Swarthmore, PA, USA (creinha1@swarthmore.edu)} \and Hein van der Holst\thanks{Department of Mathematics and Statistics, Georgia State University, Atlanta, GA, USA (hvanderholst@gsu.edu)}
}

\begin{document}
\maketitle

\begin{abstract}
In this paper, we initiate the study of the inverse eigenvalue problem for probe graphs. A probe graph is a graph whose vertices are partitioned into probe vertices and non-probe vertices such that the non-probe vertices form an independent set. 
In general, a probe graph is used to represent the set of graphs that can be obtained by adding edges between non-probe vertices. 
The inverse eigenvalue problem for a graph considers a family of matrices whose zero-nonzero pattern is defined by the graph and asks which spectra are achievable by matrices in this family. We ask the same question for probe graphs. We start by establishing bounds on the maximum nullity for probe graphs and defining the probe graph zero forcing number. Next, we focus on graphs of two parallel paths, the unique family of graphs whose (standard) zero forcing number is two. We partially characterize the probe graph zero forcing number of such graphs and prove some necessary structural results about the family. Finally, we characterize probe graphs whose minimum rank is $0, 1, 2, n-2,$ and $n-1$.
\end{abstract}

\section{Introduction}

The inverse eigenvalue problem of a graph (IEPG) seeks to determine which spectra are achievable among all real symmetric matrices whose non-zero off-diagonal entries are determined by the edges of a graph (see \cite{MultMinor, hogben2022inverse}). 
This problem is motivated by inverse problems in the theory of vibrations \cite{Vibrations}, but has become an area of independent interest. 
Determining the achievable spectra for any given graph is a complex problem, so the focus has shifted to various sub-problems and variants. Some sub-questions involve determining the maximum eigenvalue multiplicity (or equivalently, either the maximum nullity or minimum rank) of matrices described by a graph or family of graphs. Computing the maximum multiplicity (or minimum rank), in general, remains unanswered and an active area of research (see \cite{ FH14, fallat2007minimum}).  
In this paper, we introduce a new variant of the inverse eigenvalue problem, which we call the inverse eigenvalue problem for probe graphs. To that end, we determine some bounds on maximum nullity (or minimum rank) and characterize some extreme values of maximum nullity (or minimum rank) for probe graphs.

A {\em graph} $G$ is an ordered pair $G=(V(G), E(G))$ comprising a vertex set $V(G)=\{v_1,\dots,v_n\}$ which contains vertices, $v_i$, and an edge set $E(G)$ consisting of edges, $v_i v_j$ such that $i \neq j$. 
An {\em independent } set of vertices in a graph $G$ is a subset of $V(G)$ so that no two vertices within the set form an edge in $E(G)$. 
The {\em set of all real symmetric matrices associated with a graph $G$}, $\cals(G)$, includes all $n \times n$ symmetric matrices whose $ij$th entry is 0 when $i\not=j$ and $v_iv_j\not\in E(G)$, any non-zero real number when $i\not=j$ and $v_iv_j\in E(G)$, and any real number when $i=j$. 
The  {\em inverse eigenvalue problem for graphs} asks: for a given graph $G$ and a given list of real numbers, does there exist a matrix $A \in \cals(G)$ so that the spectra of $A$ is the given list of real numbers? However, this problem has proven challenging, and several sub-problems are commonly studied. Two sub-questions of much interest comprise establishing the values of minimum rank and maximum nullity. 
The {\em minimum rank} of a graph $G$, denoted $\mr(G)$, is the minimum rank of any matrix in $\cals(G)$ and the {\em maximum nullity} of a graph $G$, denoted $\M(G)$, is the maximum nullity of any matrix $\cals(G)$. 
Note that because the diagonal entries of the matrices in $\cals(G)$ can be any real number, $\M(G)$ is also equivalent to the maximum multiplicity of an eigenvalue. 
Also, it is evident by the rank nullity theorem that $\mr(G)+\M(G)=n$, so these two parameters are studied interchangeably.

In \cite{work2008zero}, in order to provide an upper bound on maximum nullity, the authors defined a new parameter of interest, the zero forcing number of a graph $G$, denoted $\Z(G)$. 
{\em Zero forcing} is a graph coloring process by which an initial set of blue vertices eventually color the entire graph by repeated applications of a color change rule. 
The {\em (standard) zero forcing color change rule} allows a blue vertex to force a white vertex to become blue if and only if it has no other white neighbors. 
Any set of initially blue vertices that are eventually able to force the entire graph is called a {\em zero forcing set}, and the smallest size of a zero forcing set for a graph $G$ is called the {\em zero forcing number} $\Z(G)$. 
It is shown in \cite{work2008zero} that $\M(G)\leq \Z(G)$ for all graphs $G$. 
This bound is known to be tight for certain graph families, such as path and cycle graphs, but is not tight in general. 
Zero forcing has since grown into a parameter of independent interest (see, for example, \cite{FH14},\cite{H20},\cite{HHKMWY12},\cite{L19},\cite{row2012technique}).

The inverse eigenvalue problem for graphs has also been studied for several variants and restrictions. 
For example, the {\em positive semidefinite (PSD) inverse eigenvalue problem for graphs} restricts $\cals(G)$ to the set of matrices that are positive semidefinite. 
This set of matrices is denoted $\cals^+(G)$ and analogous parameters can be defined, such as $\mr^+(G)$, $\M^+(G)$, and $\Z^+(G)$ (\cite{PSD}). 
Often, variants are motivated by additional assumptions about matrices or graphs in an application. 
In our case, we are motivated by the fact that networks are not completely observed and that practitioners need to deal with all possible network structures in an unobserved portion of a network. 
Therefore, we consider probe graphs that model this situation. 

A {\em probe graph} $G^{\N}$ is a graph that contains a set of probes $\PP$ and a set of non-probes $\N$, such that $\N$ is an independent set. 
Traditionally, the study of probe graphs has focused on how edges can be added between non-probe vertices to ensure the resulting graph has a specific structure. 
For example, after edges are added, probe interval graphs are required to be an interval graph, and probe chordal graphs are required to be chordal (see \cite{mcmorris1998probe} and \cite{ProbeChordal}, respectively).
Occasionally, we will treat $G^{\N}$ as the set of graphs containing $G$ and any supergraph of $G$, say $H$, where the edges of $H$ that are not in $G$ are contained in $\N$.

We now introduce the {\em inverse eigenvalue problem for probe graphs}, which seeks to determine all possible spectra achieved by the set of matrices which is described by a probe graph. Let $G$ be a graph on $n$ vertices where $\N =\{ n-k+1, \cdots, n\}$ is an independent set of $G$.
Let $\cals(G^{\N})$ denote the set of matrices \[ A + \begin{bmatrix}O&O\\O&B\end{bmatrix}\]
where $A\in \cals(G)$ and $B$ is a real-symmetric $k\times k$ matrix. Note that this is the same as considering the matrices $\cals(H)$ for all graphs $H$, which result from adding any number of edges between vertices in $\N$.

We can now define parameters analogous to those studied in the IEPG. The {\em minimum rank of a probe graph} $G^{\N}$, denoted $\mr(G^{\N})$, is the minimum rank of any matrix in $S(G^{\N})$ and the {\em maximum nullity of a probe graph} $G^{\N}$, denoted $\M(G^{\N})$, is the maximum nullity of any matrix in $S(G^{\N})$. Note that the equality $\mr(G^{\N})+\M(G^{\N})=n$ still holds. We also define the {\em probe graph zero forcing number}, $\Z(G^{\N})$, to be the smallest set of initially blue vertices that can completely force the probe graph according to the probe graph color change rule (see Section~\ref{sec:ZF}).

We begin the paper by establishing some useful bounds on $\mr(G^{\N})$ and $\M(G^{\N})$ in Section~\ref{sec:bounds}, including showing that $\M(G^{\N})\geq |\N|$. In Section~\ref{sec:ZF}, we turn our attention to the probe graph zero forcing number, establishing the bound $\M(G^{\N})\leq \Z(G^{\N})$. 
We also study graphs on two parallel paths, providing bounds on their probe graph zero forcing number and showing that such graphs characterize the class of graphs such that $\Z(G^{\N})=2$. Section~\ref{sec:Strucpaths} provides necessary conditions for the structure of parallel paths, which are required for our characterization of the case $\mr(G^{\N})=n-2$ in Section~\ref{sec:extreme}. Turning our attention to extreme values of $\mr(G^{\N})$, in Section~\ref{sec:extreme}, we characterize probe graphs with $\mr(G^{\N})=1$, $\mr(G^{\N})= 2$, $\mr(G^{\N})=n-2$, and $\mr(G^{\N})=n-1$.

\section{Bounds on the maximum nullity and minimum rank of probe graphs}\label{sec:bounds}

In this section, we provide bounds for $\M(G^{\N})$ and $\mr(G^{\N})$. We begin with a useful bound regarding the number of non-probe vertices $|\N|$.
\begin{Proposition}\label{prop.gammabound}
For any $G^{\N}$ on $n$ vertices, \[\M(G^{\N})\geq |\N|. \]  
Equivalently, $\mr(G^{\N})\leq n-|\N|.$
\end{Proposition}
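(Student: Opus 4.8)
The plan is to exhibit a single matrix in $\cals(G^{\N})$ whose rank is at most $n-|\N|$; by the identity $\mr(G^{\N})+\M(G^{\N})=n$ noted in the excerpt, this immediately yields both stated inequalities. First I would partition the vertex set into the probe vertices $\PP$ (the first $n-k$ vertices) and the non-probe vertices $\N=\{n-k+1,\dots,n\}$ with $k=|\N|$, and write a general $M\in\cals(G^{\N})$ in the corresponding block form
\[ M = \begin{bmatrix} P & R \\ R^{T} & Q \end{bmatrix}, \]
where $P$ is $(n-k)\times(n-k)$, $R$ is $(n-k)\times k$, and $Q$ is $k\times k$. The crucial structural observation is that, since $\N$ is independent in $G$, the $(\N,\N)$ block of any $A\in\cals(G)$ is diagonal; adding the free symmetric matrix $B$ then means that $Q$ may be taken to be an \emph{arbitrary} real symmetric $k\times k$ matrix, while $P$ and $R$ are merely required to respect the zero-nonzero pattern of $G$.

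Next I would fix any valid choice of $P$ and $R$ and use the total freedom in $Q$ to collapse the rank. I select the off-diagonal entries of $P$ to respect the edge pattern of $G[\PP]$ and then choose its (free) diagonal entries so that $P$ is nonsingular; this is always achievable, for instance by making $P$ strictly diagonally dominant. With $P$ invertible, the Schur-complement factorization
\[ \begin{bmatrix} P & R \\ R^{T} & Q \end{bmatrix} = \begin{bmatrix} I & O \\ R^{T}P^{-1} & I \end{bmatrix} \begin{bmatrix} P & O \\ O & Q - R^{T}P^{-1}R \end{bmatrix} \begin{bmatrix} I & P^{-1}R \\ O & I \end{bmatrix} \]
shows that $\operatorname{rank}(M)=\operatorname{rank}(P)+\operatorname{rank}(Q-R^{T}P^{-1}R)$. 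Since $P^{-1}$ is symmetric, $R^{T}P^{-1}R$ is symmetric, so I can take $Q=R^{T}P^{-1}R$ — realized by setting $B$ equal to this matrix minus the diagonal $(\N,\N)$ block of $A$ — which makes the Schur complement vanish.

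The resulting matrix $M\in\cals(G^{\N})$ then satisfies $\operatorname{rank}(M)=\operatorname{rank}(P)\leq n-k=n-|\N|$, giving $\mr(G^{\N})\leq n-|\N|$ and hence $\M(G^{\N})\geq|\N|$. I expect the only point needing care to be the verification that $P$ can be made invertible while still respecting the prescribed pattern of $G[\PP]$, which is routine via diagonal dominance; everything else is a direct computation. The driving idea is precisely that the non-probe diagonal block is entirely unconstrained, so it can be used as a Schur complement to annihilate the lower-right block and pin the rank to that of the probe-probe block.
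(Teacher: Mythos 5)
Your proof is correct and is essentially the same as the paper's: both make the probe block nonsingular, set the non-probe block equal to $R^{T}P^{-1}R$ (the paper writes this as $D^{\top}AD$ with $AD=B$), and use a block congruence to collapse the rank to that of the probe block. The only cosmetic difference is that you phrase the congruence as a Schur-complement factorization and justify invertibility by diagonal dominance rather than by translation.
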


\begin{proof}
Notice that any matrix in $\cals(G^{\N})$ has the form 
\[\begin{bmatrix}A&B\\B^\top& C\end{bmatrix}\] where $A, B$ are constrained by $G^{\N}$ and $A,C$ are symmetric.
Choose $A$ to have full rank (which can always be done by translation). 
Since $A$ has full rank, a matrix $D$ exists such that $AD=B$. 
Furthermore, let $C= D^\top AD$.
Then 
\begin{align*}
\begin{bmatrix}I&O\\-D^\top &I\end{bmatrix}\begin{bmatrix}A&AD\\D^\top A& C\end{bmatrix}\begin{bmatrix}I&-D\\O &I\end{bmatrix} &= \begin{bmatrix}I&O\\-D^\top &I\end{bmatrix}\begin{bmatrix}A&O\\D^\top A& C-D^\top AD\end{bmatrix}=\begin{bmatrix}A&O\\O&O\end{bmatrix}.
\end{align*}
Thus, we have found a matrix in $\cals(G^{\N})$ with nullity $|\N|$. 
\end{proof}


Note that making $A$ have full rank is potentially wasteful, so this bound will not always be tight. For the argument, it is only necessary that the column space of $B$ is contained in the column space of $A$. However, this bound can be tight, and we provide examples demonstrating this in Section~\ref{sec:ZF}.

In order to study the minimum rank of a probe graph, it is useful to consider the matrices $A$ and $B$ such that $[A\,\,B]$ is the submatrix whose rows correspond to probe vertices. For two matrices $A$ and $B$ with the same number of rows, we can decompose $B$ into two matrices $\text{proj}_A{B}+\text{orth}_A{B}:=B_A+B_{A^\perp}$. In other words, each column of $B_A$ is the projection of the corresponding column of $B$ onto the column space of $A$. We will use this fact to bound $\mr(G^{\N})$ in terms of the rank of $A$ and $B_{A^\perp}$. But first, we prove a useful lemma.

\begin{Lemma} \label{lem.rowspace}
If $B$ is an $n\times m$ matrix, then a symmetric $m\times m$ matrix $A$ exists such that $A$ and $B$ have the same row space.
\end{Lemma}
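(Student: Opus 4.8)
The plan is to exhibit an explicit symmetric matrix built directly from $B$, rather than to construct $A$ abstractly. The key observation is that for a symmetric matrix the row space and the column space coincide, so it suffices to produce a symmetric $m \times m$ matrix whose column space equals the row space of $B$, equivalently the column space of $B^\top$.

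First I would set $A := B^\top B$. This is an $m \times m$ matrix, and it is symmetric since $(B^\top B)^\top = B^\top B$. It then remains only to check that $\operatorname{row}(A) = \operatorname{row}(B)$. Because $A$ is symmetric we have $\operatorname{row}(A) = \operatorname{col}(A) = \operatorname{col}(B^\top B)$, and $\operatorname{col}(B^\top) = \operatorname{row}(B)$ is immediate, so the whole claim reduces to the single linear-algebra identity $\operatorname{col}(B^\top B) = \operatorname{col}(B^\top)$.

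The one substantive step is that identity, which I would deduce from the null-space identity $\operatorname{null}(B^\top B) = \operatorname{null}(B)$. The inclusion $\operatorname{null}(B) \subseteq \operatorname{null}(B^\top B)$ is clear; for the reverse, if $B^\top B x = 0$ then $x^\top B^\top B x = \|Bx\|^2 = 0$, which forces $Bx = 0$. Hence $\operatorname{rank}(B^\top B) = m - \dim \operatorname{null}(B^\top B) = m - \dim \operatorname{null}(B) = \operatorname{rank}(B) = \operatorname{rank}(B^\top)$. Since $\operatorname{col}(B^\top B) \subseteq \operatorname{col}(B^\top)$ always holds and the two subspaces now have equal dimension, they must be equal, completing the argument.

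I expect no serious obstacle here; the only point requiring care is that the implication $\|Bx\|^2 = 0 \Rightarrow Bx = 0$ relies on positive-definiteness of the standard inner product, i.e.\ on working over $\mathbb{R}$, which is exactly the setting of the paper. As an alternative that sidesteps the rank count entirely, I could instead take $A$ to be the orthogonal projection onto $\operatorname{row}(B)$: an orthogonal projection onto a subspace $W$ is symmetric and has column space (hence, being symmetric, row space) equal to $W$, which yields the conclusion directly.
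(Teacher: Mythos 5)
Your proof is correct, but it takes a different route than the paper's. The paper first extracts an orthonormal basis $\{x_1,\dots,x_r\}$ of $\operatorname{row}(B)$, stacks it into an $r\times m$ matrix $R$, and sets $A = R^\top R$; because the rows of $R$ are orthonormal ($RR^\top = I_r$), this $A$ is exactly the orthogonal projector onto $\operatorname{row}(B)$, so the equality of row spaces is essentially immediate --- note this is precisely your fallback suggestion in the last sentence. Your primary construction $A = B^\top B$ skips the orthonormalization entirely and works with $B$ itself, at the cost of having to prove the rank identity $\operatorname{rank}(B^\top B) = \operatorname{rank}(B)$, which you do correctly via $\operatorname{null}(B^\top B) = \operatorname{null}(B)$ and a dimension count. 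What each buys: your version is basis-free and fully explicit (no Gram--Schmidt step hidden in ``choose an orthonormal basis''), while the paper's version makes the row-space claim verifiable at a glance once one knows $R^\top R$ is a projection. Both arguments use the same real-inner-product fact --- yours in the step $\|Bx\|^2 = 0 \Rightarrow Bx = 0$, the paper's in the existence of an orthonormal basis --- so neither is more general than the other, and both are sound in the paper's setting of real matrices.
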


\begin{proof}
Suppose the row space of $B$ has dimension $r$. 
Let $\B_1=\{\bx_1,\dots, \bx_r\}$ be an orthonormal basis of row vectors for the row space of $B$. 
Let $R$ be the $r\times m$ matrix where the rows are the basis vectors in $\B_1$.
Notice that the matrix $A=R^\top R$ is $m\times m$ and symmetric. 
By construction, the row space of $A$ is exactly the row space of $B$.
\end{proof}

A pattern matrix $P$ is a $\{0,*,?\}$-valued matrix, where the $0$ entries model known zero entries, $*$ entries model known non-zero entries, and $?$ model completely unknown entries.  
The pattern matrix $P(A)$ of a matrix $A$ is a $\{0,*,?\}$-valued matrix where
\[P_{ij}=\begin{cases} 
0 & \text{ if $i\neq j$ and } A_{ij}=0\\ 
*& \text{ if $i\neq j$ and } A_{ij}\neq 0\\ 
?&  \text{ if $i= j$ }
\end{cases}.\]
Notice that pattern matrices can be rectangular. 
We say a matrix $A$ realizes the minimum rank of a pattern matrix $P$ if 
\[\mr(A) = \min \{ \mr(B): P(B) = P\}.\]

\begin{Theorem}\label{thm.tophalf}
Let $P$ be the $(n-k)\times n$ pattern matrix corresponding to the probe vertices of $G^{\N}$, and let $[A\,\,B]$ be an $(n-k)\times n$ matrix that realizes the minimum rank of $P.$
Then $\mr(P)=\rank(A)+\rank(B_{A^\perp})\leq \mr(G^{\N}) \leq  \rank(A)+2\rank(B_{A^\perp}) .$
\end{Theorem}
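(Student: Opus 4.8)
The plan is to split the chain into three separate facts: the equality $\mr(P)=\rank(A)+\rank(B_{A^\perp})$, the lower bound $\mr(P)\le \mr(G^{\N})$, and the upper bound $\mr(G^{\N})\le \rank(A)+2\rank(B_{A^\perp})$. For the equality I would argue directly from the orthogonal decomposition $B=B_A+B_{A^\perp}$. Since every column of $B_A$ lies in the column space of $A$, the column space of $[A\;B]$ equals that of $[A\;B_{A^\perp}]$, and because the columns of $B_{A^\perp}$ are orthogonal to (hence independent of) the column space of $A$, the dimensions add: $\rank([A\;B])=\rank(A)+\rank(B_{A^\perp})$. As $[A\;B]$ realizes $\mr(P)$ by hypothesis, the left-hand side is exactly $\mr(P)$.

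For the lower bound I would use that every matrix of $\cals(G^{\N})$ has the block form $\begin{bmatrix}A'&B'\\ B'^\top & C'\end{bmatrix}$ whose first $n-k$ rows $[A'\;B']$ have pattern exactly $P$. Deleting rows cannot increase rank, so $\rank\begin{bmatrix}A'&B'\\ B'^\top & C'\end{bmatrix}\ge \rank([A'\;B'])\ge \mr(P)$, the last inequality by the definition of the minimum rank of the pattern $P$. Taking the minimum over $\cals(G^{\N})$ gives $\mr(G^{\N})\ge\mr(P)$.

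For the upper bound the idea is to complete the top half to a symmetric matrix of controlled rank. After arranging (see below) that the probe--probe block $A$ is symmetric, I would split $B=B_A+B_{A^\perp}$ and write
\[ M=\begin{bmatrix}A&B_A\\ B_A^\top & D^\top A D\end{bmatrix}+\begin{bmatrix}0&B_{A^\perp}\\ B_{A^\perp}^\top & 0\end{bmatrix}, \]
where $D$ satisfies $AD=B_A$ (available since the columns of $B_A$ lie in the column space of $A$). The first summand is symmetric and, by the congruence used in the proof of Proposition~\ref{prop.gammabound}, reduces to $\begin{bmatrix}A&0\\0&0\end{bmatrix}$ and so has rank exactly $\rank(A)$; the second summand is symmetric with rank exactly $2\rank(B_{A^\perp})$, since an off-diagonal block $\begin{bmatrix}0&N\\ N^\top&0\end{bmatrix}$ has rank $2\rank(N)$. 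Subadditivity of rank then gives $\rank(M)\le \rank(A)+2\rank(B_{A^\perp})$, and by construction the probe--probe block is $A$, the off-diagonal block is $B_A+B_{A^\perp}=B$, and the bottom-right block is free, so $M\in\cals(G^{\N})$.

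The main obstacle is the step I deferred: producing a minimum-rank realization of $P$ whose probe--probe block $A$ is symmetric. The top-left block of an arbitrary realization need only carry the (combinatorially symmetric) probe--probe pattern, not be symmetric as a matrix, whereas any symmetric completion $M$ forces that block to be symmetric. This is where Lemma~\ref{lem.rowspace} is meant to enter, replacing $A$ by a symmetric matrix with the same row space and hence the same rank; the delicate point, which I expect to be the crux of the argument, is arranging this replacement so that the probe--probe pattern and the inclusion of the column space of $B_A$ in the column space of $A$ are both preserved. By contrast, the factor of $2$ on $\rank(B_{A^\perp})$ is intrinsic rather than an artifact of the proof: it is the unavoidable cost of inserting the orthogonal part of $B$ symmetrically, as both an off-diagonal block and its transpose, which is precisely why the upper and lower bounds need not coincide.
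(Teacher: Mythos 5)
Your three-part split is correct, and the equality and lower-bound arguments are exactly right: the paper dispatches both in a single line (``$P$ is a sub-pattern of $G^{\N}$''), and your column-space decomposition argument for $\rank([A\,\,B])=\rank(A)+\rank(B_{A^\perp})$ supplies a detail the paper leaves implicit. Your upper bound, however, takes a genuinely different route. The paper completes $[A\,\,B]$ to a single symmetric matrix $Q$ whose bottom-right block is $R^\top R - C^\top AC + C^\top R^\top RD^\top + DR^\top RC$, where $R^\top R$ is produced by Lemma~\ref{lem.rowspace} applied to $B_{A^\perp}$, and then bounds $\rank(Q)$ through an explicit congruence reduction to $\begin{bmatrix} A-DR^\top RD^\top & O\\ O & R^\top R\end{bmatrix}$. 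You instead write the completion as a sum of two symmetric matrices, one congruent to $\begin{bmatrix}A&O\\O&O\end{bmatrix}$ and one of the form $\begin{bmatrix}O&N\\ N^\top&O\end{bmatrix}$ with rank $2\rank(N)$, and finish by subadditivity of rank. Your version is simpler: it never needs Lemma~\ref{lem.rowspace} at all (in the paper that lemma builds the symmetric bottom-right completion with the same row space as $B_{A^\perp}$; it is not, as you guessed, a device for symmetrizing $A$), at the cost of not exhibiting an explicit congruence-diagonalized form of the completing matrix.

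On the step you deferred, namely arranging that the probe--probe block $A$ of a minimum-rank realization is symmetric: the paper does not resolve this either. The paper's $Q$ has top-left block $A$ and is asserted to be symmetric ``by construction,'' which holds only if $A=A^\top$; the hypothesis that $[A\,\,B]$ realizes $\mr(P)$ imposes no such symmetry, since a realization of the pattern $P$ need only be combinatorially symmetric in its probe--probe block. So what you flagged as the crux is an implicit assumption shared by both arguments (equivalently, both proofs establish the theorem for realizations whose probe--probe block is symmetric), not a defect peculiar to yours. Granting that assumption, your proof is complete and correct, and is arguably the cleaner of the two.
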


\begin{proof}
The first inequality follows from the fact that $P$ is a sub-pattern of $G^{\N}$. 
Therefore, we will focus our attention on the second inequality.
Since every column of $B_A$ is in the column space of $A$, a matrix $C$ exists such that $B_A = AC$. 
Choose $R$ to be a $\rank(B_{A^\perp})\times k$ matrix as in the proof of Lemma~\ref{lem.rowspace}.
Since each row in $B_{A^\perp}$ is in the row space of $R^\top R$, there exists a matrix $D$ such that $DR^\top R=B_{A^\perp}$

Consider the matrix
\[Q=\begin{bmatrix}
A& AC+DR^\top R\\
C^\top A + R^\top R D^\top  & R^\top R - C^\top A C + C^\top R^\top R D^\top+DR^\top RC
\end{bmatrix}.\]
Observe that $Q\in\cals(G^{\N})$ and that $Q$ is symmetric by construction. 

Let \[T=\begin{bmatrix}I&-C\\O&I\end{bmatrix}\quad \text{and} \quad S =  \begin{bmatrix}I&-D\\O&I\end{bmatrix}.\]Now
\begin{align*}
ST^\top Q TS^\top &= S\begin{bmatrix}A & B_{A^\perp}\\ B_{A^\perp}^\top &R^\top R\end{bmatrix}S^\top= \begin{bmatrix} A-DR^\top RD^\top & O\\O & R^\top R\end{bmatrix}.
\end{align*}
Observe that $\rank(A-DR^\top RD^\top)\leq \rank(A)+ \rank(DR^\top RD^\top)$. 
Since \[\rank(DR^\top RD^\top)\leq \rank(R^\top R)\leq \rank(B_{A^\perp}),\] it follows from star similarity that  \[\rank(Q)\leq \rank(A)+2\rank(B_{A^\perp}),\] proving the theorem. 
\end{proof}

Observe that the upper bound in  Theorem~\ref{thm.tophalf} is tight when $G$ is a complete bipartite graph and $\N$ is one of the bipartite parts of $G$.
Furthermore, the lower bound in Theorem~\ref{thm.tophalf} is tight when $G=H\cup \overline{K_{\N}}$ for some graph $H$ and where $\N$ is the set of vertices in $\overline{K_{\N}}$.

\section{The probe graph zero forcing number}\label{sec:ZF}

We now turn our attention to the zero forcing number of a probe graph. We start by defining the parameter and proving some general properties. Then, we conclude by considering probe graphs $G^{\N}$ where $G$ is a graph of two parallel paths.
 
We need to define a zero forcing parameter for probe graphs that bounds the maximum nullity of all graphs in $G^{\N}$.
In other words, we want to define a zero forcing variant $\Z(G^{\N})$ such that 
\[\Z(G^{\N}) \geq \max_{H\in G^{\N}}\{\Z(H)\}\geq \max_{H\in G^{\N}}\{\M(H)\}= \M(G^{\N}).\]

To this end, we define a new color change rule. Let $G^{\N}$ be a probe graph with probe vertices $P$ and non-probe vertices $\N$. For some set of currently blue vertices $B\subseteq V(G^{\N})$, we define the \emph{probe graph color change rule} as follows:

\begin{enumerate}
\item If $v\in B\cap P$ and $v$ has exactly one white neighbor $w$, then $v$ forces $w$ to become blue. 
\item If $\N\subseteq B$, and $u\in \N$ has exactly one white neighbor $w$, then $u$ forces $w$ to become blue.
\end{enumerate}

A set of initially blue vertices $B\subseteq V(G^{\N})$ is a \emph{probe graph zero forcing set} if the entire probe graph is eventually colored blue after repeated applications of the probe graph color change rule. 
The \emph{zero forcing number of a probe graph}, $\Z(G^{\N})$, is the minimum cardinally of a probe graph zero forcing set.
Note that by definition, $\Z(G^{\N})$ is equal to the (standard) zero forcing number of $G$ with edges added between all vertices in $\N$. Another way of conceptualizing $\Z(G^{\N})$ is that a forcing set on $G^{\N}$ is a zero forcing set for all $H\in G^{\N}$, since Rule 2 always assumes the least convenient configuration of edges in $\N$. This leads to the following immediate observation.

\begin{Observation}\label{zeroiszerogam1}
If $|\N| \leq 1$, then $\Z(G) = \Z(G^{\N})$ for any graph $G$. 
\end{Observation}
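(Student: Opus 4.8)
The plan is to show that under the hypothesis $|\N| \le 1$ the probe graph color change rule coincides with the standard zero forcing color change rule on $G$, from which the equality of forcing numbers is immediate. I would split into the two cases $|\N| = 0$ and $|\N| = 1$.

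When $|\N| = 0$, every vertex is a probe, so $P = V(G)$ and Rule~2 is vacuous (there is no $u \in \N$ to apply it to). Rule~1 applied to all of $P = V(G)$ is precisely the statement of the standard color change rule. Hence the families of probe graph zero forcing sets and ordinary zero forcing sets of $G$ coincide, giving $\Z(G^{\N}) = \Z(G)$. When $|\N| = 1$, write $\N = \{u\}$. Rule~1 again reproduces the standard rule for every probe vertex, so the only thing to check is that Rule~2 contributes nothing beyond what the standard rule already permits at $u$. The precondition $\N \subseteq B$ in Rule~2 simply says $u \in B$, i.e.\ that $u$ is blue; but a vertex must be blue in order to perform any force, so this condition is automatically satisfied whenever we would ask $u$ to force under the standard rule. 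Thus Rule~2 with $|\N| = 1$ is exactly the standard color change rule applied to $u$, and the two processes agree.

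Since the color change rules agree in both cases, a set is a probe graph zero forcing set for $G^{\N}$ if and only if it is a zero forcing set for $G$, and minimizing over such sets yields $\Z(G^{\N}) = \Z(G)$. Alternatively, one could appeal directly to the remark preceding the statement, namely that $\Z(G^{\N})$ equals the standard zero forcing number of the graph obtained from $G$ by adding all edges among the vertices of $\N$: when $|\N| \le 1$ there are no pairs of non-probe vertices, so no edges are added and this graph is $G$ itself, giving $\Z(G^{\N}) = \Z(G)$ at once. I do not expect any genuine obstacle here; the only point requiring care is the observation that the side condition $\N \subseteq B$ in Rule~2 is redundant in the $|\N| = 1$ case because a vertex is required to be blue before it can force.
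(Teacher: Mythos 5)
Your proof is correct and matches the paper's reasoning: the paper treats this as immediate from the preceding remark that $\Z(G^{\N})$ equals the standard zero forcing number of $G$ with a clique added on $\N$, and since $|\N|\leq 1$ means no edges get added, this is exactly the closing alternative argument you give. Your case-by-case verification that the probe and standard color change rules coincide when $|\N|\leq 1$ is just a more explicit spelling-out of the same fact, including the correct observation that the precondition $\N\subseteq B$ in Rule~2 is redundant when $|\N|=1$ because a vertex must be blue to force.
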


Utilizing the relationship between $\Z(G^{\N})$ and $\M(G^{\N})$, we obtain the following bound on the size of $\N$ in terms of $\Z(G^{\N})$.

\begin{Proposition}\label{upboundongamma}
If $G^{\N}$ is a probe graph such that $k \geq  \Z(G^{\N})$, then $|\N| \leq k$.
\end{Proposition}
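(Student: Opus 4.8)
The plan is to chain together the two bounds already established: the lower bound $\M(G^{\N}) \geq |\N|$ from \Cref{prop.gammabound} and the upper bound $\M(G^{\N}) \leq \Z(G^{\N})$ that the probe graph color change rule was designed to guarantee. Together these give $|\N| \leq \Z(G^{\N})$, and the hypothesis $k \geq \Z(G^{\N})$ then immediately yields $|\N| \leq k$. In other words, the real content of the statement is simply the reformulation $|\N| \leq \Z(G^{\N})$, phrased contrapositively in terms of an arbitrary threshold $k$.

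First I would invoke \Cref{prop.gammabound}, which supplies $\M(G^{\N}) \geq |\N|$ and is the only place the matrix structure of $\cals(G^{\N})$ is used. Second, I would record the inequality $\M(G^{\N}) \leq \Z(G^{\N})$, reading it off the displayed chain
\[\Z(G^{\N}) \geq \max_{H\in G^{\N}}\{\Z(H)\}\geq \max_{H\in G^{\N}}\{\M(H)\}= \M(G^{\N}),\]
whose first inequality is the observation that a probe graph zero forcing set forces every $H \in G^{\N}$ (and is therefore a zero forcing set for each such $H$), whose middle inequality is the standard bound $\M(H) \leq \Z(H)$ from \cite{work2008zero}, and whose final equality is the identity $\M(G^{\N}) = \max_{H \in G^{\N}} \M(H)$ coming from $\cals(G^{\N}) = \bigcup_{H \in G^{\N}} \cals(H)$. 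Concatenating the two facts gives $|\N| \leq \M(G^{\N}) \leq \Z(G^{\N})$, and under $k \geq \Z(G^{\N})$ this reads $|\N| \leq \Z(G^{\N}) \leq k$, which is exactly the claim.

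There is essentially no obstacle here, since both constituent inequalities are already in hand. The only point deserving care is ensuring that the middle inequality $\M(G^{\N}) \leq \Z(G^{\N})$ is genuinely justified rather than merely asserted: one must confirm that the probe graph color change rule certifies zero forcing for every supergraph $H \in G^{\N}$ simultaneously, which is precisely the role of Rule~2 assuming the least convenient (clique) configuration of edges on $\N$. Once that interpretation is fixed, the proposition follows in one line.
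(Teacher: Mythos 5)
Your proposal is correct and matches the paper's own proof, which likewise concludes immediately from the chain $k \geq \Z(G^{\N}) \geq \M(G^{\N}) \geq |\N|$ using Proposition~\ref{prop.gammabound} and the bound $\M(G^{\N}) \leq \Z(G^{\N})$. Your additional justification of the middle inequality via the displayed chain is sound but not a different argument, only a more explicit unpacking of what the paper takes as established.
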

\begin{proof}
Follows immediately from Proposition~\ref{prop.gammabound} when $k \geq \Z(G^{\N}) \geq \M(G^{\N}) \geq |\N|$. 
\end{proof}

We now turn our attention to an example which demonstrates that $\Z(G^{\N})=\M(G^{\N})=\Z(G)=M(G)$ is achievable.

\begin{center}
 \begin{figure}[h]
 \begin{center}
 \includegraphics[scale=.5]{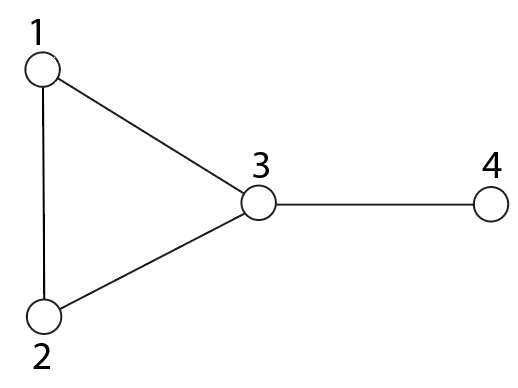}
 \end{center}
 \caption{The Paw Graph \label{fig:paw}}
 \end{figure}
 \end{center}

\begin{Example}
For the paw graph depicted in Figure~\ref{fig:paw}, we have  $\Z(\text{Paw}^{\N}) =\Z(\text{Paw})= \M(\text{Paw}^{\N})=\M(\text{Paw}) = 2$.
It was shown in \cite{MultMinor} that $\M(\text{Paw}) = 2$. Note that $\Z(\text{Paw}^{\N})\geq \M(\text{Paw}^{\N})\geq \M(\text{Paw})=2$ and $\Z(\text{Paw}^{\N})\geq \Z(\text{Paw})\geq \M(\text{Paw})= 2$. Thus, to complete the result, we demonstrate a probe graph zero forcing set of size two for the probe graph $\text{Paw}^{\N}$. Since the independence number of the Paw graph is 2, $|\N|\leq 2$. If $|\N|=1$, the set $\{1,2\}$ is a zero forcing set and shows $\Z(\text{Paw}^{\N}) =\Z(\text{Paw})=2$. If $|\N|=2$, it must be that $\N = \{1,4\}$ or $\N = \{2,4\}$. In this case, $\{2,3\}$ or $\{1,3\}$, (respectively) is a probe graph zero forcing set.
\end{Example}

To present the next result, we first require some additional definitions.
Given a probe graph zero forcing set $B$, a set of forces exists such that every vertex in $G^{\N}$ is eventually colored blue. 
Then $F_1,\dots,F_{|B|}$ is a set of {\em forcing chains} for $G^{\N}$ if each $F_i$ contains an ordered lists of vertices $v_1,\dots,v_{k}$ such that $v_j$ forces $v_{j+1}$.
Note that by definition, $B$ contains exactly the first vertex in each forcing chain. The {\em reversal} of $B$ with respect to a set of forcing chains $F_1,\dots, F_{|B|}$ is the set of vertices that are the last vertex in each forcing chain. 
Notice that a blue set $B$ may have many forcing chains and, therefore, many reversals. 
The analogous definitions for graphs were presented in \cite{BBFHHSVV10}, and the authors show that every reversal of a zero forcing set is also a zero forcing set. 
It is clear that the same result holds for the reversal of a probe graph zero forcing set.
Now, we present a necessary condition for a probe graph $G^{\N}$ to have a minimum zero forcing number.

\begin{Theorem}\label{thm:vertexcut}
If $\Z(G^{\N}) =|\N|$, then $\N$ is a vertex cut of $G$ or $\N$ is the only reversal for every probe graph forcing set which does not contain $\N$.
\end{Theorem}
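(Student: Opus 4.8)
The plan is to reduce the second alternative to a statement about which vertices carry out forces, and then to prove the contrapositive by extracting a separation of $G$. Since $\Z(G^{\N})=|\N|$, every minimum probe forcing set has exactly $|\N|$ forcing chains, and a reversal, having the same cardinality as its forcing set, can equal $\N$ only when that forcing set is itself minimum; so throughout we consider minimum probe forcing sets $B$ with $\N\not\subseteq B$. I would first record the key structural fact that each chain contains exactly one vertex of $\N$. Indeed, by Rule~2 a non-probe vertex forces only after all of $\N$ is blue, so within a single chain a non-probe vertex cannot be followed later by a still-white non-probe vertex; hence each chain meets $\N$ at most once, and since there are $|\N|$ chains and $|\N|$ non-probe vertices, each chain meets $\N$ exactly once. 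Writing $u_i$ for the unique non-probe vertex of the chain $F_i$, the reversal (the set of chain ends) equals $\N$ exactly when every $u_i$ is the last vertex of $F_i$, i.e.\ when no vertex of $\N$ performs a force. Thus the second alternative is equivalent to the assertion that, for every such $B$ and every choice of chains, no vertex of $\N$ forces.

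I would then prove the contrapositive. Suppose some minimum probe forcing set $B$ with $\N\not\subseteq B$ admits a choice of chains in which some $u_i\in\N$ forces; I will show $\N$ is a vertex cut of $G$. Split each chain at its unique non-probe vertex $u_i$ into a \emph{head} $H_i$, the probe vertices preceding $u_i$, and a \emph{tail} $T_i$, the probe vertices following $u_i$, and put $H=\bigcup_i H_i$ and $T=\bigcup_i T_i$, so that $V(G)=H\sqcup\N\sqcup T$ with $H,T\subseteq P$. Then $T\ne\emptyset$ because the forcing vertex $u_i$ has a nonempty tail, and $H\ne\emptyset$ because $\N\not\subseteq B$ makes some $u_j$ a non-start of its chain, whose predecessor is necessarily a probe vertex (a non-probe vertex can be forced only by a probe vertex).

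The crux is to show that $G$ has no edge between $H$ and $T$. Let $t_0$ be the first time all of $\N$ is blue. Every head vertex is colored, and carries out its force, no later than the time at which its chain's non-probe vertex is colored, hence by time $t_0$; every tail vertex is colored strictly after $t_0$, since a tail can begin only when its non-probe vertex forces under Rule~2, which requires all of $\N$ to be blue already. Consequently, at the instant a head vertex $x$ performs its force, every tail vertex is still white. If $x\in H$ were adjacent to some $y\in T$, then $y$ would be a white neighbor of $x$ at that instant, necessarily distinct from the unique vertex that $x$ actually forces (which lies in $H\cup\N$); this contradicts the requirement that a forcing vertex has exactly one white neighbor. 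Hence there is no edge between $H$ and $T$, so in $G-\N$ the nonempty sets $H$ and $T$ lie in different components, and $\N$ is a vertex cut of $G$.

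The main obstacle is this no-edge step, and in particular making the timing rigorous independently of whether the color change rule is applied one force at a time or in simultaneous rounds. The decisive point is that Rule~2 becomes available only once \emph{all} of $\N$ is blue, so no tail vertex can be colored until after $t_0$, which keeps the head-forces strictly before the tail-vertices appear; pinning this down, together with the boundary case in which a head vertex forces its non-probe vertex exactly at time $t_0$, is where the argument must be handled with care.
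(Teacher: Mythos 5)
Your proof is correct and takes essentially the same approach as the paper's: your head/tail decomposition of each chain around its unique non-probe vertex is exactly the paper's partial order of vertices below/above $\N$, and your timing argument (head vertices force no later than the moment all of $\N$ becomes blue, tail vertices are colored only strictly afterward, so a head--tail edge would give a forcing probe vertex two white neighbors) is precisely the paper's key claim. The only differences are in level of detail: you make explicit several steps the paper merely asserts (each chain meets $\N$ exactly once, nonemptiness of both sides, and the reduction of ``reversal $=\N$'' to ``no vertex of $\N$ forces'').
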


\begin{proof}
Let $B$ be a minimum probe graph forcing set of $G$ which does not contain $\N$, and let $F_1,\dots, F_{|\N|}$ be a set of forcing chains of $B$. 
Notice that the forcing chains partition $V(G)$, each containing exactly one vertex from $\N$. 
Let $g_i\in \N$ be the vertex in $\N$ contained in $F_i$. 
Let $\leq$ be the partial order on $V(G)$ such that $a\leq b$ if and only if $a,b$ are in the same forcing chain and $a$ does not appear strictly after $b$ (i.e. use the linear orders from all $F_i$ to create a partial order on $V(G)$). 

\textbf{Claim:} If $xy\in E(G)$ with $x\in F_i$ and $y\in F_j$ (where $i,j$ may or may not be distinct), then either $x\leq g_i$ and $y< g_j$, or $x\geq g_i$ and $y> g_j$. 

Without loss of generality, suppose $x\leq g_i$ and $y\geq g_j$.
Since $\N$ is an independent set and our graph is loopless, at least one of the two relations must be strict.
Furthermore, if $y\geq g_j$, we know that $y$ will not be blue until every vertex in $\N$ is blue. 
Therefore, $x<g_i$ and $y>g_j$ is impossible.
In particular, if $x<g_i$, then $y=g_j$ and we are done. 
Similarly, if $y>g_j$, then $x=g_i$ and we are done. 
Thus, the claim is proven. 

Notice that an immediate consequence of the claim is that no edge passes from the vertex set under $\N$ (in the partial order) to the vertex set above $\N$. 
Since $\N$ is not the reversal of the $F_i$s, there are vertices on both sides of $\N$ in the partial order.
Therefore, $\N$ is a cut set.
\end{proof}

Notice that there is no hope for finding a converse of Theorem~\ref{thm:vertexcut}. 
The zero forcing sets for the graph $2K_n\vee \overline{K_2}$ with $\N=\overline{K_2}$ must contain at least $n+1$ vertices. 
Thus, even though $\N$ is a vertex cut, the zero forcing number is much larger than $2$.

We now turn our attention to graphs of two parallel paths.
A graph is a \emph{graph of two parallel paths} if there exist two independent induced sub-paths $v_1\dots v_k$ and $u_1\dots u_\ell$ such that $k+\ell=n$ and if $v_iu_j\in E(G)$ then $v_xu_y$ is not an edge when $x>i, y<j$ or $x<i, y>j$.
Note that we define a graph of parallel paths to include the case when $G=P_n$. 
Furthermore, this is an alternative definition to the one given originally in \cite{johnson2009graphs}, which relies on an embedding of $G$ into the plane.
From either definition, it is relatively immediate to observe that a graph of two parallel paths is outer-planar.
Our study of this graph family is motivated by the following result:

\begin{Theorem}[Theorem 2.3 in \cite{row2012technique}] \label{Zforce2}
For a graph $G$, $\Z(G) = 2$ if and only if $G$ is a graph of two parallel paths but $G\neq P_n$.
\end{Theorem}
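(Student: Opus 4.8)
The plan is to prove both directions of the biconditional, relying throughout on the standard fact that $\Z(G)=1$ if and only if $G=P_n$. This already accounts for the clause $G\neq P_n$, so in each direction it remains only to compare $\Z(G)$ with $2$ under the assumption that $G$ is not a single path. For the structural bookkeeping I would invoke the theory of forcing chains from \cite{BBFHHSVV10}: a zero forcing set of size $p$ induces $p$ forcing chains that partition $V(G)$, each chain being a path whose consecutive vertices are adjacent, with the initial blue vertices being exactly the first vertices of the chains.

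For the reverse direction, suppose $G$ is a graph of two parallel paths $v_1\cdots v_k$ and $u_1\cdots u_\ell$ with $G\neq P_n$, so $\Z(G)\geq 2$. I would show $\{v_1,u_1\}$ is a zero forcing set by induction, maintaining the invariant that the blue set is always a pair of initial segments $\{v_1,\dots,v_i\}\cup\{u_1,\dots,u_j\}$. The key claim is that whenever white vertices remain, at least one of the frontier vertices $v_i,u_j$ has a unique white neighbor, and so can force. Indeed, if neither could force, then $v_i$ would have a white cross-neighbor $u_y$ with $y>j$ and $u_j$ a white cross-neighbor $v_x$ with $x>i$; but then the edges $v_iu_y$ (with $y>j$) and $v_xu_j$ (with $x>i$) are both present, and since $j<y$ these violate the non-crossing condition. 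The boundary cases, where one path is already exhausted, are handled directly because an exhausted path contributes no white cross-neighbors to the other frontier. Driving this induction to completion colors all of $V(G)$, giving $\Z(G)\leq 2$ and hence $\Z(G)=2$.

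For the forward direction, suppose $\Z(G)=2$ and fix a minimum zero forcing set, yielding two forcing chains $F_1=v_1\cdots v_k$ and $F_2=u_1\cdots u_\ell$ partitioning $V(G)$. First I would argue each chain is an \emph{induced} path: a chord $v_iv_j$ with $j>i+1$ would leave $v_j$ white when $v_i$ forces $v_{i+1}$, giving $v_i$ two white neighbors, a contradiction. To verify the non-crossing condition, let $\tau(w)$ be the step at which $w$ becomes blue; then $\tau$ is strictly increasing along each chain, and whenever $w$ forces $w'$, every neighbor of $w$ other than $w'$ is already blue, so $\tau(\cdot)<\tau(w')$ for each such neighbor. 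Suppose for contradiction the condition fails, so after relabeling there are edges $v_iu_j$ and $v_xu_y$ with $x>i$ and $y<j$ (the interior indices guarantee that $u_{y+1}$ and $v_{i+1}$ exist). Reading off the forcing constraints gives $\tau(v_x)<\tau(u_{y+1})$ (from $u_y\to u_{y+1}$, whose neighbor $v_x$ must be blue first) and $\tau(u_j)<\tau(v_{i+1})$ (from $v_i\to v_{i+1}$, whose neighbor $u_j$ must be blue first), while chain monotonicity gives $\tau(u_{y+1})\leq\tau(u_j)$ and $\tau(v_{i+1})\leq\tau(v_x)$. Concatenating, $\tau(v_x)<\tau(u_{y+1})\leq\tau(u_j)<\tau(v_{i+1})\leq\tau(v_x)$, a contradiction. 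Hence $G$ is a graph of two parallel paths, and $G\neq P_n$ since $\Z(G)=2$.

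I expect the crossing-to-contradiction timing argument in the forward direction to be the main obstacle: one must correctly extract the ``blue-before-forcing'' inequalities from each of the two crossing edges and pair them with chain monotonicity so that they close into a cycle of strict inequalities. Care is also needed in the boundary cases of the reverse-direction induction, but these become routine once the non-crossing claim at the frontier is in hand.
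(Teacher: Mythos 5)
This statement is not proved in the paper at all: it is imported verbatim as Theorem~2.3 of the cited work \cite{row2012technique}, so there is no internal proof to compare against. Your proposal is a correct, self-contained proof of the cited result, and it follows the standard forcing-chain route. Both directions check out: in the reverse direction, the frontier invariant is sound, since whenever neither frontier vertex $v_i$ nor $u_j$ can force (with both paths unexhausted), the two witnessing edges $v_iu_y$ ($y>j$) and $v_xu_j$ ($x>i$) directly violate the paper's index-based non-crossing condition, and the exhausted-path cases are trivial; in the forward direction, the chord argument correctly shows each chain is induced, and the cycle of inequalities $\tau(v_x)<\tau(u_{y+1})\leq\tau(u_j)<\tau(v_{i+1})\leq\tau(v_x)$ legitimately closes because $y<j$ and $i<x$ guarantee that $u_y$ and $v_i$ each perform a force and that $u_{y+1}$, $v_{i+1}$ exist. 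The only point worth making explicit is the standard fact you invoke from \cite{BBFHHSVV10}, that a size-$2$ zero forcing set yields two vertex-disjoint chains partitioning $V(G)$ whose consecutive vertices are adjacent, and the fact that $\Z(G)=1$ if and only if $G=P_n$; both are well established, so your argument stands as a faithful reconstruction of the kind of proof given in the cited source.
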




First, we consider the case of paths and cycles, showing that the probe graph zero forcing number is $|\N|$.

\begin{Proposition}\label{thm:PCequalsGamma}
If $|\N| \geq 2$, then $\Z(P_n^{\N})=\M(P_n^{\N}) = |\N|$
and $\Z(C_n^{\N})=\M(C_n^{\N}) = |\N|$.  
\end{Proposition}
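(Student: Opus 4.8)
The plan is to combine the inequalities already available with a single explicit construction. Since $\Z(G^{\N}) \ge \M(G^{\N})$ (the displayed chain at the start of this section) and $\M(G^{\N}) \ge |\N|$ by Proposition~\ref{prop.gammabound}, for $G = P_n$ or $G = C_n$ we have
\[
\Z(G^{\N}) \ge \M(G^{\N}) \ge |\N| = k,
\]
writing $k := |\N| \ge 2$. Hence it is enough to produce a probe graph zero forcing set of size $k$: the squeeze then forces $\Z(G^{\N}) = \M(G^{\N}) = |\N|$ all at once, giving both equalities simultaneously. I would label the vertices $1, \dots, n$ along the path (respectively around the cycle), write the non-probes as $a_1 < \dots < a_k$, and record the consequences of independence: $a_{i+1} \ge a_i + 2$ for every $i$, and in the cyclic case also that $a_1$ and $a_k$ are non-adjacent. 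Throughout, the governing feature is that, by Rule~2, a blue non-probe is powerless until \emph{every} vertex of $\N$ is blue, so forcing must proceed in stages driven by probes.

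For the path I would take $B = \{1\} \cup \{a_i + 1 : 1 \le i \le k-1\}$, which has exactly $k$ elements (the $a_i+1$ are distinct probes lying strictly between consecutive non-probes). Forcing then cascades: starting from vertex $1$, Rule~1 colors the block $1, \dots, a_1$ and stalls at the non-probe $a_1$; the seed $a_1+1$ now has its unique white neighbor at $a_1+2$ and colors $a_1+1, \dots, a_2$, stalling at $a_2$; in general the seed $a_i+1$ activates once $a_i$ is blue and carries the chain up to $a_{i+1}$. Once $a_{k-1}+1$ colors through $a_k$, all non-probes are blue, so Rule~2 releases the non-probe $a_k$ to force the tail $a_k+1, \dots, n$. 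I would then check that these chains partition $V(P_n)$ and that each force is legal in the order described.

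For the cycle the construction is the parallel one, $B = \{a_1\} \cup \{a_i+1 : 1 \le i \le k-1\}$, again of size $k$, and the same staged cascade colors the forward arc $a_1, a_1+1, \dots, a_k$ and turns every non-probe blue; Rule~2 then lets $a_k$ force around the complementary arc $a_k+1, \dots, a_1-1$ (indices mod $n$) back to $a_1$. I expect the main obstacle to be exactly this wrap-around step: I must verify that the complementary arc contains no non-probe (true since all non-probes are among $a_1, \dots, a_k$), so that ordinary Rule~1 forcing carries the chain the whole way, and that the arc is nonempty (true because $a_1, a_k$ are non-adjacent), so the process terminates with the entire cycle blue. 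The remaining care goes into the boundary cases $a_1 = 1$ and $a_k = n$ (which cannot occur together) and a gap $a_{i+1} - a_i = 2$ in which a seed forces a non-probe directly; in each I would confirm that the ``all non-probes blue'' trigger of Rule~2 and the ordering of forces still behave as claimed.
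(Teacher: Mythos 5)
Your proposal is correct and takes essentially the same approach as the paper: both prove the lower bound via $\Z(G^{\N}) \geq \M(G^{\N}) \geq |\N|$ (Proposition~\ref{prop.gammabound}) and then exhibit an explicit probe graph zero forcing set of size $|\N|$, verified by a staged cascade that stalls at each non-probe and restarts from a seed, with Rule~2 finishing the tail (resp.\ the complementary arc). The only difference is cosmetic: your seeds are the probe successors $a_i+1$ of the non-probes (so Rule~2 is invoked just once, at the end), whereas the paper seeds with $\{v_1\}\cup(\N\setminus\{v_i\})$ for the path and $(\N\setminus\{v_i,v_j\})\cup\{v_1,v_n\}$ for the cycle, invoking Rule~2 at each non-probe.
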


\begin{proof}
We will first prove $\Z(P_n^{\N})\leq |\N|$ and $\Z(C_n^{\N})\leq |\N|$ by exhibiting zero forcing sets of size $\N$.

Consider $P_n=v_1v_2\dots v_n$ first. 
If $i$ is the smallest index such that $v_i\in \N$, set $B=\{v_1\}\cup(\N\setminus \{v_i\})$. Observe that $B$ is a probe graph zero forcing set for $P_n$.

Now consider $C_n= v_1v_2\dots v_n v_1$.
Let $i$ be the smallest index with $v_i\in \N$ and let $j$ be the largest index with $v_j\in \N$. Set $B=(\N\setminus \{v_i,v_j\})\cup \{v_1,v_n\}$ and observe that $B$ is a probe graph zero forcing set for $C_n$.

Finally, we prove the lower bound. By Proposition~\ref{prop.gammabound}
\[ \M(P_n^{\N}) \geq |\N| \,\,\, \text{and} \,\,\, \M(C_n^{\N}) \geq  |\N|.\]
Since $\Z(G^{\N})\geq \M(G^{\N})$, this completes the proof.\end{proof}

We conclude this section by examining the more general case of graphs on two parallel paths. Motivated by Theorem~\ref{Zforce2}, we consider probe graphs for which $\Z(G^{\N})=2$ and show that for such graphs, $G$ must be a graph of parallel paths which is not a path graph when $|\N|\leq 1$. Note that the converse of this result will be addressed in Section~\ref{sec:extreme}. First, we require the following result about $\mr(G)$ for path graphs.

\begin{Theorem}[Theorem 2.8 in \cite{fiedler1969characterization}]
\label{PathMaxnull1}
Let $G$ be a graph. Then $\mr(G) = |G|-1$ (hence $\M(G) = 1$) if and only if $G = P_{|G|}$.
\end{Theorem}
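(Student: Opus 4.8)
The plan is to prove the two directions separately, handling the reverse (hard) direction by contraposition.

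For the forward direction, suppose $G=P_n$. After ordering the vertices along the path, every matrix in $\cals(P_n)$ is an irreducible tridiagonal matrix, i.e.\ its sub- and super-diagonal entries are all nonzero. I would exhibit a nonsingular $(n-1)\times(n-1)$ submatrix directly: deleting the first row and the last column leaves a triangular matrix whose diagonal consists exactly of the (nonzero) off-diagonal entries of the path, so its determinant is nonzero. Hence every $A\in\cals(P_n)$ has $\rank(A)\ge n-1$. Combined with the universal bound $\mr(H)\le |H|-1$, which holds for every graph because subtracting an eigenvalue from the diagonal yields a singular matrix in $\cals(H)$ without changing its pattern, this gives $\mr(P_n)=n-1$, equivalently $\M(P_n)=1$. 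Alternatively, one can note that $\Z(P_n)=1$ and invoke $\M(P_n)\le\Z(P_n)=1$.

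For the reverse direction I would argue the contrapositive: if $G\ne P_n$, then $\M(G)\ge 2$. First, if $G$ is disconnected then $\M(G)=\sum_i\M(G_i)$ over its components, and since every component contributes at least $1$, having two or more components forces $\M(G)\ge 2$; so assume $G$ is connected. A connected graph in which every vertex has degree at most $2$ is a path or a cycle, so if $G$ is connected, not a path, and $\Delta(G)\le 2$, then $G=C_n$. In that case the adjacency matrix of $C_n$ lies in $\cals(C_n)$, and its eigenvalues $2\cos(2\pi k/n)$ coincide for $k$ and $n-k$, producing an eigenvalue of multiplicity $2$; hence $\M(C_n)\ge 2$.

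The remaining, and main, case is when $G$ is connected with a vertex $v$ of degree at least $3$, where I would produce a matrix $A\in\cals(G)$ of nullity at least $2$. The prototype is the star $K_{1,3}$, whose adjacency matrix has three identical rows and therefore rank $2$ and nullity $2$; the aim is to realize the analogous rank deficiency inside $G$. For trees this is cleanest via the path cover number: a tree $T$ satisfies $\M(T)=P(T)$, and a tree with a vertex of degree at least $3$ cannot be covered by a single induced path, so $P(T)\ge 2$ and $\M(T)\ge 2$. The genuine obstacle is the general case, where every edge of $G$ forces a nonzero matrix entry, so one cannot simply imitate the star by zeroing out interfering edges; instead the construction must choose the edge weights and diagonal entries so as to preserve two independent null vectors. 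I expect this step, exhibiting the two null vectors while respecting all forced-nonzero positions, to be the hard part, since the condition ``nullity at least $2$'' cuts out a set of codimension $3$ in the symmetric matrices and precisely exploits the three edges available at $v$.
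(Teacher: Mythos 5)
The paper never proves this statement; it is imported verbatim as Theorem~2.8 of \cite{fiedler1969characterization}, so your attempt has to stand entirely on its own. Most of it does: the forward direction (the upper-triangular $(n-1)\times(n-1)$ submatrix with nonzero diagonal, plus the shift-by-an-eigenvalue bound $\mr(H)\le |H|-1$) is the standard argument and is correct, and so are your reductions for disconnected graphs, for cycles, and for trees with a vertex of degree at least $3$ (granting the Johnson--Leal-Duarte identity $\M(T)=P(T)$, a far heavier theorem than the one being proved, but logically admissible). The genuine gap is exactly the case you name and then do not resolve: $G$ connected, containing a cycle, and having a vertex of degree at least $3$. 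The smallest such instance is the paw graph --- precisely the example this paper works out in Section~3 with $\M(\text{Paw})=2$ --- so this is not a fringe case; it is the bulk of the non-tree case. Your codimension-counting remark (nullity $\ge 2$ has codimension $3$, and $v$ supplies $3$ edges) is a heuristic about dimensions, not an argument: it does not produce a matrix in $\cals(G)$ of nullity $2$, and the difficulty is real because $\M$ is not monotone under taking subgraphs or spanning subgraphs (adding an edge can \emph{decrease} maximum nullity), so neither the $K_{1,3}$ prototype nor the tree case can be transferred to $G$ by any containment argument. Likewise $\M=P$ fails for graphs with cycles (e.g.\ $\M(K_4)=3$ while $P(K_4)=2$), so the path-cover route does not extend.

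To close the gap you need a genuinely new ingredient. Two standard options: (i) Fiedler's own inductive argument, which shows directly that an irreducible symmetric matrix whose rank cannot be pushed below $n-1$ by any diagonal perturbation must be permutation-similar to an irreducible tridiagonal matrix; or (ii) the minor-monotone machinery: the parameter $\xi$ of Barioli, Fallat, and Hogben satisfies $\xi(G)\le \M(G)$, is minor-monotone, and has $\xi(K_3)=\xi(K_{1,3})=2$, while every connected graph other than a path contains $K_3$ or $K_{1,3}$ as a minor (a cycle contracts to $K_3$; otherwise a degree-$3$ vertex gives $K_{1,3}$ as a subgraph). Either route finishes the proof, but neither is a routine completion of what you wrote; as it stands, the reverse direction is proved only for forests and cycles.
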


\begin{Proposition}
If $G$ is a graph on $n$ vertices such that $2 = \Z(G^{\N})$, then $2 = \Z(G^{\N})= \M(G^{\N})$ and $G$ is a graph of parallel paths which is not a path graph when $|\N|\leq 1$.
\end{Proposition}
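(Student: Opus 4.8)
The plan is to first pin down the size of $\N$ and then split into the regimes $|\N|\le 1$ and $|\N|=2$. Combining the bound $\M(G^{\N})\le \Z(G^{\N})$ with Proposition~\ref{prop.gammabound} gives $|\N|\le \M(G^{\N})\le \Z(G^{\N})=2$, so only $|\N|\in\{0,1,2\}$ can occur. At the outset I would also record the elementary observation that when $|\N|\le 1$ the extra symmetric block $B$ perturbs at most a single (already free) diagonal entry, so $\cals(G^{\N})=\cals(G)$; consequently $\M(G^{\N})=\M(G)$, and $\Z(G^{\N})=\Z(G)$ by Observation~\ref{zeroiszerogam1}. This reduces the $|\N|\le 1$ case to statements about the ordinary graph parameters of $G$.

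Next I would establish $\M(G^{\N})=2$. The upper bound $\M(G^{\N})\le \Z(G^{\N})=2$ is immediate. For the lower bound, if $|\N|=2$ then Proposition~\ref{prop.gammabound} gives $\M(G^{\N})\ge |\N|=2$ at once. If instead $|\N|\le 1$, then $\Z(G)=\Z(G^{\N})=2$, so Theorem~\ref{Zforce2} shows $G$ is a graph of two parallel paths with $G\ne P_n$; Theorem~\ref{PathMaxnull1} then forces $\M(G)\ne 1$, and since $\M(G)\le \Z(G)=2$ we conclude $\M(G)=2$, whence $\M(G^{\N})=\M(G)=2$.

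For the structural conclusion, the key is the monotonicity $\Z(G)\le \Z(G^{\N})$. This holds because every legal probe force is also a legal standard force in $G$: Rule~1 is already a standard force, while Rule~2 only imposes the extra precondition that all of $\N$ be blue, which can only delay forcing. Hence any probe graph zero forcing set is a standard zero forcing set of $G$, giving $\Z(G)\le \Z(G^{\N})=2$. If $\Z(G)=2$, then $G$ is a graph of two parallel paths by Theorem~\ref{Zforce2}. If $\Z(G)=1$, then $\M(G)\le 1$; since $\M(G)\ge 1$ for any graph on at least one vertex (translate any realizing matrix to be singular), we get $\M(G)=1$, and Theorem~\ref{PathMaxnull1} yields $G=P_n$, which is a graph of two parallel paths by our convention. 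In either case $G$ is a graph of two parallel paths. Finally, when $|\N|\le 1$ we have the sharper equality $\Z(G)=\Z(G^{\N})=2$, so Theorem~\ref{Zforce2} additionally gives $G\ne P_n$, i.e. $G$ is not a path, completing the statement.

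I expect the main obstacle to be organizational rather than computational: correctly isolating the two regimes and ensuring the $|\N|\le 1$ case is routed through the classical parameters, so that Theorem~\ref{Zforce2} and Theorem~\ref{PathMaxnull1} apply. The conceptual crux is recognizing that the parallel-paths structure in the case $|\N|=2$ follows from the monotonicity $\Z(G)\le \Z(G^{\N})$, and not from any attempt to delete the added edge between the two non-probe vertices from a parallel-paths supergraph; deletion need not preserve the family, whereas the monotonicity inequality delivers $\Z(G)\le 2$ directly.
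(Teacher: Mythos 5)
Your proposal is correct, and it reaches the structural conclusion in the $|\N|=2$ case by a genuinely different route than the paper. The paper passes to maximum nullity: from $\M(G)\leq \M(G^{\N})=2$ it invokes the minimum-rank characterization (Theorem~\ref{Maxnull2}) to conclude that $G$ is either a graph of two parallel paths, a path, or one of the special graphs of Figure~\ref{SpecialGraphswithM2labeled}, and then rules out the special graphs using $\Z(G^{\N})\geq \Z(G)\geq 3$. You instead prove and exploit the monotonicity $\Z(G)\leq \Z(G^{\N})$ up front --- correctly, since a Rule~1 force is verbatim a standard force in $G$, and a Rule~2 force (with all of $\N$ blue) is also a valid standard force in $G$ --- and then conclude directly: $\Z(G)=2$ gives parallel paths by Theorem~\ref{Zforce2}, while $\Z(G)=1$ forces $\M(G)=1$ and hence $G=P_n$ by Theorem~\ref{PathMaxnull1}, which counts as a graph of two parallel paths under the paper's convention. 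What your route buys is economy and self-containment: you never need the matrix-theoretic characterization of $\mr(G)=n-2$, and you supply a proof of the inequality $\Z(G)\leq \Z(G^{\N})$ that the paper asserts without justification when discarding the special graphs. What the paper's route buys is that the case analysis on $\M(G)$ ties the statement to the rank machinery that the rest of Section~\ref{sec:extreme} is built on. Your handling of $|\N|\leq 2$ and of the $|\N|\leq 1$ regime (via $\cals(G^{\N})=\cals(G)$ and Observation~\ref{zeroiszerogam1}) matches the paper, and is in fact slightly more explicit about why $\M(G)=2$ there.
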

\begin{proof}
If $2 \geq \Z(G^{\N})$, then by Corollary~\ref{upboundongamma}, $|\N| \leq 2$.

If $|\N| \leq 1$, then by Observation~\ref{zeroiszerogam1}, $2 = \Z(G^{\N}) = \Z(G) = \M(G) = \M(G^{\N})$ and we must conclude that $G$ is a graph of parallel paths which is not itself a path graph by Theorem~\ref{Zforce2}.

If $|\N| =2$, then $2 = \Z(G^{\N}) = \M(G^{\N})$ since $\Z(G^{\N}) \geq \M(G^{\N}) \geq |\N|$. 
Notice that $\M(G^{\N})\geq \M(G)$, so $2\geq \M(G)$. If $\M(G) = 1$, then by Theorem~\ref{PathMaxnull1}, $G = P_n$. If $\M(G) =2$, then by Theorem~\ref{Maxnull2}, $G$ is a graph of parallel paths or a special graph in Figure~\ref{SpecialGraphswithM2labeled}.
However, if $G$ is one of the graphs from Figure~\ref{SpecialGraphswithM2labeled}, then $\Z(G^{\N})  \geq \Z(G) \geq 3$ by Theorem~\ref{Zforce2}.
Thus, we conclude that $G$ is a graph of parallel paths. 
\end{proof}

\section{Structure of graphs of two parallel paths}\label{sec:Strucpaths}
We completely reserve this section for the study of graphs of parallel paths and their structures. These results will be required for our conclusions in Section~\ref{sec:extreme}. As we observed while defining graphs of two parallel paths, if $G$ is a graph of two parallel paths, then $G$ is outer-planar. 
Suppose a connected graph $G$ is embedded in $\mathbb R^2$ so that the embedding is outer-planar and let $R$ be the subset of all vertices incident with some bounded face. 
We call $R$ a {\em core} of $G$. 
Alternatively, $R$ can be defined as the set of vertices in an induced cycle. 

Our first foray into understanding this family better is considering graphs with an empty core. In the case of graphs of two parallel paths, this is equivalent to the graph being a tree.

\begin{Theorem}\label{thm:emptycore}
    A tree $G$ is a graph of two parallel paths if and only if all of the following are true:
    \begin{enumerate}
    \item[1.] no vertex of $G$ has degree $4$ or greater,
    \item[2.] $G$ has at most two vertices of degree $3$, and 
    \item[3.] vertices of degree $3$ are adjacent in $G$.
    \end{enumerate}
\end{Theorem}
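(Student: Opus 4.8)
The plan is to reduce the theorem to a single structural observation: a tree is a graph of two parallel paths precisely when it is either a path or consists of two vertex-disjoint paths joined by exactly one edge. The key is an edge count. Suppose $G$ is a tree on $n$ vertices that is a graph of two parallel paths, with the vertices partitioned into the induced subpaths $v_1\dots v_k$ and $u_1\dots u_\ell$, $k+\ell=n$. Because each listed subpath is induced, it contributes exactly $k-1$ (respectively $\ell-1$) edges, and every remaining edge of $G$ is a \emph{cross edge} $v_iu_j$. Since a tree has $n-1=k+\ell-1$ edges, the number of cross edges is $(k+\ell-1)-(k-1)-(\ell-1)=1$ whenever both paths are nonempty; if one path is empty then $G=P_n$ and there are no cross edges. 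Thus, up to the degenerate path case, there is exactly one cross edge $v_au_b$, and I would first record that the non-crossing condition in the definition is then satisfied vacuously.

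For the forward direction I would read the degree sequence off this structure. Every vertex other than $v_a$ and $u_b$ lies only on its own path, so it has degree $1$ (an endpoint) or $2$ (an interior vertex). The vertex $v_a$ has its path-degree increased by one from the cross edge, so $\deg(v_a)\in\{2,3\}$, with value $3$ exactly when $v_a$ is interior to its path; the same holds for $u_b$. This immediately gives conditions (1) and (2): the maximum degree is at most $3$, and the only possible degree-$3$ vertices are $v_a$ and $u_b$. For condition (3), observe that if both $v_a$ and $u_b$ have degree $3$, they are joined by the cross edge $v_au_b$, hence are adjacent.

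For the converse I would argue by the number of degree-$3$ vertices, using conditions (1)--(3) to produce an explicit decomposition with a single (automatically non-crossing) cross edge. If $G$ has no degree-$3$ vertex, then condition (1) forces maximum degree at most $2$, so $G$ is a path and is a graph of two parallel paths by convention. If $G$ has exactly one degree-$3$ vertex $w$, then every other vertex has degree at most $2$, so $G$ consists of three paths (legs) $L_1,L_2,L_3$ meeting only at $w$; I would take one parallel path to run $L_1\text{--}w\text{--}L_2$ (making $w$ an interior vertex) and the other to be $L_3$, with the single cross edge joining $w$ to the endpoint of $L_3$ adjacent to it. If $G$ has two degree-$3$ vertices $w_1,w_2$, then condition (3) guarantees the edge $w_1w_2$; deleting it splits the tree into components $T_1\ni w_1$ and $T_2\ni w_2$, and since $w_1,w_2$ were the only degree-$3$ vertices, each $T_i$ now has maximum degree $2$ and is therefore a path with $w_i$ interior. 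Taking $T_1,T_2$ as the two parallel paths and $w_1w_2$ as the single cross edge completes the decomposition.

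The main obstacle I anticipate is the two-degree-$3$-vertex case of the converse: it is exactly here that adjacency (condition (3)) is indispensable, since deleting $w_1w_2$ is what realizes both branch vertices simultaneously as interior vertices of two genuine paths. I would also take care with the degenerate boundary cases---one empty path, or a path of a single vertex so that $u_b$ is a leaf---to confirm that the edge-counting dichotomy (path versus two-paths-plus-one-edge) is exhaustive and that every decomposition produced indeed meets the induced and non-crossing requirements of the definition.
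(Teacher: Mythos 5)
Your proof is correct, and it takes a genuinely different route from the paper's. Your key device is the edge count: a tree on $n$ vertices has $n-1$ edges while the two induced paths account for $n-2$ of them, so a tree of two parallel paths is either a path or two vertex-disjoint induced paths joined by exactly one cross edge (which makes the non-crossing condition in the definition vacuous). From this intermediate characterization the forward direction follows by reading off degrees, and the converse by a case split on the number of degree-$3$ vertices: zero (a path), one (a spider whose center $w$ is absorbed into the path through two of its legs, the third leg serving as the other path), or two (delete the edge $w_1w_2$ guaranteed by condition 3, leaving two components of maximum degree $2$, each a path). The paper argues differently: it declares the forward direction ``straightforward to check,'' and for the converse it gives a uniform greedy construction --- grow a maximal path $P$ from a leaf, preferring vertices of degree $\neq 3$ whenever there is a choice, so that $P$ contains exactly one degree-$3$ vertex; conditions 1 and 2 then force $Q = G-P$ to be connected with exactly two leaves, hence an induced path, with exactly one edge joining $P$ to $Q$. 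Your approach buys an explicit proof of necessity (which the paper omits) and a reusable structural fact (a tree of two parallel paths has exactly one cross edge); the paper's buys a single construction with no case analysis. One point worth making explicit in your write-up: your edge count silently uses connectivity, and this is essential --- a forest consisting of three disjoint paths satisfies conditions 1--3 but is not a graph of two parallel paths --- so the hypothesis that $G$ is a tree, rather than merely acyclic, is doing real work precisely where you invoke $|E(G)| = n-1$.
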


\begin{proof}
     It is straightforward to check that if $G$ does not satisfy any one of the enumerated conditions, then $G$ is not a graph of two parallel paths. 
    The key observation is that each of these conditions is necessary if $G$ is to be partitioned into two induced paths.

    We will now show that if $G$ satisfies the enumerated conditions, then $G$ is, in fact, a graph of two parallel paths. 
    If $G$ is a path, then we are done. 
    Furthermore, if $G$ is not connected, then $G$ is the disjoint union of two paths, and we are done. 
    Therefore, we can assume that $G$ contains at least one vertex of degree $3$ and that $G$ is connected. 
    Consider the path $P=v_0\cdots v_r$ obtained by picking an arbitrary leaf $v_0$, and choosing each sequential vertex $v_i$ such that $v_i\in N(v_{i-1})\setminus \{v_{i-2}\}$. In the case that $N(v_{i-1})\setminus \{v_{i-2}\}$ contains two vertices, take care to select $v_i$ so that $\deg(v_i)\neq 3$. 
    This will ensure that $P$ contains at most one vertex whose degree is $3$ in $G$.
    Furthermore, $P$ must contain at least one vertex of degree $3$ because $P$ is a maximal path and $G$ is connected. 

    Notice that $G-P$ is connected because $G$ does not contain a vertex of degree $4$ or greater. 
    Furthermore, $G-P$ has exactly two leaves since any vertex of degree $3$ in $V(G)\setminus V(P)$ has a neighbor in $P$. 
    Therefore, $Q=G-P$ is an induced path of $G$. 
    Now, $P$ and $Q$ demonstrate that $G$ is a graph of two parallel paths since there is exactly one edge between $V(P)$ and $V(Q)$ in $G$, which renders the ordering (embedding) condition true.
\end{proof}

After this result, what is left to consider are graphs of parallel paths with non-empty cores. The following subsection addresses their structure. 
\subsection{Structure of non-empty cores}
The following results on the structure of parallel paths with non-empty cores rest heavily on the existence of a Hamiltonian cycle in $G[R]$. 

\begin{Proposition}\label{prop:hamiltoniancycle}
    Suppose $G$ is a graph of two parallel paths.
    If $R$ is not empty, then $G[R]$ contains a Hamiltonian cycle whose edges are the boundary of the unbounded face.
\end{Proposition}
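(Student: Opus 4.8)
The plan is to read the Hamiltonian cycle directly off the two-parallel-paths structure, using the non-crossing condition to organize the edges between the two paths into a monotone ``staircase.'' Write $P=v_1\cdots v_k$ and $Q=u_1\cdots u_\ell$ for the two induced paths and call an edge $v_iu_j$ a \emph{rung}. The defining condition says precisely that the rungs are monotone: if $v_iu_j$ and $v_xu_y$ are rungs with $x>i$, then $y\ge j$. First I would record the two extreme rungs. Let $a$ and $c$ be the smallest and largest indices $i$ for which $v_i$ meets a rung, and let $b$ and $d$ be the smallest and largest indices $j$ for which $u_j$ meets a rung. A short monotonicity argument then shows that $v_au_b$ and $v_cu_d$ are themselves rungs: if $v_a$ met only rungs $v_au_j$ with $j>b$, then pairing such a rung with a rung $v_{i'}u_b$ (which exists since $b$ is attained) would force $i'>a$ together with $b<j$, violating monotonicity; the endpoint $v_cu_d$ is symmetric.

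Next I would pin down the core exactly as the two contiguous segments cut out by the extreme rungs, namely
\[
R=\{v_a,v_{a+1},\dots,v_c\}\cup\{u_b,u_{b+1},\dots,u_d\}.
\]
For the inclusion ``$\subseteq$'' I would argue that vertices outside these ranges are pendant: since $a$ is the smallest rung-index on $P$, the subpath $v_1\cdots v_{a-1}$ is joined to the rest of $G$ only through $v_a$ and so lies on no bounded face, and the three other tails are handled identically. For the reverse inclusion I would use that consecutive rungs bound faces: listing the rungs in monotone order $v_{i_1}u_{j_1},\dots,v_{i_m}u_{j_m}$ with $a=i_1\le\cdots\le i_m=c$ and $b=j_1\le\cdots\le j_m=d$, each consecutive pair closes up with the intervening path segments $v_{i_t}\cdots v_{i_{t+1}}$ and $u_{j_t}\cdots u_{j_{t+1}}$ into a cycle, hence bounds a face, and every $v_i$ with $a\le i\le c$ and every $u_j$ with $b\le j\le d$ lies on one such face. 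This is the step I expect to be the main obstacle, since it requires care to confirm that these consecutive-rung cycles really are the bounded faces of the inherited embedding (no chord creates a smaller face that omits an interior path vertex, and degenerate cases such as two rungs sharing an endpoint still produce a genuine triangular bounded face).

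Finally, with $R$ identified, I would exhibit the cycle
\[
C\colon\quad v_a,v_{a+1},\dots,v_c,\ u_d,u_{d-1},\dots,u_b,\ v_a,
\]
whose edges are the path edges $v_iv_{i+1}$ for $a\le i<c$, the path edges $u_ju_{j+1}$ for $b\le j<d$, and the two extreme rungs $v_cu_d$ and $v_au_b$. All of these belong to $G[R]$, and $C$ visits each vertex of $R$ exactly once, so $C$ is a Hamiltonian cycle of $G[R]$. To match the statement I would observe that in the outer-planar embedding inherited from $G$ (with $P$ drawn along the top and $Q$ along the bottom), every bounded face produced above lies in the interior of $C$, while the tails and the point at infinity lie in its exterior; hence the edges of $C$ are exactly the boundary of the unbounded face, with any remaining rungs appearing as interior chords that do not interfere. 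As a byproduct, since $C$ is a spanning cycle of $G[R]$, the core is $2$-connected, which is the feature the later structural results can invoke.
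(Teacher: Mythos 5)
Your proof is correct, but it takes a genuinely different route from the paper's. The paper argues abstractly in two steps: first, $G[R]$ has no cut vertex (a cut vertex $x$ would separate two cycles $C_x$ and $C_y$, and since no cycle can lie inside a single induced path, both $P$ and $Q$ would need vertices on both sides of $x$, contradicting that $P\cap R$ and $Q\cap R$ partition $R$); second, in an outer-planar embedding of a graph without cut vertices, the boundary walk of the unbounded face cannot repeat a vertex, so it is a cycle through all of $V(G[R])$. Your proof instead constructs the cycle explicitly from the staircase structure of the rungs: you locate the extreme rungs $v_au_b$ and $v_cu_d$, identify $R$ as exactly the two intervals they cut out, and write down the Hamiltonian cycle edge by edge. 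What the paper's route buys is economy: it never has to identify $R$ or certify which cycles bound faces, so the step you flag as the main obstacle never arises; for core membership one only needs the general fact that, in an outer-planar embedding, a vertex is incident to a bounded face if and only if it lies on a cycle. What your route buys is more information: the explicit description of $R$ as two consecutive intervals joined by the extreme rungs essentially yields Lemma~\ref{lemma:partition} as a byproduct, and $2$-connectivity of the core comes out as a corollary instead of being the engine of the proof. Two small patches you should make: (i) note that $R\neq\emptyset$ forces at least two distinct rungs (every cycle must use at least two), so $v_au_b\neq v_cu_d$ and your $C$ is a genuine cycle rather than a doubled edge; (ii) in the ``$\supseteq$'' inclusion you can sidestep the delicate face-identification entirely by invoking the same if-and-only-if fact above --- you only need each vertex of the two intervals to lie on some cycle, and your consecutive-rung cycles provide that without needing to be faces.
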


\begin{proof}
    Since $R$ is not empty, the embedding of $G[R]$ must contain a bounded face. 
    The vertices incident with any bounded face induce a cycle. 
    For the sake of contradiction, suppose $G[R]$ contains a cut vertex $x$. 
    By the definition of $R$, we know that $x$ is on a cycle $C_x$. 
    Since $x$ is a cut vertex, there is a vertex $y\in N_{G[R]}(x)\setminus V(C_x)$ which is on a cycle $C_y$. 

    Let $P$ and $Q$ be two induced paths of $G$, which realize the fact that $G$ is a graph of two parallel paths. 
    By definition of $P$ and $Q$, we have that $P\cap R$ and $Q\cap R$ forms a partition of $R$. 
    Notice that $C_x\not\subseteq P,Q$ and $C_y\not\subseteq P,Q$. 
    This implies that $P$ and $Q$ each contain at least one vertex from $C_x$ and one vertex from $C_y$. 
    Since $C_x$ and $C_y$ are separated by a cut vertex, this is a contradiction. 

    Now $G[R]$ is an outer-planar graph with no cut vertex such that every vertex is on the boundary of a bounded face. 
    Consider an outer-planar embedding of $G[R]$, and order the vertices $c_0,\dots, c_r$ by traversing the boundary of the unbounded face. 
    If there exists $c_i=c_j$ for $0\leq i< j \leq r$, then we have found a cut vertex. 
    This implies that the boundary of the unbounded face is a cycle.
\end{proof}

Notice that if $G[R]$ has two distinct Hamiltonian cycles, then $G[R]$ contains $K_4$ as a minor. 
Thus, with the assumption that $G$ is outer-planar, it follows that the Hamiltonian cycle of $G[R]$ is unique. 
In the case that $G[R]$ has a Hamiltonian cycle $C$ whose edges are the boundary of the unbounded face, we will call $C$ the \emph{outer-cycle of $G$}. 
We say an edge of $G[R]-E(C)$ is an \emph{interior edge}. 
Furthermore, interior edges do not cross in the following sense: 
if $C=c_0c_1\dots c_{r-1}c_0$ and $c_xc_y$ $c_uc_v$ are interior edges with $0\leq x,y,u,v\leq r$, then without loss of generality, $x\leq u$ and $v\leq y$. 
This again follows from the fact that $G$ contains no $K_4$ as a minor. 
One consequence of this line of reasoning is that any outer-planar embedding of $G[R]$ is combinatorially unique, where the boundary of the outer face is always given by $C$, and the relative positions of interior edges are preserved for all outer-planar embeddings of $G[R]$.\\
\indent What follows in this section are several auxiliary results needed in an effort to better understand the relationship between the outer-cycle and any interior edges of graphs of parallel paths with non-empty cores.

\begin{Lemma}\label{lemma:partition}
    Suppose $G$ is a graph of two parallel paths realized by $P$ and $Q$. 
    If the outer-cycle of the core $R$ is given by $C=c_0c_1\dots c_{r-1}c_0$, then $V(P)\cap V(R)$ and $V(Q)\cap V(R)$ are consecutive intervals and $P[V(P)\cap V(R)]$ and $Q[V(Q)\cap V(R)]$ only use edges in $C$.
\end{Lemma}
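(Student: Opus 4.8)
The plan is to work inside an outer-planar drawing of $G$ and show that $V(P)\cap V(R)$ cannot be broken into pieces along the outer-cycle without forcing $P$ and $Q$ to intersect. Write $A=V(P)\cap V(R)$ and $B=V(Q)\cap V(R)$. Since $C$ is a Hamiltonian cycle of $G[R]$ by Proposition~\ref{prop:hamiltoniancycle}, we have $V(R)=V(C)$, so $A$ and $B$ partition the vertex set of $C=c_0c_1\cdots c_{r-1}c_0$. First I would record two easy facts. Because $P$ and $Q$ are induced paths, neither contains a cycle, so neither can contain all of $V(C)$; hence $A$ and $B$ are both nonempty and proper. Moreover, any edge of $C$ joining two vertices of $V(P)$ is an edge of $G$ inside the induced path $P$, hence a path-edge $v_iv_{i+1}$; consequently every maximal run of consecutive $A$-vertices along $C$ is a subpath of $P$ (an interval of $P$-indices traversed monotonically), and likewise for $B$ and $Q$.

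The heart of the argument is to show that $A$ occupies a single arc of $C$. Suppose not. Then, reading cyclically around $C$, the colors change between $A$ and $B$ at least four times, so there are at least two $A$-runs and at least two $B$-runs, and I can choose $p_1,p_3\in A$ and $q_2,q_4\in B$ occurring in the cyclic order $p_1,q_2,p_3,q_4$ along $C$. Fix an outer-planar embedding of $G$, in which every vertex lies on the outer face. Restricting it to $G[R]$ yields an outer-planar embedding of $G[R]$, whose outer boundary is $C$ by the combinatorial uniqueness of such embeddings noted above; hence the vertices of $R$ appear along the outer face of $G$ in the cyclic order prescribed by $C$, and in particular $p_1,q_2,p_3,q_4$ interleave on the boundary. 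The subpath of $P$ from $p_1$ to $p_3$ and the subpath of $Q$ from $q_2$ to $q_4$ are then two vertex-disjoint curves in the closed disk bounded by the outer face whose four endpoints interleave on its boundary. By the Jordan curve theorem the first separates $q_2$ from $q_4$, so the second must meet it; since distinct edges of a planar embedding meet only at shared vertices, this contradicts $V(P)\cap V(Q)=\varnothing$. Thus $A$, and hence $B=V(C)\setminus A$, is a single arc.

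Finally I would assemble the conclusion. Having shown $A$ is a single arc $c_s,c_{s+1},\dots,c_t$, the second recorded fact says this arc is a subpath of $P$ using only edges of $C$; since $P$ is induced, $P[A]=P[V(P)\cap V(R)]$ is exactly this subpath, so it uses only edges of $C$. The complementary arc $B$ is likewise a subpath of $Q$ on $C$-edges, giving the statement for $Q$. This proves both that $V(P)\cap V(R)$ and $V(Q)\cap V(R)$ are consecutive intervals and that $P[V(P)\cap V(R)]$ and $Q[V(Q)\cap V(R)]$ use only edges in $C$.

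I expect the main obstacle to be making the crossing step fully rigorous: I must justify that the two relevant subpaths live in a common disk with their endpoints interleaved on the boundary, and that the embedding of $G$ restricts to the $C$-order on $V(R)$ (which the combinatorial uniqueness remark supplies). The cleanest formulation is the classical planarity fact that a disk with four boundary vertices in cyclic order $p_1,q_2,p_3,q_4$ admits no two vertex-disjoint paths joining $p_1$ to $p_3$ and $q_2$ to $q_4$. If a purely combinatorial route is preferred, one can instead exploit the monotonicity of the crossing edges built into the definition of a graph of two parallel paths to rule out a second $A$-run directly, avoiding any topological appeal.
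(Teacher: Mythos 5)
Your proposal is correct and takes essentially the same approach as the paper: the paper's proof also fixes an outer-planar embedding, notes that $P$ and $Q$ (being induced paths) each miss part of $C$, and derives the consecutive-interval claim by a Jordan curve theorem separation argument applied to a subpath of $P$ joining two core vertices $c_i,c_k$ that straddle a vertex $c_j\notin P$ (with $c_{r-1}\notin P$ playing the role of your fourth interleaved point), before concluding the edge claim from inducedness and the path structure. The only difference is presentational: the paper phrases the contradiction via indices $i<j<k$ and the fixed vertex $c_{r-1}$, while you phrase it via the classical ``no two disjoint paths with interleaved endpoints in a disk'' fact, which is the same topological step.
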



\begin{proof}
    Since $G$ is a graph of two parallel paths with a non-empty core, there must be a non-trivial partition of the vertices of $G$ into two induced paths $P$ and $Q$.
    Let $C=c_0c_1\dots c_{r-1}c_0$ be the outer-cycle of $G$, obtained from Proposition~\ref{prop:hamiltoniancycle}.
    Let $f$ be the unbounded face of an outer-planar embedding of $G$. 
    Then $X=\mathbb R^2\setminus f$ is the closed and bounded region corresponding to $G[R]$ whose boundary is $C$.
    Notice that if $P\cap C=C$, then $P$ is not an induced path. 
    Therefore, $C\setminus P$ is non-empty.
    In particular, assume that $c_{r-1}\notin P$. 
    Furthermore, if $G[R]$ is a cycle, then we are done.
    
    We claim that (without loss of generality) $P\cap C$ consists of consecutive vertices on $C$ modulo $r$.
    To prove this claim, suppose for the sake of contradiction that there exist indices $0\leq i< j<k<r-1$ such that $c_i,c_k\in P$ but $c_j\notin P$. 
    Since $P$ is a path, there is a path contained in the interior of $X$ from $c_i$ to $c_k$. 
    Notice that this path separates $c_j$ from $c_{r-1}$ by the Jordan Curve Theorem. 
    This contradicts the fact that $Q$ is a path. 
    Thus, $P\cap C$ consists of consecutive vertices of $C$, and by a symmetric argument, the same holds for $Q\cap C$. 

    Furthermore, since $P$ is an induced path, the edges of $P[P\cap C]$ are contained in the edges of $C$. 
    In particular, neither $P$ nor $Q$ contains an edge that is not a boundary edge of the unbounded face. 
\end{proof}

Provided $G[R]$ is not a cycle, we have a useful description of its composition.

\begin{Lemma}\label{lemma:interiorface}
    If $G$ is a graph of two parallel paths with a non-empty core $R$ and outer-cycle $C$, then every induced cycle of $G$ has at most two interior edges. 
\end{Lemma}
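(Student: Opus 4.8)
The plan is to reduce the statement to a short counting argument about how an induced cycle meets the two arcs into which $P$ and $Q$ split the outer-cycle $C$. First I would record that any induced cycle $D$ of $G$ has all of its vertices in $R$ (by the characterization of $R$ as the set of vertices lying on induced cycles), so $D$ is an induced cycle of $G[R]$ and, in the combinatorially unique outer-planar embedding of $G[R]$, all vertices of $D$ lie on the circle carrying $C$. Since the embedding is planar, no two edges of $D$ cross; and a cycle through points in convex position whose edges are pairwise non-crossing must visit those points in their cyclic order along the circle. Hence $D$ traverses its vertices in the same circular order as $C$.

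Next I would invoke Lemma~\ref{lemma:partition}: the sets $V(P)\cap V(R)$ and $V(Q)\cap V(R)$ are two consecutive arcs of $C$, and $P[V(P)\cap V(R)]$ and $Q[V(Q)\cap V(R)]$ use only edges of $C$. The second fact shows that any edge of $G[R]$ with both endpoints in the same part must already be an edge of $C$ (since $P$ and $Q$ are induced paths, an edge inside a part is an edge of that path, which inside $R$ lies on $C$). Consequently every interior edge joins a vertex of the $P$-arc to a vertex of the $Q$-arc.

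Now I would combine the two observations. Because $D$ visits its vertices in circular order and the $P$-arc and $Q$-arc are each contiguous on $C$, the vertices of $D$ lying in the $P$-arc form one contiguous block of the cyclic sequence of $D$, and likewise for those in the $Q$-arc. A cyclic sequence that splits into at most two contiguous blocks has at most two edges joining the two blocks; that is, at most two edges of $D$ run between the $P$-arc and the $Q$-arc. Since every interior edge of $D$ is such a cross-arc edge (the other cross-arc edges being the at most two junction edges of $C$), it follows that $D$ has at most two interior edges. I expect the main obstacle to be the careful justification of the circular-order step, namely that an induced cycle in the outer-planar embedding cannot ``zig-zag'' repeatedly between the two arcs; this is precisely the non-crossing/convex-position fact, and once it is in hand the remaining bookkeeping follows directly from Lemma~\ref{lemma:partition} and the definition of interior edges.
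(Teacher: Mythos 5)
Your proof is correct, and it rests on the same two foundations as the paper's: the observation that an induced cycle lies in $R$ and must respect the cyclic order of $C$ (by outer-planarity), and Lemma~\ref{lemma:partition}. Where you differ is in how the bound of two is extracted. The paper argues by contradiction: it assumes three interior edges $c_{x_1}c_{x_2}$, $c_{y_1}c_{y_2}$, $c_{z_1}c_{z_2}$ ordered along $C$, applies the Pigeonhole Principle to place two of the endpoints $c_{x_1},c_{y_1},c_{z_1}$ in the same path, say $P$, and then uses the interval structure from Lemma~\ref{lemma:partition} to force $P$ to contain a chord together with the arc of $C$ it subtends, i.e.\ a cycle, contradicting that $P$ is an induced path. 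You instead make explicit a structural fact that the paper only exploits implicitly inside its contradiction: since $P$ and $Q$ are induced and their restrictions to $R$ use only edges of $C$, every interior edge must join the $P$-arc to the $Q$-arc. Combined with the elementary counting fact that a cycle whose vertex set splits into two cyclically contiguous blocks has at most two edges running between the blocks, this gives the bound directly, with no case analysis on a three-edge configuration. Your route is cleaner and produces a reusable intermediate statement (all interior edges are cross-arc edges, and any induced cycle has at most two cross-arc edges); the paper's route avoids the block-counting step, needing only pigeonhole and the interval property, at the cost of some index bookkeeping. Both are sound; just be sure, as you anticipated, to justify carefully that non-crossing chords on points in convex position force the cycle to visit them in circular order, which is exactly the step the paper also asserts from outer-planarity.
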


\begin{proof}
    For the sake of contradiction, suppose that $G$ contains an induced cycle $C'$, which contains at least three interior edges.
    By definition of the core, $V(C')\subseteq V(C)$. 
    Notice that $C'$ has at least three vertices, and due to the outer-planar embedding, the vertices of $C'$ must respect the cyclic order of $C$; otherwise, there is a pair of crossing edges in the embedding of $G$. 
    In particular, $C'=c_{i_0}\cdots c_{i_j}$ with $0\leq i_0<i_1<\cdots i_j<r$. 
    Let $c_{x_1}c_{x_2}, c_{y_1}c_{y_2}$, and $c_{z_1}c_{z_2}$ be three interior edges of $C'$ so that $i_0\leq x_1<x_2\leq y_1< y_2\leq z_1<z_2$ with the possibility that $x_1=z_2$. 
    By definition of interior edges $x_1+1< x_2$, $y_1+1< y_2$, and $z_1+1<z_2$.  
    Furthermore, $P\cap C'$ and $Q\cap C'$ must partition $C'$ into two non-empty subsets, which are consecutive intervals in $C$ by Lemma~\ref{lemma:partition}. 
    By the Pigeon Hole Principle, we may assume that of $c_{x_1}, c_{y_1}, c_{z_1}$ say $c_{x_1},c_{y_1}\in P$ while $c_{y_2}\in Q$. 
    Then $P$ must contain all the vertices between $c_{x_1}$ and $c_{y_1}$ on $C$, forming a cycle in $V(P)$. 
    This contradicts the fact that $P$ is an induced path. 
\end{proof}

\begin{Lemma}\label{lemma:endface}
    Suppose $G$ is a graph of two parallel paths with a non-empty core $R$. 
    If $G[R]$ is not a cycle, then $G[R]$ contains exactly two induced cycles with exactly one interior edge.
\end{Lemma}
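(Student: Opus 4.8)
The plan is to pass to the planar picture and reduce the statement to a counting argument. By Proposition~\ref{prop:hamiltoniancycle} the outer-cycle $C=c_0c_1\cdots c_{r-1}c_0$ bounds a closed disk, and the interior edges are pairwise non-crossing chords of $C$; they subdivide this disk into bounded faces. Since $G[R]$ is not a cycle, the number $t$ of interior edges satisfies $t\ge 1$, and a standard Euler-characteristic count (with $r$ vertices and $r+t$ edges) gives exactly $t+1$ bounded faces. I would first recast the quantity of interest: rather than counting induced cycles directly, I count the bounded faces whose boundary uses exactly one interior edge, and then argue that these two counts coincide.

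The key structural step is a bijection between induced cycles of $G$ with exactly one interior edge and bounded faces whose boundary contains exactly one interior edge. On one hand, every bounded face is bounded by an induced cycle: a chord among its boundary vertices would either be a boundary edge (already accounted for) or pass through the face's interior, which is impossible. Hence a face with exactly one interior edge on its boundary yields an induced cycle with exactly one interior edge. Conversely, suppose $C'$ is an induced cycle with a single interior edge $c_ac_b$; then $C'$ consists of $c_ac_b$ together with one arc of $C$ between $c_a$ and $c_b$. Because $C'$ is induced, no further chord joins two of its vertices, so the region it encloses contains no interior edges and is therefore a single bounded face whose boundary is exactly $C'$. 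This matches $C'$ with a face carrying exactly one interior edge, and distinct cycles give distinct faces, so the correspondence is a bijection.

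With the reduction in place, I finish by a handshake count on incidences between bounded faces and interior edges. Each interior edge lies in the interior of the disk, so it borders exactly two bounded faces; summing over the $t$ interior edges gives $2t$ incidences. On the face side, every bounded face has at most two interior edges on its boundary by Lemma~\ref{lemma:interiorface}, and at least one such edge, since a face bounded only by edges of $C$ would have to be bounded by all of $C$, contradicting $t\ge 1$. Letting $a$ and $b$ denote the numbers of bounded faces with exactly one and exactly two interior edges respectively, I obtain $a+b=t+1$ and $a+2b=2t$, whence $a=2$. Combined with the bijection, this yields exactly two induced cycles with exactly one interior edge.

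I expect the main obstacle to be the converse direction of the bijection, i.e. verifying that an induced cycle with a single interior edge actually bounds a face (encloses no further chords); everything else — the Euler count, the handshake identity, and the ``at least one interior edge per face'' point — is routine once the planar set-up from Proposition~\ref{prop:hamiltoniancycle} and the nesting of interior edges are invoked.
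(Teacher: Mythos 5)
Your proof is correct, but it takes a genuinely different route from the paper's. The paper argues entirely inside the parallel-paths framework: to get a second cycle it starts from one induced cycle $C^1$ with a single interior edge and makes an extremal choice (the largest index $x$ with $c_x\in P$ incident to an interior edge, with $c_y$ chosen closest to $Q$'s endpoint) to exhibit a distinct cycle $C^2$; to rule out a third, it uses the Pigeon Hole Principle together with Lemma~\ref{lemma:partition} (each of $P\cap C$, $Q\cap C$ is an interval of the outer-cycle) to show that three such cycles would force a full cycle inside $P$ or $Q$, contradicting that these are induced paths. You instead do planar counting: Euler's formula gives $t+1$ bounded faces for $t$ non-crossing chords, the bijection identifies induced cycles with one interior edge with bounded faces carrying one chord, and the handshake identity $a+2b=2t$, $a+b=t+1$ (using Lemma~\ref{lemma:interiorface} for the bound of two chords per face) forces $a=2$; equivalently, the weak dual of the chord dissection is a path, and a path has exactly two leaves. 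What your approach buys is generality and a single computation that delivers both bounds at once: the parallel-paths hypothesis enters only through Lemma~\ref{lemma:interiorface}, so your argument proves the statement for any polygon dissection by non-crossing chords in which no face sees three chords, and it additionally tells you there are exactly $t-1$ faces with two interior edges. What the paper's approach buys is bookkeeping of where $P$ and $Q$ sit relative to $C^1$ and $C^2$, which is exactly the information reused in Lemma~\ref{lem:insertion} and later structure results. Two small points to tighten: your justification that every bounded face boundary is an induced cycle ("the chord would pass through the face's interior") is literally valid only after fixing a straight-line embedding with the outer-cycle in convex position (legitimate here, since the paper notes the outer-planar embedding of $G[R]$ is combinatorially unique); alternatively, argue that such a chord would separate two boundary vertices of the face while the face must lie on one side of it. Also, Lemma~\ref{lemma:interiorface} is stated for induced cycles of $G$ while the claim concerns $G[R]$; these coincide because $R$ is by definition the set of vertices lying on induced cycles and $G[R]$ is an induced subgraph, but that identification is worth one sentence.
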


\begin{proof}
    Since $G[R]$ is not a cycle, it contains at least one interior edge.
    In particular, there is at least one induced cycle in $G[R]$, which contains exactly one interior edge.
    Let $P$ and $Q$ be the induced paths that realize the fact that $G$ is a graph of two parallel paths. 
    Let the outer-cycle be denoted by $C = c_0 c_1 \dots c_{r-1}c_0$.
    
    Suppose $C^1=c_0\dots c_jc_0$ is an induced cycle of $G[R]$ with exactly one interior edge given by $c_0c_j$ (with $j<r-1$).
    Without loss of generality, $P\cap C=c_i\dots c_k$ with $0<i\leq j\leq k\leq r-1$ and $Q\cap C=c_{i-1}c_{i-2}\dots c_{k+1}$. 
    Let $x$ be the largest index such that $c_x\in P$ is incident with an interior edge, say $c_xc_y$ where $y$ is selected to minimize the distance to $c_{k+1}$ along $Q$.  
    Notice $i\leq x<y\leq r$.
    Since $G$ is outer-planar, $C^2=c_x\dots c_yc_x$ is an induced cycle with the only interior edge being $c_xc_y$. 
    Notably, $C^2$ does not contain the edge $c_0c_1$, so $C^1\neq C^2$.
    In particular, $G$ contains at least two induced cycles with exactly one interior edge.
    
    For the sake of contradiction, suppose that $C^1, C^2,$ and $C^3$ are induced cycles of $G[R]$ with exactly one interior edge. 
    Once again, using the outer-cycle $C=c_0c_1\dots c_{r-1}c_0$, we may assume that the induced cycles are disjoint intervals of $C$ given by $C^i=c_{x_i}\cdots c_{y_i}c_{x_i}$ with interior edge $c_{x_i}c_{y_i}$ such that $x_i+1< y_i$.
    Without loss of generality, $0\leq x_1< y_1\leq x_2< y_2\leq x_3< y_3\leq r$ with the possibility that $c_{x_1}=c_{y_3}$. 
    Without loss of generality, $x_1,x_2\in P$ and $x_3\in Q$ by the Pigeon Hole Principle. 
    Since $P$ must contain an interval of $C$ by Lemma~\ref{lemma:partition}, we have $C^1\subseteq P$. 
    This contradicts the fact that $P$ is an induced path. 
\end{proof}

\begin{Lemma}\label{lem:insertion}
    Suppose $G$ is a graph of two parallel paths partitioned by $P$ and $Q$ with non-empty core $R$. 
    If $G$ contains exactly two induced cycles with one interior edge $C^1, C^2$, then at least one endpoint of $P[V(P)\cap V(R)]$ and at least one endpoint of $Q[V(Q)\cap V(R)]$  is contained in $C^i$ for $i=1,2$. 
\end{Lemma}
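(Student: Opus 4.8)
Looking at this lemma, I need to understand what it's claiming.

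We have a graph $G$ of two parallel paths, partitioned by induced paths $P$ and $Q$, with non-empty core $R$. The hypothesis is that $G$ has exactly two induced cycles with one interior edge, called $C^1$ and $C^2$. By the previous lemmas, these are the "end faces" of the structure. The claim is that at least one endpoint of the path $P[V(P) \cap V(R)]$ and at least one endpoint of $Q[V(Q) \cap V(R)]$ lies in $C^i$ for each $i = 1, 2$.

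Let me think through the geometry and plan my proof.

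---

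The plan is to work with the outer-cycle $C = c_0 c_1 \cdots c_{r-1} c_0$ and to use Lemma~\ref{lemma:partition}, which tells us that $V(P) \cap V(R)$ and $V(Q) \cap V(R)$ are consecutive intervals along $C$, and that the restricted paths $P[V(P)\cap V(R)]$ and $Q[V(Q)\cap V(R)]$ use only edges of $C$. Since these two intervals partition the cyclic sequence $c_0,\dots,c_{r-1}$ into two arcs, the four endpoints of these two sub-paths are precisely the four vertices where the partition ``switches'' between $P$ and $Q$. Concretely, if $P \cap C = c_i \cdots c_k$ (as an interval) then $Q \cap C = c_{k+1} \cdots c_{i-1}$ (indices mod $r$), so the endpoints are $c_i, c_k$ for the $P$-arc and $c_{k+1}, c_{i-1}$ for the $Q$-arc. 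The heart of the argument is to locate $C^1$ and $C^2$ relative to these switch-points.

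First I would set up notation for the interior edges: write $C^1 = c_{x_1}\cdots c_{y_1} c_{x_1}$ and $C^2 = c_{x_2}\cdots c_{y_2}c_{x_2}$ with interior edges $c_{x_1}c_{y_1}$ and $c_{x_2}c_{y_2}$, arranged along $C$ so that (without loss of generality) $0 \le x_1 < y_1 \le x_2 < y_2 \le r$, possibly with $c_{x_1} = c_{y_2}$ (the cyclic wrap-around case). The key structural fact I would invoke is that $C^1$ and $C^2$ are the \emph{only} induced cycles with a single interior edge; combined with the non-crossing property of interior edges established before Lemma~\ref{lemma:interiorface}, this forces all the interior edges of $G[R]$ to be ``nested'' between the two end-faces $C^1$ and $C^2$. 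Then I would argue that the two switch-points of the partition cannot both lie strictly in the interior of one of the arcs bounded by a single interior edge, because that would force a cycle entirely inside $P$ (or inside $Q$) — contradicting that $P$ and $Q$ are induced paths, exactly as in the proofs of Lemmas~\ref{lemma:interiorface} and~\ref{lemma:endface}.

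For the main claim, I would fix $i = 1$ and show a $P$-endpoint lies in $C^1$. Suppose for contradiction that neither endpoint of the $P$-arc lies on $C^1 = c_{x_1}\cdots c_{y_1}$. Because $C^1$'s only interior edge is $c_{x_1}c_{y_1}$, the vertices $c_{x_1+1},\dots,c_{y_1-1}$ form a path that can only leave this face through the endpoints $c_{x_1}, c_{y_1}$. If no switch-point of the partition lies in this face, then either all of $\{c_{x_1},\dots,c_{y_1}\}$ belongs to $P$ or all belongs to $Q$; in the former case $P$ would contain the cycle $C^1$ (since $P$ uses the $C$-edges along this arc plus these vertices force the chord to be induced), contradicting that $P$ is an induced path, and symmetrically for $Q$. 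Hence a switch-point — i.e. an endpoint of $P[V(P)\cap V(R)]$ — must lie on $C^1$, and by the symmetric statement the same holds for $Q$. Repeating with $i=2$ completes the proof.

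The main obstacle I anticipate is the bookkeeping of the cyclic/wrap-around case $c_{x_1} = c_{y_2}$ together with correctly ruling out the possibility that \emph{both} endpoints of, say, the $P$-arc land on the same end-face while the other end-face receives none; the clean way to handle this is to observe that the two arcs of the partition must each meet \emph{both} $C^1$ and $C^2$, which follows because each of $C^1, C^2$ must be split between $P$ and $Q$ (neither cycle can lie wholly in an induced path), so each end-face contains at least one switch-point, and a counting argument over the (exactly four) switch-points then distributes them so that each of $C^1, C^2$ contains an endpoint from each of $P$ and $Q$.
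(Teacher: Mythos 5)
Your proof is correct and follows essentially the same route as the paper's: invoke Lemma~\ref{lemma:partition} to see that $V(P)\cap V(R)$ and $V(Q)\cap V(R)$ are consecutive arcs of the outer-cycle, then observe that neither $C^1$ nor $C^2$ can lie wholly inside the induced path $P$ or $Q$, so each $C^i$ must contain a switch-point of the partition, i.e.\ an endpoint of each arc. The paper states this in three lines; your extra machinery about nested interior edges and counting the four switch-points is unnecessary but harmless.
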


\begin{proof}
    By Lemma~\ref{lemma:partition}, $P[V(P)\cap V(R)]$ and $Q[V(Q)\cap V(R)]$ partitions $R$ into consecutive intervals of the outer-cycle $C$. 
    Since $P$ and $Q$ are induced paths, we have that $C^1,C^2\not\subseteq P$ and $C^1,C^2\not\subseteq Q$. 
    Thus, $P[V(P)\cap V(R)]$ and $Q[V(Q)\cap V(R)]$ each have one endpoint in $C^1$ and one endpoint in $C^2$. 
\end{proof}

\begin{Lemma}\label{lem:insertion.points}
    Suppose $G$ is a graph of two parallel paths partitioned by $P$ and $Q$ with non-empty core $R$. 
    Then $G-R$ is the disjoint union of at most $4$ paths $\{P^i\}_{i=1}^k$ with $k\leq 4$ such that there is exactly one edge $x_iy_i$ where $x_i$ is a leaf of $P^i$ and $y_i\in R$.
\end{Lemma}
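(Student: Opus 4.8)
The plan is to read the structure of $G-R$ directly off the way the two induced paths $P$ and $Q$ meet the outer-cycle $C=c_0c_1\cdots c_{r-1}c_0$ supplied by Proposition~\ref{prop:hamiltoniancycle}. First I would invoke Lemma~\ref{lemma:partition} to write $V(P)\cap V(R)$ and $V(Q)\cap V(R)$ as two complementary, nonempty consecutive arcs of $C$ (nonempty because neither induced path can contain the entire cycle $C$), on which $P$ and $Q$ use only edges of $C$. Since $P$ is an induced path, $G[V(P)\cap V(R)]=P[V(P)\cap V(R)]$ is exactly the subpath of $C$ spanned by that arc, hence connected; and a set of vertices of a path induces a connected subgraph only when it is an interval of the path. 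Thus $V(P)\cap V(R)$ occupies a contiguous block of $P$, so deleting $R$ splits $P$ into at most two subpaths, one on each side of the block. Doing the same for $Q$ yields at most four induced subpaths $P^1,\dots,P^k$ with $k\le 4$, each a contiguous piece of $P$ or of $Q$ lying entirely outside $R$, and together these exhaust $V(G)\setminus V(R)$.

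The engine for the remaining claims is the observation that $R$ equals the set of vertices lying on an induced cycle, together with the fact that any vertex lying on some cycle lies on an induced cycle (take a shortest cycle through it; a chord would produce a strictly shorter cycle still through the vertex). Consequently every vertex of $G-R$ lies on no cycle at all. I would use this as follows. Consider the piece $p_1\cdots p_s$ of $P$ preceding its core block, with $p_s$ adjacent to the first core vertex $p_{s+1}\in R$; inside $P$ this piece is joined to $R$ only by the edge $p_sp_{s+1}$, and $p_s$ is an endpoint of the piece. Any other edge $p_jc$ from a vertex $p_j$ of the piece to a vertex $c\in R$ is either a chord of the induced path $P$ (when $c$ lies in $P$'s core block, which is impossible), or, when $c\ne p_{s+1}$, closes a cycle $p_j\cdots p_s\,p_{s+1}\,[\text{path in }R\text{ from }p_{s+1}\text{ to }c]\,c\,p_j$; here the path inside $R$ exists because $G[R]$ has a Hamiltonian cycle by Proposition~\ref{prop:hamiltoniancycle} and is therefore connected, and all of its vertices avoid the non-core vertices $p_j,\dots,p_s$. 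This cycle runs through $p_j\notin R$, a contradiction. Hence each $P^i$ has exactly one edge to $R$, and it is incident to a leaf of $P^i$.

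Finally I would verify that the $P^i$ are exactly the connected components of $G-R$, i.e. that no edge joins two distinct pieces. Two pieces coming from the same path are separated by that path's core block, so an edge between them would be a chord of an induced path, which is impossible. An edge $p_\alpha q_\beta$ between a piece of $P$ and a piece of $Q$ would, by running from $p_\alpha$ to its attachment vertex in $R$, through $G[R]$ to the attachment vertex of $q_\beta$'s piece, and back along $Q$ to $q_\beta$, again close a cycle through the non-core vertex $p_\alpha$ --- impossible by the same reasoning, the two attachment vertices being distinct since they lie in the disjoint arcs $V(P)\cap V(R)$ and $V(Q)\cap V(R)$. Therefore $G-R$ has no edges across distinct pieces, so it is precisely the disjoint union of the induced paths $P^1,\dots,P^k$, each attached to $R$ by a single edge at a leaf. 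The main obstacle is this middle step: ruling out ``stray'' edges from a pendant piece back to the core or to another piece. The whole argument hinges on converting any such edge into a genuine cycle through a non-core vertex, which relies on the connectivity of $G[R]$ from Proposition~\ref{prop:hamiltoniancycle} and the characterization of $R$ via induced cycles; keeping the resulting cycle simple (disjoint tail and core portions) is the only point requiring care.
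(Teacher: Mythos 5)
Your proof is correct, but it takes a genuinely different route from the paper's. Both arguments hinge on the same key fact --- a vertex lying on any cycle lies on an induced cycle and hence in $R$, so no cycle of $G$ passes through a vertex of $G-R$ --- and both use it to force each pendant piece to meet $R$ in exactly one edge. The difference is in how the path structure and the bound $k\leq 4$ arise. The paper works bottom-up from the components of the forest $G-R$: it shows each component (together with its attachment vertex $y_i$) is a path by supposing a vertex of degree at least $3$, extracting three leaves of the resulting tree, and deriving a contradiction from the facts that $P$ and $Q$ are induced paths and that $Q$ must reach the cycle inside $G[R]$; the count $k\leq 4$ then comes from each component containing a pendant of $G$ and each of $P,Q$ containing at most two pendants of $G$. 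You instead work top-down from the path partition: Lemma~\ref{lemma:partition} makes $V(P)\cap V(R)$ an interval of $P$, so deleting $R$ leaves at most two subpaths of $P$ and at most two of $Q$, giving the path structure and the bound of four immediately; the work then shifts to showing these pieces are exactly the components of $G-R$, i.e.\ ruling out stray edges between pieces or back into the core, which you do via the cycle-through-a-non-core-vertex construction (using connectivity of $G[R]$ from Proposition~\ref{prop:hamiltoniancycle}). Your route buys a cleaner count and avoids the degree-$3$ case analysis, at the cost of invoking Lemma~\ref{lemma:partition} and of the disjointness and simplicity checks in the cycle constructions; the paper's route does not need Lemma~\ref{lemma:partition} and gets ``exactly one edge per component'' almost for free, but must work harder to see that the components are paths. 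Both are sound proofs of the statement.
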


\begin{proof}
    By definition of $R$, it follows that $G-R$ is a forest, and each component $G-R$ is connected to $R$ via exactly one edge (otherwise, some vertex in $G-R$ would be in a cycle of $G$). 
    Let the components of $G-R$ be enumerated as $\{P^i\}_{i=1}^k$ and let $x_iy_i$ be the edge with $x_i\in P^i$ and $y_i\in R$. 

    For the sake of contradiction, suppose that $G[V(P^i)\cup \{y_i\}]$ contains a vertex $v$ of degree $3$ or greater. 
    Notice that $G[V(P^i)\cup \{y_i\}]$ is a tree with at least three leaves $\ell_1,\ell_2,y_i$. 
    Without loss of generality, suppose $v,y_i,\ell_1\in P$.
    Notice that $\ell_2\in Q$ since $P$ is an induced path. 
    Furthermore, since $G[R]$ contains a cycle, some vertex $u$ in $R$ is contained in $Q$. 
    This implies that $Q$ contains a sub-path from $\ell_2$ to $u$ since $Q$ is an induced path.
    However, this implies that $v\in Q$, which is a contradiction. 
    Therefore, each $G[V(P^i)\cup\{y_i\}]$ is a path. 

    Notice that each $P^i$ contains a pendant of $G$. 
    Since $P$ and $Q$ can each contain at most two pendants of $G$, it follows that $k\leq 4$.
\end{proof}

In light of Lemma~\ref{lem:insertion.points}, we introduce some terminology. 
If $x_iy_i$ is the edge from $P^i$ to $G[R]$ as in the statement of Lemma~\ref{lem:insertion.points}, then we call $y_i$ an \emph{insertion point} of $G$. A key branching point in our analysis of graphs of two parallel paths is whether all insertion points are distinct. 

\begin{Proposition}\label{prop:alpha.case}
    Suppose $G$ is a graph of two parallel paths partitioned by $P$ and $Q$ with non-empty core $R$. Let $G-R$ be the disjoint union of at most $4$ paths $\{P^i\}_{i=1}^k$ with $2\leq k\leq 4$ such that there is exactly one edge $x_iy_i$ where $x_i$ is a leaf of $P^i$ and $y_i\in R$. 
    If $y_1=y_2$, then every interior edge of $G$ is incident to $y_1$, and $y_3,y_4 \in N(y_1)$. 
\end{Proposition}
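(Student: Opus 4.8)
The plan is to pin down the location of the shared insertion point $y:=y_1=y_2$ relative to the two induced paths $P$ and $Q$, and to show that $y$ is essentially the only core vertex that its own path ``sees''. Write $C=c_0c_1\cdots c_{r-1}c_0$ for the outer-cycle of $R$ (Proposition~\ref{prop:hamiltoniancycle}) and let $c^-,c^+$ be the two neighbors of $y$ on $C$. Since $P^1$ and $P^2$ both attach to $y$ via the edges $x_1y$ and $x_2y$, the vertex $y$ has at least the four distinct neighbors $c^-,c^+,x_1,x_2$ (distinct because the cycle has length at least $3$ and $x_1,x_2\in G-R$), so $\deg_G(y)\ge 4$. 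Without loss of generality I would assume $y\in P$.

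First I would locate the two pendants. I claim $x_1,x_2\in P$. If instead some $x_i\in Q$, then since $P^i$ is a component of $G-R$ joined to $R$ only through the single edge $x_iy$ (Lemma~\ref{lem:insertion.points}) and $y\in P$, the vertices of $P^i\cap Q$ would form a component of $G[V(Q)]$ disjoint from the core vertices of $Q$; but $V(Q)\cap V(R)\ne\varnothing$ (otherwise $V(P)\supseteq V(R)$ and $P$ would contain the cycle $C$, contradicting that it is an induced path), so this contradicts the connectedness of $Q$. Hence $x_1,x_2\in P$, and because $P$ is an induced path these are precisely the two neighbors of $y$ inside $P$. In particular $y$ has no $R$-neighbor in $P$; since $V(P)\cap V(R)$ is a consecutive interval of $C$ by Lemma~\ref{lemma:partition}, this forces $V(P)\cap V(R)=\{y\}$ and $V(Q)\cap V(R)=V(R)\setminus\{y\}$.

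For the claim on interior edges I would exploit that $Q$ is an induced path. By Lemma~\ref{lemma:partition}, $Q[V(Q)\cap V(R)]$ uses only edges of $C$, so $V(R)\setminus\{y\}$ induces within $Q$ the chordless path $c^+\cdots c^-$. Every interior edge of $G$ has both endpoints in $V(R)$; if such an edge avoided $y$, both its endpoints would lie in $V(Q)\cap V(R)$ and it would be a chord of this induced path, a contradiction. Therefore every interior edge is incident to $y$, giving the first conclusion.

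Finally, for $y_3,y_4\in N(y)$, I would reuse the connectivity argument for the remaining pendants. For $j\in\{3,4\}$, first $y_j\ne y$: a third pendant at $y$ cannot sit in $P$ (both $P$-slots of $y$ are already taken by $x_1,x_2$) and cannot sit in $Q$ (its leaf would attach only to $y\in P$, disconnecting $Q$ as above). Thus $y_j\in V(Q)\cap V(R)=V(R)\setminus\{y\}$, and the same disconnection argument shows $P^j\subseteq Q$. Since $Q$ is an induced path, $y_j$ cannot be interior to $c^+\cdots c^-$ (it would then have two core neighbors together with the pendant leaf $x_j$, giving degree $3$ in $Q$), so $y_j$ must be an endpoint of this interval, i.e. $y_j\in\{c^+,c^-\}\subseteq N(y)$. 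The main obstacle is making the repeated ``a pendant placed on the wrong side disconnects its path'' arguments airtight; once those are secured, everything else follows from Lemma~\ref{lemma:partition} and the induced-path condition.
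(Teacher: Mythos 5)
Your proof is correct and follows essentially the same route as the paper's: place $y_1$ in $P$ without loss of generality, show $V(P)=V(P^1)\cup V(P^2)\cup\{y_1\}$ so that $Q$ contains the rest of the core as an induced sub-path, and then read off both conclusions from $Q$ being an induced path. The only difference is that you spell out the connectivity (``disconnection'') arguments that the paper compresses into the single cut-vertex assertion, which makes your write-up a fully detailed version of the same proof.
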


\begin{proof}
By Lemma~\ref{lemma:partition}, $P$ and $Q$ each contain a vertex from $R$. 
Suppose that $y_1c_1\dots c_{r-1}$ is the outer-cycle of $R$ and that $P$ contains $y_1$. 
Since $y_1$ is a cut vertex separating $P^1$ and $P^2$ from $R$, it follows that $V(P^1)\cup V(P^2)\cup\{y_1\}=V(P)$. 
Then $Q$ contains $c_1\dots c_{r-1}$ as an induced sub-path. Thus, $y_3,y_4$ (if the corresponding components exist) must be $c_1$ and $c_{r-1}$. 
Since $Q$ is an induced path, all internal edges of $R$ must contain $y_1$. 
\end{proof}

In light of Proposition~\ref{prop:alpha.case}, we have a good understanding of the structure of graphs of two parallel paths with non-empty cores and repeated insertion points $y_1=y_2$ of the pendant paths. 
These graphs must look like some graph depicted in Figure \ref{fig:alpha}, which we call $\alpha$-graphs for convenience. 

\begin{Observation}\label{obs:alpha.unique}
    The path partition $P$ and $Q$ of an $\alpha$-graph, which is a graph of two parallel paths, is unique, as the repeated insertion point $y_1=y_2$ forces $V(P^1)\cup V(P^2)\cup\{y_1\}=V(P)$. 
\end{Observation}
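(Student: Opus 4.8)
The plan is to show that in \emph{any} realization of the $\alpha$-graph $G$ as two parallel paths by induced paths $P$ and $Q$, the part containing the repeated insertion point $y_1$ is forced to equal $V(P^1)\cup V(P^2)\cup\{y_1\}$; since the other part is then its complement, the unordered partition $\{V(P),V(Q)\}$ is completely determined. So I would fix such a realization and, without loss of generality, assume $y_1\in V(P)$.

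First I would show that neither pendant path can be split across the two parts, i.e.\ $V(P^1)\subseteq V(P)$ and $V(P^2)\subseteq V(P)$. The point is that $y_1$ is a cut vertex whose removal detaches $P^1$ (and $P^2$) from the rest of $G$: the only edge leaving $P^i$ is $x_iy_i=x_iy_1$. Hence if $V(Q)$ met $V(P^i)$ in a proper, nonempty way, then $G[V(Q)]$ would have no edge joining its $P^i$-portion to its complement (that edge would have to be $x_iy_1$, but $y_1\notin V(Q)$), contradicting that $Q$ is a connected induced path. This leaves only two options: $V(P^i)\subseteq V(P)$, or $V(Q)\subseteq V(P^i)$. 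The second option I would rule out by a degree count: if $V(Q)\subseteq V(P^1)$, then the two outer-cycle neighbors of $y_1$ (call them $c_1,c_{r-1}$, which exist because $R$ is nonempty) and the leaf $x_2$ all lie in $V(P)$, so $y_1$ would have at least three neighbors inside the induced path $P$, which is impossible.

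With $V(P^1),V(P^2)\subseteq V(P)$ in hand, the second step is a local degree argument at $y_1$. Both $x_1$ and $x_2$ are neighbors of $y_1$ and lie in $V(P)$, so they are exactly the two neighbors of $y_1$ along the induced path $P$; in particular $y_1$ is internal to $P$ and none of its remaining neighbors (the $c_i$ and the endpoints of interior edges) lie in $V(P)$. Tracing $P$ outward from $y_1$ through $x_1$, the walk cannot leave $V(P^1)$ except back through $y_1$, so it must run all the way to the pendant leaf of $P^1$; symmetrically the $x_2$-arm exhausts $P^2$. Therefore $V(P)=V(P^1)\cup\{y_1\}\cup V(P^2)$, as required. (Single-vertex pendant paths are handled identically, with that arm terminating immediately at $x_i$.)

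I expect the main obstacle to be the first step—cleanly excluding the degenerate possibility that all of $Q$ hides inside one pendant path—since that is where the nonemptiness of the core and the structural conclusions of Proposition~\ref{prop:alpha.case} (all interior edges meeting $y_1$, and $c_1,c_{r-1}$ being genuine $R$-neighbors of $y_1$) actually get used. Everything else reduces to the observation that the degree-$\le 2$ constraint at $y_1$, together with the cut-vertex structure, leaves no freedom in how the two pendant paths and $y_1$ may be distributed between $P$ and $Q$.
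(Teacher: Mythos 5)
Your proof is correct and follows essentially the same route as the paper: the paper's justification (in the proof of Proposition~\ref{prop:alpha.case}) is precisely that $y_1$ is a cut vertex separating $P^1$ and $P^2$ from the core, which together with the degree-$2$ constraint on induced paths forces $V(P)=V(P^1)\cup V(P^2)\cup\{y_1\}$. The only cosmetic difference is that where the paper cites Lemma~\ref{lemma:partition} to guarantee that $Q$ meets the core, you rule out the degenerate case $V(Q)\subseteq V(P^i)$ directly by counting neighbors of $y_1$, which is a fine self-contained substitute.
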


\begin{figure}
    \centering
    \includegraphics[scale=.8]{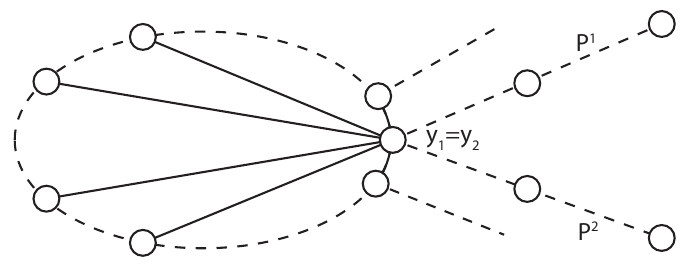}
    \caption{For $r\geq 3$, $p_1,p_2\geq 1$, $p_3,p_4\geq 0$, let $G(r,p_1,p_2,p_3,p_4)$ denote the set of graph of two parallel paths with an outer-cycle $\{c_0,\dots,c_{r-1}\}$, possible interior edges $c_0c_i$ for $i\neq r-1,0,1$, and pendent paths of length $p_1,p_2,p_3,p_4$ with insertion points $y_1=y_2=c_0$, $y_3=c_{r-1}$, and $y_4 = c_1$.
    An $\alpha$-graph is a graph that is a member of  some $G(r,p_1,p_2,p_3,p_4)$ with appropriate parameters.}
    \label{fig:alpha}
\end{figure}

\begin{Proposition} \label{prop:spokes}
Suppose $G$ is a graph of two parallel paths partitioned by $P$ and $Q$ with non-empty core $R$. Let $G-R$ be the disjoint union of at most $4$ paths $\{P^i\}_{i=1}^k$ with $k\leq 4$ such that there is exactly one edge $x_iy_i$ where $x_i$ is a leaf of $P^i$
 and $y_i\in R$ are distinct.  
    
If $G[R]$ is not a cycle, then 
    \begin{enumerate}
        \item each $y_i$ is contained in an induced cycle of $G$ with exactly one interior edge,
        \item there is an assignment $\phi:\{y_i\}_{i=1}^k\to\{ C^1,C^2\}$ so that $|\phi^{-1}(C^i)|\leq 2$, $y_i\in \phi(y_i)$,  $\phi(y_i)=\phi(y_j)$ only if $y_i$ and $y_j$ are consecutive on the outer-cycle of $G[R]$. 
    \end{enumerate}
    
    Moreover, if $G[R]$ is a cycle and $k\geq 3$, then  $G[\{y_i\}_{i=1}^k]$ induces one of the following subgraphs: $K_2\cup K_1$, $2K_2$, $P_3$, $P_4$, $C_3$, or $C_4$. 
\end{Proposition}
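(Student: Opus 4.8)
The plan is to reduce everything to a single structural fact: each insertion point $y_i$ is an endpoint of the core-interval $P[V(P)\cap V(R)]$ or $Q[V(Q)\cap V(R)]$. To establish this, fix a pendant path $P^i$. By Lemma~\ref{lem:insertion.points} it attaches to $R$ through the unique edge $x_iy_i$, the graph $G[V(P^i)\cup\{y_i\}]$ is a path, and $P^i$ contains a pendant $\ell_i$ of $G$. Every internal vertex of $P^i$, as well as $x_i$, has degree $2$ in $G$, since a second neighbor in $R$ would give a second edge from the component $P^i$ to $R$. As $\ell_i$ has degree $1$, it is an endpoint of whichever partition path contains it, say $P$; the forced traversal $\ell_i\to\cdots\to x_i\to y_i$ then shows $y_i$ is the first vertex of $P$ lying in $R$. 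Because a component of $G-R$ meets $R$ in only one edge, $V(P)\cap V(R)$ is a contiguous block of $P$, so $y_i$ is one of its two endpoints, hence (by Lemma~\ref{lemma:partition}) an arc-endpoint of the outer-cycle. Writing $C=c_0\cdots c_{r-1}c_0$ with $V(P)\cap V(R)=c_0\cdots c_p$ and $V(Q)\cap V(R)=c_{p+1}\cdots c_{r-1}$, the only candidates are the four arc-endpoints $c_0,c_p$ (from $P$) and $c_{p+1},c_{r-1}$ (from $Q$), which form the two cyclically consecutive transition pairs $\{c_{r-1},c_0\}$ and $\{c_p,c_{p+1}\}$. (Distinctness of the $y_i$ is exactly what keeps us out of the $\alpha$-graph case of Proposition~\ref{prop:alpha.case}.)

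For statements (1) and (2), where $G[R]$ is not a cycle, Lemma~\ref{lemma:endface} gives exactly two induced cycles $C^1,C^2$ with a single interior edge, so (1) is the assertion $\{y_i\}\subseteq V(C^1)\cup V(C^2)$. By Lemma~\ref{lem:insertion} each $C^j$ contains one endpoint of $P\cap R$ and one of $Q\cap R$. The key sub-claim is that these two endpoints are consecutive on $C$: an end-face is an arc of $C$ closed by one chord, and an arc of $C$ containing exactly one $P$-endpoint and exactly one $Q$-endpoint must straddle exactly one transition, i.e.\ contain a consecutive pair $\{c_{r-1},c_0\}$ or $\{c_p,c_{p+1}\}$, because any arc capturing a non-consecutive pair such as $c_0$ and $c_{p+1}$ necessarily swallows a second endpoint. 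Hence, up to relabeling, $\{c_{r-1},c_0\}\subseteq V(C^1)$ and $\{c_p,c_{p+1}\}\subseteq V(C^2)$, which gives (1). For (2) I would set $\phi(y_i)=C^1$ if $y_i\in\{c_0,c_{r-1}\}$ and $\phi(y_i)=C^2$ if $y_i\in\{c_p,c_{p+1}\}$; then $y_i\in\phi(y_i)$, each fiber is contained in one transition pair so has size at most $2$, and two points with the same image share a transition pair and are therefore consecutive on $C$.

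For the ``moreover'' clause, $G[R]=C$ has no interior edges, so $G[\{y_i\}_{i=1}^k]$ is simply the subgraph of the cycle $C$ induced on $k\ge 3$ of the four vertices $\{c_0,c_p,c_{p+1},c_{r-1}\}$. Any induced subgraph of a cycle is a disjoint union of subpaths (or the whole cycle when all its vertices are taken), and since the chosen set always contains at least one full consecutive pair it is never edgeless. A short case analysis on $k\in\{3,4\}$, accounting for the possible coincidences and extra adjacencies ($p=0$, $p=r-2$, $p=1$, $p=r-3$, and small $r$), then yields exactly $K_2\cup K_1$, $P_3$, or $C_3$ when $k=3$, and $2K_2$, $P_4$, or $C_4$ when $k=4$, matching the stated list; no other graph (such as $P_3\cup K_1$ or $3K_1$) can occur because the four vertices always split into the two consecutive transition pairs.

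I expect the main obstacle to be the consecutiveness sub-claim in the second paragraph, namely pinning down precisely which two arc-endpoints lie in each end-face $C^j$ and verifying they are adjacent on $C$; this is the one place where the non-crossing chord structure must be combined carefully with Lemma~\ref{lem:insertion}. Once that is secured, the construction of $\phi$ and the cycle-case enumeration are routine bookkeeping.
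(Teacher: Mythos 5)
Your proposal is correct and, at its core, follows the same route as the paper: the paper's own proof of Proposition~\ref{prop:spokes} consists essentially of the single remark that Claims 1 and 2 follow from Lemma~\ref{lem:insertion} (with the cycle case ``following similarly''), and what you have written is precisely the derivation that remark gestures at --- each insertion point is an endpoint of one of the core intervals $P[V(P)\cap V(R)]$, $Q[V(Q)\cap V(R)]$ (via Lemma~\ref{lem:insertion.points} and Lemma~\ref{lemma:partition}), and these four arc-endpoints are then located inside $C^1$ and $C^2$ using Lemma~\ref{lemma:endface} and Lemma~\ref{lem:insertion}. Your handling of the case where $G[R]$ is a cycle likewise matches the paper's implicit argument, and your enumeration for $k\in\{3,4\}$ is the correct bookkeeping.

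One step should be tightened, and it is exactly the one you flagged as the main obstacle: the conclusion ``hence, up to relabeling, $\{c_{r-1},c_0\}\subseteq V(C^1)$ and $\{c_p,c_{p+1}\}\subseteq V(C^2)$.'' What you have actually shown at that point is that each of $C^1$, $C^2$ contains at least one full transition pair; you have not excluded the possibility that both cycles contain the \emph{same} pair (say $\{c_{r-1},c_0\}$) while neither contains $c_p$ or $c_{p+1}$, in which case an insertion point at $c_p$ would escape $V(C^1)\cup V(C^2)$ and Claim 1 would fail. The patch is short and uses only facts already established in the paper: if the arcs of both $C^1$ and $C^2$ contained the outer-cycle edge $c_{r-1}c_0$, then, since interior edges do not cross, the two arcs would be nested, and the interior edge (chord) of the inner cycle would join two vertices of the outer cycle without being one of its edges, contradicting that the outer one is an \emph{induced} cycle with exactly one interior edge. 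Hence $C^1$ and $C^2$ capture distinct transition pairs, all four arc-endpoints are covered, and your construction of $\phi$ goes through as written.
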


\begin{proof}
    We will proceed with the assumption that $G[R]$ is not a cycle. 
    Let $C=c_0c_1\dots c_{r-1}c_0$ denote the outer-cycle of $G$. 
    Claims 1. and 2. follow from Lemma~\ref{lem:insertion}. 

    The case when $G[R]$ is a cycle follows similarly. 
\end{proof}

We conclude this section with a characterization of graphs of parallel paths $P$ and $Q$ with non-empty cores.

\begin{Theorem} \label{thm:non-emptycore}
    Suppose $G$ is a connected graph with a cycle. 
    Then $G$ is a graph of two parallel paths if and only if $G$ is outer-planar with non-empty core $R$ such that 
    \begin{enumerate}
        \item[1.] the boundary of the unbounded face of $G[R]$ is a Hamiltonian cycle,
        \item[2.] every induced cycle of $G$ has at most $2$ interior edges,
        \item[3.] if $G[R]$ is not a cycle, then exactly two induced cycles have exactly one interior edge, and
        \item[4.] $G-R$ satisfies the conditions of Proposition~\ref{prop:spokes} or $G$ is an $\alpha$-graph;
    \end{enumerate}
\end{Theorem}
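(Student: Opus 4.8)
The plan is to prove the two directions separately, with the forward direction essentially an assembly of the section's lemmas and the backward direction requiring an explicit construction of the two paths.

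For the forward direction (assuming $G$ is a graph of two parallel paths), outer-planarity and the non-emptiness of $R$ were already observed when the family was defined. Condition~1 is exactly Proposition~\ref{prop:hamiltoniancycle}; condition~2 is Lemma~\ref{lemma:interiorface}; condition~3 is Lemma~\ref{lemma:endface}. For condition~4 I would invoke Lemma~\ref{lem:insertion.points} to see that $G-R$ is a disjoint union of at most four paths with insertion points $y_1,\dots,y_k$, and then split on whether these insertion points are distinct: if two coincide, Proposition~\ref{prop:alpha.case} together with Observation~\ref{obs:alpha.unique} forces $G$ to be an $\alpha$-graph; otherwise the insertion points are distinct and Proposition~\ref{prop:spokes} supplies the required assignment $\phi$. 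Thus the forward direction is routine bookkeeping over results already in hand.

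The substance is the backward direction: given an outer-planar $G$ with non-empty core $R$ satisfying 1--4, I must construct induced paths $P$ and $Q$ realizing the parallel-paths structure. I would first analyze the core. Since condition~1 gives the Hamiltonian outer-cycle $C=c_0c_1\cdots c_{r-1}c_0$ with no cut vertex, $G[R]$ is $2$-connected outer-planar, so its bounded faces form a tree under edge-adjacency whose tree-edges are the interior edges. Condition~2 says every induced cycle, and in particular every bounded face, borders at most two interior edges, so every node of this tree has degree at most two; hence the tree is a path, and condition~3 confirms that its two leaf faces are exactly $C^1$ and $C^2$. This ``stacked-polygon'' shape is precisely what splits into two parallel arcs: I would choose one cut vertex in the exclusive boundary of $C^1$ and one in that of $C^2$, splitting $C$ into two consecutive arcs, and then use the non-crossing (nestedness) of the interior edges to force every interior edge to have one endpoint in each arc. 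Consequently each arc is an induced path (no interior edge lies inside it) and no two interior edges cross, which is exactly the ordering condition; when $G[R]$ is a cycle there are no interior edges and any split of $C$ into two arcs works, guided by the structure of $G[\{y_i\}_{i=1}^k]$ from Proposition~\ref{prop:spokes}.

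Finally I would attach the pendant paths. In the $\alpha$-graph case the partition is already forced by Observation~\ref{obs:alpha.unique}, with $V(P^1)\cup V(P^2)\cup\{y_1\}=V(P)$ and the remainder $Q$, so I need only verify directly from Figure~\ref{fig:alpha} that $P$ and $Q$ are induced paths meeting the ordering condition. In the distinct-insertion-point case, Proposition~\ref{prop:spokes} places each $y_i$ in one of the end faces $C^1,C^2$ with at most two per face and consecutive on $C$; since the two cut vertices were chosen in exactly these end faces, each $y_i$ is an endpoint of $P[V(P)\cap V(R)]$ or $Q[V(Q)\cap V(R)]$, so the pendant path $P^i$ can be appended at that endpoint to extend $P$ or $Q$ without introducing a chord. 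I expect the main obstacle to be this last verification: simultaneously guaranteeing that the two chosen cut vertices straddle all interior edges and that every pendant path can be appended at a genuine path-endpoint while preserving both the induced-path property and the non-crossing ordering. This is where the interplay of conditions~2 and~3 with the $\phi$-assignment of Proposition~\ref{prop:spokes} must be used carefully, and where a clean case analysis on the sizes $|\phi^{-1}(C^i)|$ and on whether $G[R]$ is a cycle is needed.
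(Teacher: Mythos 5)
Your proposal is correct and takes essentially the same approach as the paper: the forward direction assembles the identical results (Proposition~\ref{prop:hamiltoniancycle}, Lemmas~\ref{lemma:interiorface} and~\ref{lemma:endface}, Propositions~\ref{prop:alpha.case} and~\ref{prop:spokes}), and the backward direction builds $P$ and $Q$ by splitting the outer-cycle into two arcs anchored at the end cycles $C^1,C^2$ and appending the pendant paths at arc endpoints as dictated by the $\phi$-assignment, which is the paper's construction up to the cosmetic difference that the paper cuts at the interior edges of $C^1$ and $C^2$ and absorbs their remaining boundary vertices in the extension step, while you cut inside the exclusive boundaries directly. Your stacked-polygon observation (the weak dual of $G[R]$ is a path) and your explicitly flagged final verification are, if anything, more detailed than the paper's own one-line claim that $P'$ and $Q'$ ``can be extended to induced paths that partition $G$ using the conditions on $G-R$.''
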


\begin{proof}
For the forward direction, notice that 1.- 4. are proven by Proposition~\ref{prop:hamiltoniancycle}, Lemma~\ref{lemma:interiorface}, Lemma~\ref{lemma:endface}, Proposition~\ref{prop:alpha.case} and Proposition~\ref{prop:spokes} respectively. 

We will now focus on constructing a pair of induced paths $P$ and $Q$ in $G$ which will show that $G$ is a graph of two parallel paths. 
First, if 1.-4. hold and $G$ is an $\alpha$-graph, then $P = P^1\cup\{c_0\}\cup P^2$ and $Q= G-P$ gives a path partition of $G$.

Assume that $G[R]$ is not a cycle.
Let $C=c_0c_1\dots c_{r-1}c_0$ denote the outer-cycle of $G$. 
Let $C^1= c_0\dots c_xc_0$ and $C^2=c_y\dots c_zc_y$ be the two induced cycles with exactly one interior edge where $0<x\leq y <z\leq r$. 
Notice that any interior edge $c_uc_v$ has $x\leq u\leq y$ and $z\leq v\leq r$.
Let $P' = G[\{c_x, \dots, c_y\}]$ and $Q'=G[\{c_z,\dots, c_{r-1},c_0\}]$, which are both induced paths. 
Notice that $P'$ and $Q'$ can be extended to induced paths that partition $G$ using the conditions on $G-R$. 
In particular, each $y_i$ is a vertex on $\phi(y_i)$, informing which end of $P'$ or $Q'$ is extended to include $y_i$ and $P^i$.  
The case when $G[R]$ is a cycle follows similarly. 
This concludes the proof.
\end{proof}

\subsection{Applying characterizations of graphs of two parallel paths}

Suppose $G$ is a tree of two parallel paths.
The characterization theorems let us determine the sets $\N$ for which  $\M(G^{\N})=2$. 
We only need to consider the case when $|\N|=2$, since otherwise, we have $\M(G^{\N})\geq 3$ by Proposition~\ref{prop.gammabound}.
Notice that  $G^{\N}$ contains two graphs, namely, $G$ and a unicyclic graph $G^+$, which is the result of adding edges to $G$ within the set $\N$.
Since $G$ is a graph of two parallel paths by assumption, we must consider the possibilities for $G^+$. 
Notice that $G^+$ is not a graph of the forms featured in Figure~\ref{SpecialGraphswithM2labeled} since these graphs contain too many pendants or too many cycles.
The other possibility is that $G^+$ is a graph of two parallel paths.
Since $G^+$ is unicyclic, it will be a graph of two parallel paths if and only if it satisfies the conditions of Theorem~\ref{thm:non-emptycore}. 
Clearly, $G^+$ is outer-planar with a non-empty core $R$ such that $G[R]$ is a cycle and, therefore, has a Hamiltonian cycle. 
Since $G^+$ is unicyclic, it does not have interior edges, which render Conditions 2. and 3. vacuously satisfied. 
However, care needs to be taken for $G^+$ to satisfy Condition 4.

\begin{Proposition}\label{prop:tree.uniquness}
Suppose $G$ is a tree of two parallel paths, and $S$ is the subset of vertices $G$ with degree $3$. 
Let $\N$ be a pair of non-adjacent vertices in $G$. 
Then $G^+$ is a graph of two parallel paths if and only if $|S|\leq 1$ or $\N$ contains vertices from two different components of $G-e$ where $e$ is the edge between vertices of degree $3$ (and $\N\neq S$). 
\end{Proposition}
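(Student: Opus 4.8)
The plan is to reduce the whole question to a single condition via Theorem~\ref{thm:non-emptycore}. Since $\N$ is a non-adjacent pair $\{u,v\}$, the graph $G^+$ is unicyclic: its unique cycle $C$ consists of the tree-path between $u$ and $v$ together with the added edge $uv$, and the core of $G^+$ is exactly $R=V(C)$. Because $G^+[R]$ is literally a cycle with no interior edges, conditions 1.--3.\ of Theorem~\ref{thm:non-emptycore} hold automatically, so $G^+$ is a graph of two parallel paths if and only if condition 4.\ holds. Recall from Theorem~\ref{thm:emptycore} that $|S|\le 2$ and, when $|S|=2$, the two vertices of $S$ are joined by the edge $e$; thus everything reduces to understanding when $G^+-R$ satisfies Proposition~\ref{prop:spokes} or $G^+$ is an $\alpha$-graph.

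The key observation I would isolate is that condition 4.\ forces every degree-$3$ vertex of $G$ to lie on $C$. Indeed, suppose $s\in S$ does not lie on $C$. Since $\N\subseteq R$, we have $s\notin\{u,v\}$ and hence $\deg_{G^+}(s)=3$. By Lemma~\ref{lem:insertion.points}, the component $K$ of $G^+-R$ containing $s$ is a path attached to $R$ by a single edge at a leaf of $K$, and at most one neighbour of $s$ lies in $R$. If $s$ is the attaching vertex, then exactly one neighbour lies in $R$, so $s$ has degree $2$ in $K$, contradicting that the attaching vertex is a leaf; if $s$ is not the attaching vertex, then all three neighbours lie in $K$, contradicting that $K$ is a path. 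Hence $s\in V(C)$. Translating this: if $|S|\le 1$ there is at most one such constraint, while if $|S|=2$, writing $S=\{a,b\}$ with $e=ab$, both $a$ and $b$ lie on $C$ precisely when the $u$--$v$ path uses $e$, i.e.\ exactly when $\N$ meets both components of $G-e$. The side condition $\N\neq S$ is automatic here, since $\N$ is non-adjacent while $S$ induces the edge $e$.

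For the converse I would verify condition 4.\ directly under the assumption that $S\subseteq V(C)$. Every vertex of $G^+-R$ then has degree at most $2$, so each component is a path; moreover the vertex of each component adjacent to $R$ has degree at most $2$ in $G^+$ and hence degree $1$ in its component, so the attachment occurs at a leaf, as required by Lemma~\ref{lem:insertion.points}. It remains to control the insertion points. If no vertex of $C$ carries two pendant paths, the insertion points are distinct, and I would check from the cyclic arrangement $u,\dots,a,b,\dots,v$ forced by the construction of $C$ that the subgraph they induce is always one of $K_2\cup K_1$, $2K_2$, $P_3$, $P_4$, $C_3$, or $C_4$, matching the list in Proposition~\ref{prop:spokes}. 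If instead some cycle vertex carries two pendant paths, it must have degree $4$ in $G^+$, which can only occur at a vertex of $\N$ that already had degree $3$ in $G$; in that case Proposition~\ref{prop:alpha.case} identifies $G^+$ as an $\alpha$-graph. Either branch confirms condition 4., so Theorem~\ref{thm:non-emptycore} gives that $G^+$ is a graph of two parallel paths.

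The main obstacle is the sufficiency bookkeeping: proving that the distinct insertion points always induce one of the admissible subgraphs (equivalently, exhibiting an explicit partition into two induced paths by assigning each arc of $C$ and each pendant to $P$ or $Q$), and cleanly separating off the repeated-insertion-point case that yields an $\alpha$-graph. I also expect the genuine delicacy to live in the $|S|=1$ analysis: the necessity argument above shows that the unique degree-$3$ vertex must itself lie on $C$, i.e.\ the $u$--$v$ path must pass through it, which is the very same straddling phenomenon as for $|S|=2$; this is precisely where the placement hypotheses must be stated with the most care.
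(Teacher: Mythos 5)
Your reduction to Condition 4 of Theorem~\ref{thm:non-emptycore} and your necessity argument are exactly the paper's strategy: the paper likewise notes that $G^+$ is unicyclic with core $R=V(C)$, that Conditions 1--3 of Theorem~\ref{thm:non-emptycore} hold vacuously, and, in its forward direction, that all of $S$ must lie in $R$ because every vertex of $G^+-R$ must have degree at most two. So in approach you and the paper agree, and your necessity half is if anything more carefully argued than the paper's.

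The crucial point is your closing paragraph: the ``delicacy'' you flag in the $|S|=1$ case is not a matter of phrasing but an actual error in the stated proposition, and your own necessity argument proves it. You show that $G^+$ can be a graph of two parallel paths only if \emph{every} degree-$3$ vertex of $G$ lies on the cycle $C$; when $|S|=1$ this is a genuine constraint that the hypothesis ``$|S|\le 1$'' does not enforce. Concretely, let $G$ be the spider with center $s$ and legs $a_1a_2a_3$, $b$, $c$ (so $S=\{s\}$, and $G$ is a tree of two parallel paths by Theorem~\ref{thm:emptycore}), and take $\N=\{a_1,a_3\}$. Then $G^+$ is the triangle $a_1a_2a_3$ with the path $b\,s\,c$ attached to $a_1$ through its \emph{middle} vertex $s$: the unique component $\{b,s,c\}$ of $G^+-R$ meets the core at a non-leaf, violating Lemma~\ref{lem:insertion.points} (and an exhaustive check confirms no partition of $G^+$ into two induced paths exists). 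So the backward direction of the proposition fails for $|S|=1$; the paper's own proof dismisses this case ``by inspection,'' incorrectly. This means that, read literally, your sufficiency step has a gap---you assume $S\subseteq V(C)$, which does not follow from ``$|S|\le 1$''---but that gap cannot be filled, because the literal statement is false. What your argument actually proves is the corrected statement: $G^+$ is a graph of two parallel paths if and only if each $s\in S$ either belongs to $\N$ or separates the two vertices of $\N$ in $G$; for $|S|=2$ this coincides with the straddling condition on $e$ (with $\N\neq S$ automatic, as you note), but for $|S|=1$ it is strictly stronger than what the proposition asserts. Your remaining bookkeeping (insertion points lie in $\{u,v\}\cup S$, their induced subgraph is on the list in Proposition~\ref{prop:spokes}, and a repeated insertion point forces the $\alpha$-graph configuration) is sound under that corrected hypothesis.
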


\begin{proof}
    First, consider the backward direction. 
    If $S$ is empty, then $G$ is a path. 
    In this case, $G^+$ is a graph of two parallel paths by inspection. 
    Furthermore, if $|S|=1$ and $G$ is a tree with one vertex of degree $3$, then $G^+$ is also a graph of parallel paths by inspection. 
    On the other hand, if $S$ is not empty, then we have that $\N$ contains vertices from two different components of $G-e$. 
    This implies that $S$ is contained in the cycle of $G^+$. 
    Thus, $G[R]$ is a cycle, and $G[R]$ contains at most $4$ components satisfying Condition 4. of Theorem~\ref{thm:non-emptycore}.

    Let us now turn to the forward direction. 
    If $|S|\leq 1$, then we are done. 
    Therefore, we will assume that $|S|\geq 2$. 
    By Theorem~\ref{thm:emptycore}, $G-S$ has 2 components. 
    By Theorem~\ref{thm:non-emptycore}, $G^+$ is a graph of two parallel paths only if $G-R$ is a disjoint union of at most $4$ paths for which all vertices in $G-R$ have degree at most two in $G$. 
    This implies that $R$ must contain $S$, which is to say, all vertices in $S$ must be contained in the cycle created by adding the edge between the vertices in $\N$ to $G$.
    This is only possible if $\N$ contains at least one vertex from two different components of $G-e$.
\end{proof}

If $G^+$ is a graph of two parallel paths, then $G^+$ must be outer-planar. 
This alone already restricts the possibilities of $\N$. 
Let $d^*(x,y)$ in $G$ be the number of edges in the Hamilton cycle of $R$ on a shortest path between $x$ and $y$, which does not use interior edges. Note that the path used to compute $d^*(x,y)$ may contain edges in a pendant path, but these edges do not contribute to the total value.

Recall that the outer-cycle $C$ is unique and fixes a cyclic ordering of the vertices of $R$ up to an orientation of the cycle. 
Suppose $c_0,\dots, c_{r-1},c_0$ is an orientation of $C$. 
Let $C[x,y]$ be the set of vertices that succeed $x$ and precede $y$ in the cyclic ordering (including $x,y$). 
With this notation, $C[y,x]$ is the set of vertices that precede $x$ and succeed $y$. 
Furthermore, we will use $C(x,y)$ to denote the set of vertices that strictly succeed $x$ and strictly precede $y$. 

\begin{Lemma}\label{lem:outerplanar.conditions}
    Let $\N=\{x,y\}$ and $G$ be a graph of two parallel paths with non-empty core $R$. 
    The graph $G^+$ is outer-planar if and only if either 
    \begin{enumerate}
        \item $\N\subseteq R$ such that no interior edge of $G$ contains one endpoint in $C(x,y)$ and the other endpoint in $C(y,x)$; or
        \item without loss of generality, $x$ is on a pendant path $P^i$ and $d^*(x,y) \leq  1$. 
    \end{enumerate}
\end{Lemma}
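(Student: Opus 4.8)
The plan is to prove both directions using the classical fact that a graph is outer-planar if and only if it contains no $K_4$ minor and no $K_{2,3}$ minor, supplemented by explicit embeddings. Throughout I fix the combinatorially unique outer-planar drawing of $G$ in which the outer-cycle $C = c_0\cdots c_{r-1}c_0$ bounds a convex polygon, the interior edges are drawn as pairwise non-crossing chords inside it, and each pendant path of $G-R$ is drawn in the exterior in a small sector near its insertion point; note that every vertex of $G$ then lies on the unbounded face. Since $\N = \{x,y\}$ is independent, $G^+ = G + xy$, and I split on whether $\{x,y\} \subseteq R$, which is exactly the split between the two conditions.

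For necessity I would show that violating both conditions forces a forbidden minor. If $x,y \in R$ but some interior edge $uv$ has $u \in C(x,y)$ and $v \in C(y,x)$, then $x,u,y,v$ occur in this cyclic order on $C$, so the four arcs of $C$ they cut out, contracted to single edges, together with the two chords $xy$ and $uv$, realize $K_4$; hence $G^+$ is not outer-planar, proving condition~1 necessary in this case. If instead, without loss of generality, $x$ lies on a pendant path $P^i$ with insertion point $c_a$ and $d^*(x,y) \ge 2$, let $c_b$ be the anchor of $y$ (so $c_b = y$ if $y \in R$, otherwise $c_b$ is the insertion point of $y$'s pendant path). Then $d^*(x,y) \ge 2$ forces both arcs of $C$ between $c_a$ and $c_b$ to have length at least two, giving two internally disjoint $c_a$--$c_b$ paths, each with an internal vertex; a third such path runs from $c_a$ out along $P^i$ to $x$, across the new edge $xy$, and (if needed) along $y$'s pendant path to $c_b$, and it is internally disjoint from the arcs since its internal vertices lie off $C$. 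These three paths form a subdivision of $K_{2,3}$ with branch vertices $c_a,c_b$, so $G^+$ is not outer-planar, proving $d^*(x,y) \le 1$ necessary.

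For sufficiency I would extend the fixed embedding of $G$ by drawing $xy$. Under condition~1 I draw $xy$ as a chord inside the polygon; the hypothesis that no interior edge separates $C(x,y)$ from $C(y,x)$ says precisely that this chord crosses none of the existing interior chords, so the drawing stays planar, and since only an interior chord was added, every vertex remains on the unbounded face. Under condition~2 the new edge interacts only locally: because $d^*(x,y) \le 1$, its endpoints are anchored at a single outer-cycle vertex $c_a$ (when $d^* = 0$) or at two adjacent vertices $c_a, c_{a+1}$ (when $d^* = 1$), so the cycle created by $xy$ uses at most the single edge $c_ac_{a+1}$ of $C$ and otherwise stays among pendant-path vertices. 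I can therefore route $xy$ entirely through the exterior, in the sector near $c_a$ (and over the edge $c_ac_{a+1}$), without entering the polygon; choosing the circular order of the pendant paths at $c_a$ so that the path carrying $x$ sits outermost toward the relevant side lets me insert $xy$ as a single non-crossing ear, keeping all vertices on the unbounded face.

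The hard part will be the sufficiency argument for condition~2 when several pendant paths share the insertion point $c_a$, or when $x$ is an \emph{interior} vertex of $P^i$ so that the path continues past $x$; one must check the ear can still be drawn without crossings. The clean way to handle this is to note that since $d^*(x,y) \le 1$ the new cycle is confined to the region near $c_a$ (and $c_ac_{a+1}$), which is disjoint from all interior chords and from every other pendant path, so those features are unaffected and the local rerouting always succeeds. With the embedding produced in each case, the Chartrand--Harary criterion confirms that $G^+$ is outer-planar, completing the equivalence.
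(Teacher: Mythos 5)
Your proof is correct, and at its core it follows the same strategy as the paper's: an explicit extension of the outer-planar embedding for sufficiency, and a $K_{2,3}$ obstruction for necessity when an endpoint lies on a pendant path. Two differences are worth recording. First, for the case $\N\subseteq R$ with an interleaving interior edge, you exhibit a $K_4$ subdivision (branch vertices $x,u,y,v$, the four arcs of $C$ as subdivided edges, chords $xy$ and $uv$), whereas the paper argues that in the combinatorially unique outer-planar embedding of the core the new chord must cross $uv$; your version is self-contained and does not lean on the uniqueness discussion preceding the lemma. Second, and more importantly, you run the $K_{2,3}$ construction under the hypothesis $d^*(x,y)\geq 2$, which is exactly what the ``only if'' direction requires (condition 2 asserts $d^*(x,y)\leq 1$), while the paper's proof only writes the construction for $d^*(x,y)\geq 3$ and never addresses $d^*(x,y)=2$. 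That case does need covering, and your argument handles it: if $d^*(x,y)=2$, both arcs between the anchor $c_a$ of $x$ and the anchor $c_b$ of $y$ contain an internal vertex (for instance, with a square core, pendant $x$ at $c_0$, and $y=c_2$, the graph $G^+$ is literally $K_{2,3}$), so the three internally disjoint $c_a$--$c_b$ paths you describe exist; in this respect your write-up closes a small gap in the paper's proof. The only soft spot in your argument is the sufficiency of condition 2 when several pendant paths share an insertion point or when the paths continue past $x$ or $y$; your ``route the new edge as an exterior ear near $c_a$'' argument is sound, and it is in any case no less rigorous than the paper's one-line claim that the same embedding works because $xy$ does not separate any vertex from the unbounded face.
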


\begin{proof}
    Suppose $\N\subseteq R$. 
    Recall that the core $R$ has a unique Hamilton cycle whose edges form the boundary of the outer-face of $R$. 
    Therefore, $R^+$ is outer-planar if and only if the new edge $xy$ does not cross any existing edge, which is to say that $G$ does not contain an interior edge with one endpoint in $C(x,y)$ and one endpoint in $C(y,x)$. 

    Now suppose $x\in P^i$ and $d^*(x,y)\leq 1$. 
    Notice that $G^+$ is still outer-planar using the same embedding that is used for $G$ (as depicted in Figure~\ref{fig:outerplanner}) since the new edge $xy$ does not separate any vertex from the unbounded face. 

    On the other hand, if $d^*(x,y)\geq 3$, then we obtain a $K_{2,3}$ minor where $\{y,y^i\}$ is the part of size $2$, and the part of size $3$ is obtained using $x$ and a vertex from each path from $y$ to $y^i$ on the Hamilton cycle of $R$.
\end{proof}

\begin{figure}
    \centering
    \includegraphics[scale=.8]{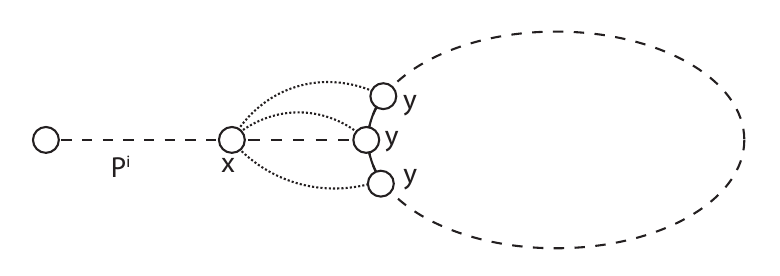}
    \caption{A graph of two parallel paths with the vertex $x$ and three potential $y$ vertices. The dotted edges are potential edges $xy$.}
    \label{fig:outerplanner}
\end{figure}

We now observe a necessary condition for $G^{+}$ to be a graph of two parallel paths, which will be useful for the following characterizations.
 
\begin{Observation}\label{obs:core.conditions}
Let $\N=\{x,y\}$.
If the graph $G^+$ is a graph of two parallel paths, then  $G[R]^+$ contains at most two induced cycles with exactly one interior edge, and every induced cycle has at most 2 interior edges, Condition 2 in Proposition~\ref{prop:spokes} holds for $G^+$, and if $x$ is on a pendant path then $d^*(x,y)=1$.  
\end{Observation}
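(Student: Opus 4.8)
The plan is to read off the first three conclusions directly from the characterization Theorem~\ref{thm:non-emptycore} applied to $G^+$, and to obtain the last conclusion from the outer-planarity criterion of Lemma~\ref{lem:outerplanar.conditions} together with a short cut-vertex argument. First I would check that Theorem~\ref{thm:non-emptycore} applies to $G^+$: since $G^+$ is obtained from $G$ by adding the single edge $xy$, it is connected, and it still contains the outer-cycle $C$ of the non-empty core $R$ of $G$, so it is a connected graph with a cycle. As $G^+$ is assumed to be a graph of two parallel paths, the forward direction of Theorem~\ref{thm:non-emptycore} tells us that $G^+$ is outer-planar with a non-empty core satisfying Conditions 1--4.

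Conclusions 1, 2, and 3 of the Observation are then precisely Conditions 3, 2, and the Proposition~\ref{prop:spokes} part of Condition 4 of Theorem~\ref{thm:non-emptycore}, now stated for $G^+$: ``every induced cycle has at most two interior edges'' is Condition 2; ``at most two induced cycles have exactly one interior edge'' follows from Condition 3, which gives \emph{exactly} two such cycles when the core is not a cycle and none when it is; and ``Condition 2 in Proposition~\ref{prop:spokes} holds for $G^+$'' is part of Condition 4. Here I would note that when $\N\subseteq R$ the core of $G^+$ is again $R$, so $G[R]^+$ is literally its core subgraph, whereas if $x$ lies on a pendant path the core of $G^+$ is the natural enlargement of $R$ and the interior-edge conclusions are inherited from $G$ unchanged.

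The substantive step, and the one I expect to require the most care, is the final claim that $d^*(x,y)=1$ when $x$ lies on a pendant path. Outer-planarity of $G^+$ already gives $d^*(x,y)\le 1$ by the second case of Lemma~\ref{lem:outerplanar.conditions}, so the remaining task is to exclude $d^*(x,y)=0$. I would argue by contradiction: if $d^*(x,y)=0$, then a shortest $x$--$y$ path avoiding interior edges uses no edge of the Hamilton cycle $C$, which forces $y$ to lie on the same pendant path as $x$, or to be the insertion point $y^i$ itself, or (in the $\alpha$-graph case) to lie on the pendant path sharing the insertion point of $P^i$. In every case the new edge $xy$ closes a cycle $C'$ that meets the outer-cycle $C$ in at most the single vertex $y^i$, so $C'$ and $C$ together exhibit a cut vertex of the core of $G^+$. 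This contradicts Proposition~\ref{prop:hamiltoniancycle}, which guarantees that the core of a graph of two parallel paths is bounded by a Hamiltonian cycle and hence has no cut vertex. The contradiction forces $d^*(x,y)=1$, finishing the proof.
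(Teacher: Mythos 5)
Your proposal is correct and takes the same route the paper intends: the statement appears as an unproved Observation, and its justification is exactly what you write --- apply the forward direction of Theorem~\ref{thm:non-emptycore} to the connected graph $G^+$ to read off the interior-edge and insertion-point conditions (with the core of $G^+$ equal to $R$ when $\N\subseteq R$, and the conclusions inherited from $G$ otherwise), and invoke Lemma~\ref{lem:outerplanar.conditions} for $d^*(x,y)\le 1$. Your exclusion of $d^*(x,y)=0$, via a cut vertex (or disconnection) in the core of $G^+$ contradicting the Hamiltonian outer-cycle guaranteed by Proposition~\ref{prop:hamiltoniancycle}, is precisely the argument the paper itself uses for this point in the proof of Proposition~\ref{prop:alpha.mess}.
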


Given that $G$ is a graph of two parallel paths with a non-empty core, we can characterize when $G^{+}$ is a graph of two parallel paths as well. Our characterization is broken into pieces, depending on if $G$ has distinct insertion points, whether or not $G[R]$ is a cycle, and on the location of $\N$. First, we consider the case where $G$ has distinct insertion points, $G[R]$ is not a cycle, and $\N$ is contained in the core $R$.

\begin{Proposition}\label{prop:mess1}
Suppose $G$ is a graph of two parallel paths with a non-empty core $R$ such that $G[R]$ is not a cycle and $G$ has distinct insertion points. 
Let $P^i$ be the pendant paths of $G$ with insertion points $y_i$.
Let $\N=\{x,y\}$ be a pair of non-adjacent core vertices in $G$. 
Then $G^+$ is a graph of two parallel paths if and only if 
\begin{enumerate}
    \item no interior edge of $G$ contains one endpoint in $C(x,y)$ and the other endpoint in $C(y,x)$; 
    \item $G[R]^+$ contains at most two induced cycles $C^1, C^2$ with exactly one interior edge and every induced cycle as at most two interior edges; and 
    \item there is an assignment $\phi:\{y_i\}_{i=1}^k\to\{ C^1,C^2\}$ so that $|\phi^{-1}(C^i)|\leq 2$, $y_i\in \phi(y_i)$,  $\phi(y_i)=\phi(y_j)$ only if $y_i$ and $y_j$ are consecutive on the outer-cycle of $G[R]^+$.
\end{enumerate}
\end{Proposition}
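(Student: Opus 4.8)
The plan is to prove the proposition by showing that Conditions~1--3 are precisely the conditions of Theorem~\ref{thm:non-emptycore} specialized to the augmented graph $G^+$; since $G^+$ is connected and contains a cycle, Theorem~\ref{thm:non-emptycore} then certifies exactly when $G^+$ is a graph of two parallel paths. The first reduction I would make is to pin down the core of $G^+$. Because $\N=\{x,y\}\subseteq R$, the only new edge joins two vertices that already lie in $R$, so no vertex outside $R$ is placed on a new cycle; hence the core of $G^+$ is again $R$ (non-empty), the pendant paths $P^i$ and their insertion points $y_i$ are unchanged, and in particular $G^+$ still has distinct insertion points.

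Next I would handle outer-planarity together with the Hamilton-cycle requirement (Condition~1 of Theorem~\ref{thm:non-emptycore}). Since $\N\subseteq R$, the first case of Lemma~\ref{lem:outerplanar.conditions} says $G^+$ is outer-planar if and only if no interior edge of $G$ has one endpoint in $C(x,y)$ and the other in $C(y,x)$, which is exactly Condition~1 of the proposition. Once $G^+$ is outer-planar, the chord $xy$ is drawn inside the disk bounded by $C$ without crossings, so $C$ remains the boundary of the unbounded face; thus $G^+[R]$ has the Hamiltonian outer-cycle $C$, giving Condition~1 of Theorem~\ref{thm:non-emptycore}, and $G^+[R]$ is not a cycle since it now carries at least one interior edge.

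For Conditions~2 and~3 of Theorem~\ref{thm:non-emptycore} I would read them off Condition~2 of the proposition. The clause that every induced cycle has at most two interior edges is Condition~2 of the theorem verbatim. The gap to close is that the theorem's Condition~3 demands \emph{exactly} two induced cycles with a single interior edge, while the proposition assumes only \emph{at most} two. Here I would use that $G^+[R]$ is $2$-connected (it has the spanning cycle $C$) and outer-planar, so its weak dual is a tree whose vertices are the bounded faces and whose edges are the interior edges; the bound of two interior edges per induced cycle forces every face to have weak-dual degree at most two, so this tree is a path, and its leaves are precisely the induced cycles with a single interior edge. As $G^+[R]$ is not a cycle it has at least one chord, hence at least two bounded faces, so the path has at least two vertices and exactly two leaves, yielding Condition~3 of the theorem. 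Finally, Condition~4 of the theorem is exactly Condition~3 of the proposition: since $G^+$ has distinct insertion points it is not an $\alpha$-graph, so Condition~4 reduces to the hypotheses of Proposition~\ref{prop:spokes}, whose Claim~2 is verbatim Condition~3 of the proposition, and whose Claim~1 follows because each $y_i\in\phi(y_i)\in\{C^1,C^2\}$ lies on a cycle with one interior edge.

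Assembling these equivalences gives both directions. For the forward direction I would instead invoke Lemma~\ref{lemma:interiorface}, Lemma~\ref{lemma:endface}, and Proposition~\ref{prop:spokes} applied directly to $G^+$ (now known to be a graph of two parallel paths) to recover Conditions~1--3; for the backward direction I would use the reductions above. The main obstacle I anticipate is the weak-dual argument reconciling ``at most two'' with ``exactly two'' end cycles, together with the bookkeeping needed to confirm that passing from $G$ to $G^+$ changes neither the core $R$, the outer-cycle $C$, nor the set of insertion points, so that each structural lemma about graphs of two parallel paths may be applied to $G^+$ without circularity.
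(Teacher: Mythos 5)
Your proposal is correct and takes essentially the same route as the paper: both directions reduce to Theorem~\ref{thm:non-emptycore}, using Lemma~\ref{lem:outerplanar.conditions} (case 1, since $\N\subseteq R$) for outer-planarity and the Hamiltonian outer-cycle, and matching the proposition's Conditions 2--3 with Conditions 2--4 of the theorem via Proposition~\ref{prop:spokes} (the paper's forward direction cites Observation~\ref{obs:core.conditions}, which packages the same structural lemmas you invoke). The one place you go beyond the paper is the weak-dual argument reconciling the proposition's ``at most two'' induced cycles with one interior edge against the theorem's ``exactly two,'' together with the bookkeeping that the core, outer-cycle, and insertion points are unchanged in $G^+$; the paper asserts these implications without detail, so your additions correctly fill in steps it leaves implicit rather than constituting a different approach.
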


\begin{proof}
    If $G^+$ is a graph of two parallel paths, then Statement 1.-3. follow from Lemma~\ref{lem:outerplanar.conditions} and Observation~\ref{obs:core.conditions}.

    Assume that Statement 1.-3. are satisfied by $G^+$.
    By Statement 1. and Lemma~\ref{lem:outerplanar.conditions}, we have that $G^+$ is outer-planar and Condition 1. of Theorem~\ref{thm:non-emptycore} is satisfied by $G^+$. 
    Statements 2. and 3. and the fact that $G$ is outer-planar imply Conditions 2.-4 of Theorem~\ref{thm:non-emptycore} are satisfied. 
    Thus, $G^+$ is a graph of two parallel paths.
\end{proof}

Next, we consider the case where $G$ has distinct insertion points, $G[R]$ is not a cycle, and at least one vertex in $\N$ is not in the core $R$.

\begin{Proposition}\label{prop:mess2}
Suppose $G$ is a graph of two parallel paths with a non-empty core $R$ and distinct insertion points such that $G[R]$ is not a cycle. 
Let $P^i$ be the pendant paths of $G$ with insertion points $y_i$.
Let $\N=\{x,y\}$ be a pair of non-adjacent vertices in $G$ with $x$ on a pendant path $P^1$. 
Then $G^+$ is a graph of two parallel paths if and only if 
\begin{enumerate}
    \item[1a.] $y\in N[P^2]$ where $y_1$ is adjacent to $y_2$ on the outer-cycle of $G$; and 
   \item[2a.] there is an assignment $\phi:\{y_i\}_{i=1}^k\to\{ C^1,C^2\}$ (the cycles with exactly one interior edge in $G$) so that $|\phi^{-1}(C^i)|\leq 2$, $y_i\in \phi(y_i)$,  $\phi(y_i)=\phi(y_j)$ only if $y_i$ and $y_j$ are consecutive on the outer-cycle of $G[R]$, and $\phi(y_1)=\phi(y_2)$; 
\end{enumerate}
or
\begin{enumerate}
    \item[1b.] $y$ is adjacent to $y_1$ on the outer-cycle of $G$; and 
    \item [2b.]  there is an assignment $\phi:\{y_i\}_{i=1}^k\to\{ C^1,C^2\}$ (the cycles with exactly one interior edge in $G$) so that $|\phi^{-1}(C^i)|\leq 2$, $y_i\in \phi(y_i)$,  $\phi(y_i)=\phi(y_j)$ only if $y_i$ and $y_j$ are consecutive on the outer-cycle of $G[R]$, and $|\phi^{-1}(y_1)|=1$.
\end{enumerate}
\end{Proposition}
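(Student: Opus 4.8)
The plan is to prove both directions by applying the characterization in Theorem~\ref{thm:non-emptycore} to $G^+$, after first pinning down exactly how the core changes when the edge $xy$ is added. The necessary direction is the easier one: assuming $G^+$ is a graph of two parallel paths, Observation~\ref{obs:core.conditions} forces $d^*(x,y)=1$ (the pendant case of Lemma~\ref{lem:outerplanar.conditions}), and since $x$ lies on $P^1$ with insertion point $y_1$, the condition $d^*(x,y)=1$ says precisely that $y$ is one core-edge away from $y_1$. This splits into the two cases: either $y$ is a core vertex adjacent to $y_1$ on the outer-cycle (giving 1b), or $y$ lies on a pendant path $P^2$ whose insertion point $y_2$ is adjacent to $y_1$ (giving 1a, i.e.\ $y\in N[P^2]$). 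The remaining assignment conditions 2a/2b are then read off from Conditions~2--4 of Theorem~\ref{thm:non-emptycore} applied to $G^+$, together with Proposition~\ref{prop:spokes}.

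First I would identify the new core $R^+$ and its outer-cycle. Adding $xy$ creates a cycle $D$ that runs along the new edge, out through the pendant path $P^1$ (which is thereby absorbed into the core), and back via one core edge $y_1y$ (case b) or via $P^2$ and the core edge $y_1y_2$ (case a). Consequently $R^+$ is $R$ together with $P^1$ (and, in case a, $P^2$), the old core edge $y_1y$ (resp.\ $y_1y_2$) becomes a new interior edge of $G^+$, and the boundary of the unbounded face detours through the absorbed pendant path(s). The crucial bookkeeping is that $y_1$ (and $y_2$ in case a) change status from insertion points to interior vertices of the new outer-cycle, so they drop out of the set of insertion points that Proposition~\ref{prop:spokes} must govern for $G^+$, while $D$ becomes a new end-cycle carrying exactly one interior edge. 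Matching the ``exactly two end-cycles'' requirement (Condition~3) forces the extra stipulations: in case a the new cycle $D$ must merge the roles previously played near $y_1$ and $y_2$, which is exactly $\phi(y_1)=\phi(y_2)$; in case b it forces $y_1$ to have been the only insertion point on its end-cycle, which is the content of 2b.

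For the sufficiency direction I would verify the four conditions of Theorem~\ref{thm:non-emptycore} for $G^+$ directly. Condition~1 (the outer boundary is a Hamiltonian cycle of $R^+$) follows from outer-planarity, which $d^*(x,y)=1$ guarantees via Lemma~\ref{lem:outerplanar.conditions}. Conditions~2 and~3 transfer from $G$ because the only interior edge created is the single chord at $y_1$, and $D$ is the only new induced cycle, carrying exactly one interior edge; the hypotheses in 2a/2b are chosen precisely so that the count of end-cycles remains two. Condition~4 is the assignment $\phi$, which is hypotheses 2a/2b restated for $G^+$ once the absorbed insertion points are removed. I would then invoke the construction in the proof of Theorem~\ref{thm:non-emptycore} to exhibit the two induced paths partitioning $G^+$.

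The main obstacle is the careful structural bookkeeping in the second paragraph: correctly describing the new outer-cycle of $R^+$, verifying that the only interior edge created is the chord at $y_1$ and that none of the old interior edges of $G$ is disturbed, and showing that absorbing the pendant path(s) leaves exactly the insertion points that 2a/2b constrain. A secondary subtlety is the possibility that $x$ is an interior vertex of $P^1$ rather than its leaf; then the sub-path of $P^1$ beyond $x$ survives as a new pendant with $x$ as its insertion point, and I would need to check via Lemma~\ref{lem:insertion.points} that this new pendant is compatible with the $\phi$-assignment, which it is, since $x$ lies on the new end-cycle $D$.
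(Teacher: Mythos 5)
Your proposal is correct and follows essentially the same route as the paper: necessity of 1a/1b comes from Lemma~\ref{lem:outerplanar.conditions} and Observation~\ref{obs:core.conditions} (i.e., $d^*(x,y)=1$ with $x$ on a pendant path), and both directions then reduce to verifying Conditions 1--4 of Theorem~\ref{thm:non-emptycore} for $G^+$, with the assignment conditions 2a/2b supplying Condition 4. The only cosmetic difference is that the paper proves necessity of 2a/2b by contradiction (assuming every assignment $\phi$ for $G$ violates them and exhibiting an insertion point of $G^+$ lying on no induced cycle with exactly one interior edge), whereas you derive the same constraints by directly tracking how the core absorbs $P^1$ (and $P^2$) and how $y_1$, $y_2$ cease to be insertion points --- the same underlying bookkeeping.
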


\begin{proof}
    Assume that $G^+$ is a graph of two parallel paths. 
    By Lemma~\ref{lem:outerplanar.conditions}  and Observation~\ref{obs:core.conditions} we have that either Statement 1a. or 1b. is satisfied, as $d^*(x,y)= 1$. 

    Assume we are in the case of 1a. and that the only assignments $\phi$ for $G$ have $\phi(y_1) \neq \phi(y_2)$. 
    Since $y_1$ and $y_2$ are adjacent on the outer-cycle of $G$, it follows that $y_1$ and $y_2$ are both contained in a $C^1$. 
    If no other insertion point is in $C^1$, then we could have $\varphi(y_1)=\varphi(y_2)$ satisfying Statement 2a. 
    Therefore, we will assume that $y_3\in C^1$. 
    However, this implies that the insertion point $y_3$ does not lie on an induced cycle of $G^+$ with exactly one interior edge since $C^1$ will have two interior edges in $G^+$. 
    This contradicts the assumption that $G^+$ is a graph of two parallel paths. 

    Assume we are in the case of 2b. and that the only assignments $\phi$ for $G$ have $|\phi^{-1}(y_1)|=2$. 
    This implies that $y_1,y_2\in C^1$ for some cycle $C^1$ with exactly one interior edge. 
    Furthermore, $y\not\in N[P^2]$ and $y$ is not an insertion point since otherwise we would be in case 1a.
    Therefore, $y$ and $y_1$ must be in $C^2$ since otherwise, $G^+$ has too many cycles with exactly one interior edge. 
    Thus, either $C_2$ contains only one insertion point (namely $y_1$) or $C_2\cap C_3$ contains $y_3$ and $y_3$ is not adjacent to $y_2$ on the outer-cycle of $G$. 
    In either case, $G^+$ fails Condition 4. of Theorem~\ref{thm:non-emptycore}.
    This concludes the proof of the forward direction. 

    Assume 1a. and 2a. are true and that $\phi$ demonstrates that 2a. holds. 
    Notice that the insertion points of $G^+$ can be associated with the insertion points of $G$ according to leaves of the pendant paths (where we have an insertion point for an empty path if $x$ is a leaf of $G$). 
    Then the partition $\varphi$ of the insertion points of $G^+$ that follows the partition $\phi$ satisfies Condition 4. of Theorem~\ref{thm:non-emptycore}. 
    Furthermore, Conditions 1.-3. of Theorem~\ref{thm:non-emptycore} follow from 1a. 
    Therefore, $G^+$ is a graph of two parallel paths. 

    Assume 1b. and 2b. are true and that $\phi$ demonstrates that 2a. holds. 
    Then the partition $\varphi$ of the insertion points of $G^+$ that follows the partition $\phi$ satisfies Condition 4. of Theorem~\ref{thm:non-emptycore}. 
    Furthermore, Conditions 1.-3. of Theorem~\ref{thm:non-emptycore} follow from 1a. 
    Therefore, $G^+$ is a graph of two parallel paths. 
\end{proof}

Now we consider the case where $G$ has distinct insertion points, $G[R]$ is a cycle, and $\N$ is contained in the core.

\begin{Proposition}\label{prop:cycle.mess1}
    Suppose $G$ is a graph of two parallel paths with a non-empty core $R$ and distinct insertion points such that $G[R]$ is a cycle. 
    Let $P^i$ be the pendant paths of $G$ with insertion points $y_i$ and let $0\leq k \leq 4$ be the number of pendant paths.
    Let $\N=\{x,y\}\subseteq R$. 
    Then $G^+$ is a graph of two parallel paths if and only if 
    \begin{enumerate}
        \item $k=0,1$; or 
        \item $k=2$ and $C[x,y]$ and $C[y,x]$ each contain at least one insertion point; or 
        \item $k=3$ and $y_1,y_2\in C[x,y]$ are adjacent and $y_3\in C[y,x]$; or 
        \item $k=4$ and $y_1,y_2\in C[x,y]$ are adjacent and $y_3,y_4\in C[y,x]$ are adjacent. 
    \end{enumerate}
\end{Proposition}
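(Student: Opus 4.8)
The plan is to run the entire argument through the characterization in Theorem~\ref{thm:non-emptycore} applied to $G^+$. Since $\N=\{x,y\}\subseteq R$ consists of two non-adjacent vertices of the cycle $G[R]$, the only edge added in passing from $G$ to $G^+$ is the chord $xy$, which is an interior edge relative to the same outer-cycle $C$. Because pendant-path vertices attach to $R$ by single cut edges, none of them joins a cycle, so the core of $G^+$ is again $R$ and $G^+[R]$ is the cycle $C$ together with the single interior edge $xy$. This dispatches Conditions~1--3 of Theorem~\ref{thm:non-emptycore} immediately: the boundary of the unbounded face of $G^+[R]$ is still the Hamiltonian cycle $C$; every induced cycle of $G^+$ has at most one interior edge, since there is only one interior edge in total; and the chord splits $C$ into exactly the two induced cycles $C^1 = C[x,y]+xy$ and $C^2 = C[y,x]+xy$, each carrying exactly one interior edge. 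Hence $G^+$ is a graph of two parallel paths if and only if Condition~4 of Theorem~\ref{thm:non-emptycore} holds.

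Next I would reduce Condition~4 to the assignment condition of Proposition~\ref{prop:spokes}. Because the insertion points of $G$ are distinct by hypothesis, they remain distinct in $G^+$, so $G^+$ is not an $\alpha$-graph; and since $G^+-R = G-R$, the pendant paths and insertion points $y_1,\dots,y_k$ are unchanged. Thus Condition~4 is precisely the requirement that each $y_i$ lie on $C^1$ or $C^2$ and that there exist an assignment $\phi\colon\{y_i\}\to\{C^1,C^2\}$ with $|\phi^{-1}(C^j)|\le 2$, with $y_i\in\phi(y_i)$, and with $\phi(y_i)=\phi(y_j)$ only when $y_i,y_j$ are consecutive on $C$. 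The crucial translation is that $C^1$ meets $R$ in the arc $C[x,y]$ and $C^2$ in the arc $C[y,x]$, the two sharing only $x,y$; so an insertion point lying strictly inside one arc is forced onto the corresponding cycle, while an insertion point equal to $x$ or $y$ may be sent to either.

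With this dictionary the statement becomes a finite case check on $k$. For $k\le 1$ the empty or singleton assignment trivially satisfies Proposition~\ref{prop:spokes}, so $G^+$ is always a graph of two parallel paths. For $k=2$ I would show an admissible $\phi$ exists exactly when the two insertion points can be separated between the two cycles, i.e.\ when $C[x,y]$ and $C[y,x]$ each receive one; the bound $|\phi^{-1}(C^j)|\le 2$ together with the consecutivity clause governs the remaining configurations. For $k=3$ the counts force a $2$--$1$ split, and the consecutivity clause forces the co-assigned pair to be adjacent on $C$, yielding exactly $y_1,y_2\in C[x,y]$ adjacent and $y_3\in C[y,x]$ (up to relabeling the $y_i$ and swapping the two arcs); for $k=4$ the counts force a $2$--$2$ split with each co-assigned pair adjacent, yielding the stated configuration. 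In each case the forward implication reads off the necessity of the arc conditions from Proposition~\ref{prop:spokes}, while the converse builds the two induced paths explicitly: one cuts the outer-cycle $C$ at two points chosen so that $x$ and $y$ lie on different paths and every pendant path is appended at an endpoint of its arc, exactly as in the proof of Theorem~\ref{thm:non-emptycore}.

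The main obstacle I anticipate is the bookkeeping in the translation and converse steps rather than any single hard idea. One must track the shared vertices $x,y$, which lie on both $C^1$ and $C^2$ and hence enjoy extra freedom in $\phi$ (including the borderline possibility that an insertion point coincides with $x$ or $y$); one must verify that the consecutivity requirement on $C$ corresponds correctly to adjacency within an arc; and in the converse one must confirm that appending the pendant paths at the endpoints of the two arcs never violates the induced-path or non-crossing conditions. The most delicate case to set up cleanly is $k=2$ when both insertion points fall in the same arc, since their adjacency on $C$ interacts with the consecutivity clause and must be handled with care.
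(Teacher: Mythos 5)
Your reduction is exactly the paper's: both pass to Theorem~\ref{thm:non-emptycore} applied to $G^+$, observe that Conditions 1--3 hold automatically because the chord $xy$ is the unique interior edge (so $C^1=C[x,y]+xy$ and $C^2=C[y,x]+xy$ are the only induced cycles with exactly one interior edge), and then identify Condition 4 with the assignment condition of Proposition~\ref{prop:spokes}, followed by a finite check over $k$. The paper's proof is precisely this, compressed into a few sentences (plus the classification of $G[Y]$ from Proposition~\ref{prop:spokes}). So the approach matches; the problem is in the execution of the case check.

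Your $k=2$ step does not go through as you describe it, and you half-noticed this when you flagged that case as ``delicate.'' You claim an admissible $\phi$ exists exactly when $C[x,y]$ and $C[y,x]$ each contain an insertion point. That is false under your own dictionary: if both insertion points lie strictly inside one arc, say $y_1,y_2\in C(x,y)$, and are adjacent on $C$, then $\phi(y_1)=\phi(y_2)=C^1$ satisfies every requirement of Proposition~\ref{prop:spokes} ($|\phi^{-1}(C^1)|=2$, both vertices lie on $C^1$, and the co-assigned pair is consecutive on the outer-cycle), so your reduction declares $G^+$ a graph of two parallel paths while statement 2 of the proposition declares it is not. This is not mere bookkeeping. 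Take $C=c_0c_1c_2c_3c_4c_5c_0$, $\N=\{c_0,c_3\}$, and pendant vertices $a$ at $c_1$ and $b$ at $c_2$: then $P=a\,c_1\,c_0\,c_5\,c_4$ and $Q=b\,c_2\,c_3$ are disjoint induced paths covering $G^+$ whose connecting edges $c_1c_2$, $c_0c_3$, $c_3c_4$ do not cross, so $G^+$ genuinely is a graph of two parallel paths even though $C[y,x]=\{c_3,c_4,c_5,c_0\}$ contains no insertion point. Consequently the equivalence you assert for $k=2$ (which is also what the paper's proof asserts, without justification) cannot be established by this route: the assignment criterion of Proposition~\ref{prop:spokes} and statement 2 of the proposition disagree on this configuration, and any correct write-up must either resolve that discrepancy or modify the statement. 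Your counting arguments for $k\le 1$ and the forced $2$--$1$ and $2$--$2$ splits for $k=3,4$ are fine, but the proof as proposed cannot close case 2.
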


\begin{proof}
    Let $Y$ denote the set of insertion points of $G$.
    By Proposition~\ref{prop:spokes}, $G[Y]$ is either $K_1, K_2, 2K_1, K_2\cup K_1, P_2, C_3, 2K_2, P_4, C_4$. 
    Notice that $G[Y] = C_3$ is impossible when $\N \subseteq R$. 
    We will use the following conventions. 
    If $G[Y]$ has an edge, then $y_1$ and $y_2$ are adjacent.
    If $G[Y]$ is a path, then the $y_i$s are indexed in their path order. 

    Notice that Conditions 1.-3. in Theorem~\ref{thm:non-emptycore} are satisfied for $G^+$. 
    Each statement in this proposition is necessary and sufficient for $G^+$ to satisfy Condition 4. of Theorem~\ref{thm:non-emptycore} where $V(C^1)=C[x,y]$ and $V(C^2) = C[y,x]$ in $G^+$.
\end{proof}

Next, we consider the case where $G$ has distinct insertion points, $G[R]$ is a cycle, and at least one vertex in $\N$ is not in the core $R$.

\begin{Proposition}\label{prop:cycle.mess2}
    Suppose $G$ is a graph of two parallel paths with a non-empty core $R$ and distinct insertion points such that $G[R]$ is a cycle. 
    Let $P^i$ be the pendant paths of $G$ with insertion points $y_i$ and let $1\leq k \leq 4$ be the number of pendant paths.
    Let $\N=\{x,y\}$ be a pair of non-adjacent vertices in $G$ with $x$ on a pendant path $P^1$. 
    Then $G^+$ is a graph of two parallel paths if and only if $d^*(x,y)=1$ and one of the following holds
    \begin{enumerate}
    \item $k=1,2$; or  
    \item $k=3$, and $y \in N[P^2]$ or $y_2$ is adjacent to $y_3$; or 
    \item $k=4$ and $y\in N[P^2]$.  
    \end{enumerate}
\end{Proposition}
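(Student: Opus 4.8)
The plan is to follow the same template as the preceding propositions (\ref{prop:mess2} and \ref{prop:cycle.mess1}), reducing everything to verifying the conditions of Theorem~\ref{thm:non-emptycore} for the augmented graph $G^+$. Since $x$ lies on a pendant path $P^1$, Observation~\ref{obs:core.conditions} immediately forces $d^*(x,y)=1$ as a necessary condition, so the outer-planarity half (Condition~1 of Theorem~\ref{thm:non-emptycore}) is handled by Lemma~\ref{lem:outerplanar.conditions}(2). Because $G[R]$ is a cycle, $G$ has no interior edges, so Conditions~2 and 3 of Theorem~\ref{thm:non-emptycore} are vacuous for $G$; I would first check that adding the single edge $xy$ creates at most the allowed interior structure in $G^+$. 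The crux is that $x$ sits on a pendant and $d^*(x,y)=1$ means $y$ is within distance one of the insertion point $y_1$ along the cycle, so the new edge $xy$ together with part of the cycle bounds a new face; I must track exactly which induced cycles of $G^+$ carry one interior edge.

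For the forward direction I would assume $G^+$ is a graph of two parallel paths and derive the case split on $k$. The key object is $G[Y]$ where $Y=\{y_i\}$ is the set of insertion points; by Proposition~\ref{prop:spokes} (the cycle case, $k\geq 3$), $G[Y]$ is one of $K_2\cup K_1$, $2K_2$, $P_3$, $P_4$, $C_3$, or $C_4$. Adding $xy$ with $x\in P^1$ effectively promotes the pendant attachment to create an interior edge incident to $y_1$, so the insertion points of $G^+$ must admit an assignment $\phi$ into the two single-interior-edge cycles $C^1,C^2$ as in Proposition~\ref{prop:spokes}(2). I would argue that for $k=1,2$ this assignment always exists once $d^*(x,y)=1$, giving case~1; for $k=3$ the obstruction is whether $y_3$ can be placed on a single-interior-edge cycle, which succeeds exactly when $y\in N[P^2]$ (so $y_1,y_2$ pair off) or $y_2$ is adjacent to $y_3$ on the cycle (so $y_2,y_3$ pair off while $y_1$ gets the new interior edge); for $k=4$ all four insertion points plus the new structure must be accommodated, forcing $y\in N[P^2]$ so that the pairing is fully determined.

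For the backward direction, in each case I would exhibit the explicit pair of induced paths $P,Q$ partitioning $G^+$, mirroring the construction in the proof of Theorem~\ref{thm:non-emptycore}: take the two single-interior-edge cycles as the ``ends'' $C^1,C^2$, route $P$ and $Q$ along the outer-cycle intervals $C[x,y]$ and $C[y,x]$, and attach each pendant $P^i$ at its insertion point on whichever of $P$ or $Q$ contains that point, using the adjacency/$N[P^2]$ hypotheses to guarantee that $x$ and $y_1$ land on the same path (since $d^*(x,y)=1$ places them consecutively). The main obstacle I anticipate is the careful bookkeeping in the $k=3$ case: one must verify that the two sub-cases ($y\in N[P^2]$ versus $y_2$ adjacent to $y_3$) are genuinely the only ways to obtain a valid assignment $\phi$ for $G^+$, and that no other adjacency pattern of $\{y_1,y_2,y_3\}$ on the cycle slips through. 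This requires enumerating the possible positions of $y$ relative to the $y_i$ under the constraint $d^*(x,y)=1$ and checking each against Condition~4 of Theorem~\ref{thm:non-emptycore}, exactly the kind of case analysis that drove the proof of Proposition~\ref{prop:mess2}.
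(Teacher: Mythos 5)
Your proposal follows essentially the same route as the paper's own proof: reduce to Theorem~\ref{thm:non-emptycore}, use Lemma~\ref{lem:outerplanar.conditions} together with Observation~\ref{obs:core.conditions} to show $d^*(x,y)=1$ handles outer-planarity and the interior-edge conditions, invoke Proposition~\ref{prop:spokes} to classify the insertion-point configurations $G[Y]$, and then match the case split on $k$ to Condition~4. In fact the paper's proof is terser than your plan---it simply asserts that Statements 1--3 are necessary and sufficient for Condition~4---so the explicit $\phi$-assignment bookkeeping you propose (including the $k=3$ sub-case enumeration) is a faithful, somewhat more detailed rendering of the same argument.
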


\begin{proof}
    Let $Y$ denote the set of insertion points of $G$. 
    Once again, by Proposition~\ref{prop:spokes}, $G[Y]$ is either $K_1, K_2, 2K_1, K_2\cup K_1, P_2, C_3, 2K_2, P_4, C_4$. 

    By Lemma~\ref{lem:outerplanar.conditions} and Observation~\ref{obs:core.conditions} $d^*(x,y)=1$ is necessary and sufficient for $G^+$ to be outer-planar with a unique outer-cycle and a single interior edge; which takes care of Condition 1. and 2. in Theorem~\ref{thm:non-emptycore}. 
    Statements 1.-3. are necessary and sufficient to ensure that Conditions 4. hold for $G^+$.
\end{proof}

Finally, we consider the case where $G$ does not have distinct insertion points. Note that in this case, $G$ must be an $\alpha$-graph (see Proposition \ref{prop:alpha.case}).

\begin{Proposition}\label{prop:alpha.mess}
    Let $G$ be an $\alpha$-graph, and $N=\{x,y\}$. 
    Then $G^+$ is a graph of two parallel paths if and only if 
    \begin{enumerate}
        \item $x,y$ are core vertices and $x=c_0$, or 
        \item without loss of generality, $x\in N[P^1]$ and $d^*(x,y) = 1$.
    \end{enumerate}
\end{Proposition}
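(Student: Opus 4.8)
The plan is to lean on the rigid structure of $\alpha$-graphs. Writing the outer-cycle of $G$ as $c_0c_1\cdots c_{r-1}c_0$, Proposition~\ref{prop:alpha.case} and Observation~\ref{obs:alpha.unique} tell us that $P^1,P^2$ are pendant paths at the repeated insertion point $c_0$, that $P^3,P^4$ (when present) hang at $c_{r-1},c_1$, that every interior edge of $G$ is incident to $c_0$, and that the path partition $P=P^1\cup\{c_0\}\cup P^2$, $Q=G-P$ is forced. Since $\N$ is independent we have $xy\notin E(G)$. I would organize both directions around the dichotomy: either both of $x,y$ lie in the core $R$, or (after possibly swapping $x,y$) at least one of them, say $x$, lies on a pendant path, i.e.\ in $N[P^i]$ for some pendant $P^i$.

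For necessity, assume $G^+$ is a graph of two parallel paths. If $x,y\in R$, then adding the chord $xy$ does not disturb $P^1,P^2$, so $R(G^+)=R$, the unique Hamilton cycle $C$ of $G^+[R]$ is still the outer-cycle (Proposition~\ref{prop:hamiltoniancycle}), and $c_0$ remains a repeated insertion point of $G^+$. Applying Proposition~\ref{prop:alpha.case} to $G^+$ forces every interior edge of $G^+$ --- in particular the chord $xy\notin E(C)$ --- to be incident to $c_0$, so $x=c_0$ or $y=c_0$; this is Condition~1. If instead at least one endpoint lies on a pendant path, then Observation~\ref{obs:core.conditions} immediately yields $d^*(x,y)=1$, which is Condition~2. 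Necessity therefore needs essentially no computation.

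For sufficiency I handle the two conditions separately. Under Condition~1 the new edge $c_0y$ is once more an interior edge at $c_0$, so $G^+$ is again an $\alpha$-graph (a member of some $G(r,p_1,p_2,p_3,p_4)$ with one additional interior edge) and hence a graph of two parallel paths by construction. Under Condition~2 the second case of Lemma~\ref{lem:outerplanar.conditions} already gives that $G^+$ is outer-planar; what remains is to produce the two induced paths. The key observation is that the new edge, joining the $c_0$-region to a vertex one Hamilton step away (near $c_1$ or near $c_{r-1}$), creates a single cycle that \emph{absorbs} an initial segment of $x$'s pendant path and of the far pendant path into the core, while leaving all interior edges incident to $c_0$. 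I would then exhibit the partition explicitly: one induced path runs from the tail of $x$'s pendant through $x,\dots,c_0$ and out along $P^2$, and the other runs from the far pendant's tail through $y,\dots,c_1,c_2,\dots,c_{r-1}$ and out along $P^3$, after which one checks that each path is induced and that the cross edges (namely $xy$ together with the fan $c_0c_1,\dots,c_0c_{r-1}$) respect the non-crossing ordering required of parallel paths. The cases $x=c_0$ or $y=c_1$ (and the mirror-image $c_{r-1}$ side) are limiting instances of the same partition.

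I expect the sufficiency of Condition~2 to be the main obstacle, since it is the only place where merely verifying outer-planarity is not enough: one must correctly identify the enlarged core of $G^+$, confirm that its outer-cycle is the enlarged Hamilton cycle with all interior edges still emanating from $c_0$, and check the parallel (non-crossing) condition on the re-partition. To keep the bookkeeping manageable I would exploit the symmetry $c_1\leftrightarrow c_{r-1}$, $P^3\leftrightarrow P^4$ to reduce to the single representative configuration $x\in V(P^1)$, $y\in V(P^4)$ with $d^*(x,y)=1$ and verify the partition there; alternatively, one can simply check Conditions~1--4 of Theorem~\ref{thm:non-emptycore} for $G^+$, noting that the two cycles with exactly one interior edge are the absorbed segment at the $c_0$-end and the terminal cycle at the $c_{r-1}$-end.
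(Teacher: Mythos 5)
Your necessity argument has a genuine gap in the pendant-path case. Condition~2 is a conjunction: (up to relabeling) $x\in N[P^1]$ \emph{and} $d^*(x,y)=1$. From the hypothesis that $G^+$ is a graph of two parallel paths with an endpoint of the new edge on a pendant path, Observation~\ref{obs:core.conditions} gives you only the second conjunct, $d^*(x,y)=1$; it says nothing about \emph{which} pendant path carries that endpoint. So the step ``$d^*(x,y)=1$, which is Condition~2'' proves strictly less than Condition~2: it does not exclude the configuration where $x$ lies on $P^3$ or $P^4$ (the pendants at $c_{r-1}$ and $c_1$) and $y$ also lies outside $N[P^1\cup P^2]$. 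That configuration is not vacuous: take an $\alpha$-graph with $r\geq 4$ and a pendant $P^3$ at $c_{r-1}$, let $x$ be the first vertex of $P^3$ and $y=c_{r-2}$; then $x$ is on a pendant path and $d^*(x,y)=1$, yet neither Condition~1 nor Condition~2 holds. As written, your argument would certify Condition~2 in this situation, which would make the ``only if'' direction false. What rescues the proposition is that such a $G^+$ is \emph{not} a graph of two parallel paths, and proving this is precisely the missing step --- the paper devotes its entire third case to it. The key point is that the new edge does not touch $P^1$ or $P^2$, so $G^+$ retains the repeated insertion point $y_1=y_2=c_0$; Proposition~\ref{prop:alpha.case} then forces every interior edge of $G^+$ to be incident to $c_0$, but forming $G^+$ from such an $\N$ converts a former outer-cycle edge (here $c_{r-1}c_{r-2}$) into an interior edge of $G^+$ avoiding $c_0$, a contradiction. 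Your own core-core case uses exactly this mechanism, so the fix is to run the same argument once more for the pendant case rather than stopping at Observation~\ref{obs:core.conditions}.

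The remainder of your proposal is sound and, in places, more explicit than the paper. Your core-core necessity case is the paper's first case phrased directly instead of contrapositively, and your sufficiency arguments (Condition~1 yields another $\alpha$-graph; Condition~2 is handled by an explicit induced-path partition, or alternatively by checking Conditions~1--4 of Theorem~\ref{thm:non-emptycore}) fill in details that the paper compresses into ``the reverse direction can be verified by inspection.'' But the omitted third case of necessity is not a formality; without it the characterization you prove is genuinely weaker than the stated one.
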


\begin{proof}
   The reverse direction can be verified by inspection. 
   
   Assume $x,y\neq c_0$ are core vertices, then $G^+$ does not have distinct insertion points, and $G^+$ is not an $\alpha$-graph, which implies that $G^+$ is not a graph of two parallel paths by Proposition~\ref{prop:alpha.case}. 
   Thus, we will proceed with the assumption that either $x$ or $y$ is not a core vertex.

   Assume $d^*(x,y) \neq 1$. 
   If $d^*(x,y)>1$, then $G^+$ is not outer-planar by Lemma~\ref{lem:outerplanar.conditions}. 
   If $d^*(x,y)=0$, then $G^+$ does not have an outer-cycle, and therefore, $G^+$ is not a graph of two parallel paths by Lemma~\ref{prop:hamiltoniancycle}. 

   Assume that $d^*(x,y)=1$ but $x,y$ are not in $N[P^1\cup P^2]$.
   Without loss of generality, $x\in P^3$.
   Then $G^+$ has an interior edge that does not contain $y_1$. 
   Thus, $G^+$ is not an $\alpha$-graph, and consequently, $G^+$ is not a graph of two parallel paths by Proposition~\ref{prop:alpha.case}. 
\end{proof}

\section{Extreme values for $\mr(G^{\N})$}\label{sec:extreme}

In this section, we characterize extreme values for the minimum rank of a probe graph. We start by considering low values. 

\begin{Observation}
Let $G^{\N}$ be a probe graph. Then $\mr(G^{\N})=0$ if and only $G$ is the empty graph. 
\end{Observation}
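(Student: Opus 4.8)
The plan is to argue both directions directly from the definition of $\cals(G^{\N})$, using the fact that a matrix has rank $0$ precisely when it is the zero matrix. Recall that every $M\in\cals(G^{\N})$ has the form $M = A + \begin{bmatrix}O&O\\O&B\end{bmatrix}$ with $A\in\cals(G)$ and $B$ a symmetric $k\times k$ matrix. Hence $\mr(G^{\N})=0$ is equivalent to the statement that the zero matrix itself lies in $\cals(G^{\N})$, and the whole Observation reduces to deciding when that can happen.

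For the backward direction, I would suppose $G$ is the empty graph. Then $G$ has no edges, so the off-diagonal entries of every matrix in $\cals(G)$ are forced to be $0$; taking all diagonal entries equal to $0$ exhibits the zero matrix as an element of $\cals(G)$. Choosing $B=O$ then places the zero matrix in $\cals(G^{\N})$, so $\mr(G^{\N})=0$. For the forward direction, I would suppose $\mr(G^{\N})=0$, so the zero matrix $O$ belongs to $\cals(G^{\N})$, and write $O = A + \begin{bmatrix}O&O\\O&B\end{bmatrix}$. The key observation is that the additive block can only alter entries indexed by pairs of non-probe vertices. Since $\N$ is an independent set, no edge of $G$ joins two non-probe vertices, so for every edge $v_iv_j\in E(G)$ at least one endpoint is a probe vertex, and the corresponding $(i,j)$ entry is left untouched by $B$. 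Thus $0=O_{ij}=A_{ij}\neq 0$, a contradiction, and therefore $G$ has no edges.

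The argument is short, and I do not expect a substantial obstacle. The only point that must be handled with care is the role of the independence of $\N$: it is exactly what guarantees that each edge of $G$ corresponds to a matrix position that the free block $B$ cannot zero out, so that edges of $G$ genuinely obstruct the existence of a rank-$0$ realization.
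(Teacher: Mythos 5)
Your proof is correct. The paper states this result as an \emph{Observation} and provides no proof at all, treating it as immediate; your argument---that $\mr(G^{\N})=0$ is equivalent to the zero matrix lying in $\cals(G^{\N})$, together with the fact that independence of $\N$ guarantees every edge of $G$ has a probe endpoint and hence a nonzero entry that the free block $B$ cannot cancel---is exactly the routine verification the paper leaves implicit, so there is nothing to reconcile between the two.
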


\begin{Theorem}
Let $G^{\N}$ be a probe graph on $n$ vertices. Then $\mr(G^{\N})=1$ if and only if
$G= (K_a\vee \overline{K_b}) \cup \overline{K_c}$ where $a+b+c=n$, $a>0$, and if $b\geq 2$, then $V(K_b)\subseteq \N$. 
\end{Theorem}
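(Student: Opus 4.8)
The plan is to exploit the fact that every real symmetric matrix of rank $1$ has the form $\sigma\bv\bv^\top$ with $\sigma\in\{\pm1\}$ and $\bv\neq\zero$, and to read off $G$ from the support of $\bv$. Recall first the shape of the constraints defining $\cals(G^{\N})$: for $i\neq j$, the entry $M_{ij}$ is forced to be nonzero exactly when $v_iv_j\in E(G)$ and forced to be zero exactly when $v_iv_j\notin E(G)$, \emph{provided at least one of $v_i,v_j$ lies in $\PP$}; when both lie in $\N$ the entry is completely free (this is the only place the probe structure enters, since $\N$ is independent). The whole argument is organized around the set $S=\{\,i:\bv_i\neq 0\,\}$, since for $M=\sigma\bv\bv^\top$ one has $M_{ij}\neq 0$ (for $i\neq j$) if and only if $i,j\in S$.

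For the forward direction I would assume $\mr(G^{\N})=1$ and fix a rank-one $M=\sigma\bv\bv^\top\in\cals(G^{\N})$. For every pair $i\neq j$ with at least one endpoint a probe, the equivalence $v_iv_j\in E(G)\iff i,j\in S$ pins down all edges of $G$ that touch $\PP$: they are precisely the pairs inside $S$. Since $\N$ is independent, every edge of $G$ touches a probe, so $G$ has \emph{no} edges outside $S$, whence $S^{c}$ is a set of isolated vertices $\overline{K_c}$. Inside $S$, any pair containing a probe is an edge, so $S\cap\PP$ is a clique joined to all of $S$, while the pairs inside $S\cap\N$ are non-edges; thus $G[S]=K_{a}\vee\overline{K_{b}}$ with $K_a=S\cap\PP$ and $\overline{K_b}=S\cap\N$. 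This gives $G=(K_a\vee\overline{K_b})\cup\overline{K_c}$, and since $\overline{K_b}=S\cap\N\subseteq\N$ the condition $V(\overline{K_b})\subseteq\N$ holds automatically. Finally $a\geq 1$: because $\mr(G^{\N})=1$ the graph $G$ is nonempty, so it has an edge, and that edge touches a probe lying in $S$, forcing $S\cap\PP\neq\emptyset$.

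For the converse I would start from $G=(K_a\vee\overline{K_b})\cup\overline{K_c}$ satisfying the hypotheses and take $\bv$ equal to $1$ on $V(K_a)\cup V(\overline{K_b})$ and $0$ on $V(\overline{K_c})$, and set $M=\bv\bv^\top$, which has rank $1$. To verify $M\in\cals(G^{\N})$, note that the support is exactly $S=V(K_a)\cup V(\overline{K_b})$: every edge of $G$ lies inside this set and therefore receives a nonzero entry, while every vertex of $\overline{K_c}$ is outside $S$ and receives only zeros, matching its isolation. The only entries of $M$ that are nonzero on a \emph{non-edge} of $G$ are the pairs inside $\overline{K_b}$; these must be allowed to be nonzero, which is permitted precisely because such pairs lie in $\N$ (here is where the hypothesis $V(\overline{K_b})\subseteq\N$ for $b\geq 2$ is used, the pairs being nonexistent when $b\leq 1$). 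Hence $M\in\cals(G^{\N})$ and $\mr(G^{\N})\leq 1$; combined with $G$ being nonempty (so $\mr(G^{\N})\geq 1$ by the preceding Observation) we conclude $\mr(G^{\N})=1$.

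The main obstacle, and the real content of the statement, is exactly this coupling in the converse: a rank-one matrix is forced to make the independent part $\overline{K_b}$ pairwise nonzero, so those vertices \emph{must} be non-probes for membership in $\cals(G^{\N})$ to survive. Two degeneracies need care and should be remarked on explicitly. First, when $b=1$ the single vertex of $\overline{K_b}$ is adjacent to all of $K_a$ and can be folded into the clique (so $K_a\vee\overline{K_1}=K_{a+1}$), which is why the hypothesis only constrains the $b\geq 2$ case. Second, to have rank exactly $1$ rather than $0$ one needs $G$ to contain an edge, i.e. the join $K_a\vee\overline{K_b}$ must be nontrivial; this rules out the empty instance $a=1,\ b=0$ and should be stated alongside $a>0$.
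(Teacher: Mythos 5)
Your proof is correct, but it takes a genuinely different route from the paper's. You argue directly on the full $n\times n$ matrix: every rank-one symmetric matrix is $\sigma\bv\bv^\top$, and the graph can be read off the support $S$ of $\bv$, with the probe structure entering only through which pairs inside $S$ are forced to be edges. This makes the argument self-contained and unified: the structure $(K_a\vee\overline{K_b})\cup\overline{K_c}$ and the condition $V(\overline{K_b})\subseteq\N$ both fall out of the single identification $\overline{K_b}=S\cap\N$. The paper instead works with the $(n-k)\times n$ submatrix $[A\,\,B]$ of probe rows via the pattern-matrix bound of Theorem~\ref{thm.tophalf}, splits into cases ($A=O$ with $\bx$ having two nonzero entries, which is ruled out by a zero forcing argument showing $\M(G^{\N})\leq n-2$; $A\neq O$, giving $A=\bx^\top\bx$; and $\bx$ with a single nonzero entry, giving a star), and then derives the non-probe condition separately at the end by exhibiting an induced subgraph $G[V(K_a)\cup\{u,v\}]$ of minimum rank $2$. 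Your approach avoids the case analysis, the zero forcing detour, and the dependence on Theorem~\ref{thm.tophalf}; the paper's approach showcases the pattern-matrix machinery it develops earlier. One further point in your favor: you explicitly flag the degenerate instance $a=1$, $b=0$ (where $G$ is empty and the claimed equality $\mr(G^{\N})=1$ fails), whereas the paper's proof simply asserts that ``$G$ has at least one edge,'' which the stated hypotheses do not guarantee; your remark that nonemptiness of the join should be added to the hypotheses is the right repair, and it applies to the paper's version as well.
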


\begin{proof}
Beginning with the backward direction, suppose $G$ has the desired form. Notice that since $V(K_b)\subseteq \N$ or $b=1$,\[
\begin{bmatrix}
J_{a\times a}& J_{a\times b} & O_{a\times c}\\
J_{b\times a} &J_{b\times b} & O_{b\times c}\\
O_{c\times a} & O_{c\times b} & O_{c\times c}
\end{bmatrix}\in \cals(G^{\N}).\]
This matrix has rank $1$, which shows that $\mr(G^{\N})\leq 1$. 
Furthermore, since $G$ has at least one edge, $0<\mr(G^{\N})\leq 1$.

For the sake of proving the forward direction, assume that $\mr(G^{\N})=1$. 
This implies that $G$ contains at least one edge. 
Let $P$ be the part of $G^{\N}$ corresponding to the probe vertices. 
By Theorem~\ref{thm.tophalf}, we have $\mr(P)=1$. 
Let $[A\,\,B]$ be a matrix that realizes $\mr(P)=1$ where the columns of $B$ are indexed by $\N$. 
Let $\bx$ be a non-zero column of $[A\,\,B]$, and notice that all columns of $[A\,\,B]$ are scalar multiples of $\bx$. 

For the sake of contradiction, assume that $A=O$ and that $\bx$ has at least two non-zero entries. 
Then $G=(\overline{K_a}\vee \overline{K_b})\cup \overline{K_c}$ where $a$ is the number of non-zero entries in $\bx$ and $b$ is the number of non-zero columns in $B$.
Notice $a\geq 2$ and $b\geq 1$. 
By assumption,  $\N$ does not intersect  $V(\overline{K}_a)$. 
Let $u,w\in V(\overline{K}_a)$, $v\in V(\overline{K}_b)$, and notice that $V\setminus \{v,w\}$ is a forcing set for $G^{\N}$. 
Thus, $\M(G^{\N})\leq n-2$, and so $\mr(G^{\N})\geq 2$, which is a contradiction. 
Therefore, either $A\neq O$ or $\bx$ has exactly one non-zero entry.

\textbf{Case 1:} Suppose $A\neq O$.
This implies that $A=\bx^\top \bx$ since $A$ is symmetric and has rank $1$. 
Let $a$ be the number of non-zero entries in $\bx$, and let $b$ be the number of non-zero columns of $B$, and $n-a-b=c.$
By inspection, 
\[\begin{bmatrix}A&B\\B^\top & O\end{bmatrix} \in \cals(G)\]
where $G=(K_a\vee \overline{K_b}) \cup \overline{K_c}$. 

\textbf{Case 2:} Suppose $\bx$ has exactly one non-zero entry.
This implies that $A$ does not have an off-diagonal non-zero entry, and $G$ is a star. 
In particular, if $b$ is the number of non-zero columns in $B$ and $n-b-1=c$, then $G= (K_1\vee \overline{K_b})\cup \overline{K_c}$.

For either case, suppose $b\geq 2$ and there exists $u\in V(K_b)\setminus \N$ and $u,v\in V(K_b)$. 
Then $G[V(K_a)\cup\{u,v\}]$ has minimum rank $2$, which contradicts the assumption that $G^{\N}$ has minimum rank $1$. 
\end{proof}

In order to characterize probe graphs for which the minimum rank is at most 2, we first recall the following characterization for graphs with minimum rank at most 2.
\begin{Theorem}\cite{MinRank2}\label{thm:mr2Comp}
A graph $G$ has $\mr(G)\leq 2$ if and only if the complement of $G$ is of the form
$$(K_{s_1} \cup K_{s_2} \cup K_{p_1,q_1} \cup\cdots \cup K_{p_k,q_k} ) \vee K_r$$
for appropriate nonnegative integers $k, s_1, s_2, p_1, q_1,\dots, p_k, q_k, r$ with $p_i + q_i >0$ for all $i =1,\dots, k$.
\end{Theorem}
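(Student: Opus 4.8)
The plan is to prove both directions by way of the factorization of a symmetric matrix of rank at most two as $A=B^{\top}SB$, where $B$ is $2\times n$ and $S$ is a $2\times 2$ symmetric matrix, and then to read the structure of $\overline{G}$ directly off the $S$-orthogonality relation among the columns $b_1,\dots,b_n$ of $B$. The starting observation is that for $A\in\cals(G)$ of rank $r\le 2$, the spectral decomposition gives $A=\sum_{i=1}^r \lambda_i v_i v_i^{\top}$ with $\lambda_i\neq 0$; taking $B$ to be the matrix whose rows are the $v_i^{\top}$ and $S=\operatorname{diag}(\lambda_1,\dots,\lambda_r)$ yields $A=B^{\top}SB$ with $A_{ij}=b_i^{\top}Sb_j$. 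Thus for $i\neq j$ we have $ij\in E(\overline{G})$ exactly when $b_i$ and $b_j$ are orthogonal with respect to the bilinear form $S$, and a zero column $b_i=0$ corresponds to a vertex isolated in $G$, hence universal in $\overline{G}$.

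For the forward direction, I would assume $\mr(G)\le 2$, fix such a matrix $A$, and split into cases on the signature of $S$ (equivalently, of $A$). The rank $0$ and rank $1$ cases are degenerate and give $\overline{G}=K_n$ and $\overline{G}=\overline{K}_s\vee K_r$ respectively, both special instances of the target form. When $A$ has rank two and $S$ is indefinite, I would pass to an isotropic basis $n_+,n_-$ with $\langle n_+,n_+\rangle_S=\langle n_-,n_-\rangle_S=0$ and $\langle n_+,n_-\rangle_S\neq 0$, and write $b_i=\alpha_i n_+ + \beta_i n_-$. A direct computation shows that two nonzero columns are $S$-orthogonal if and only if $\alpha_i\beta_j+\alpha_j\beta_i=0$; consequently the columns proportional to $n_+$ form a clique $K_{s_1}$ in $\overline{G}$, those proportional to $n_-$ form a clique $K_{s_2}$, the columns with a fixed finite ratio $\alpha/\beta=t$ are mutually non-orthogonal (an independent set in $\overline{G}$) and are orthogonal precisely to the columns of ratio $-t$, producing the complete bipartite pieces $K_{p_i,q_i}$, and the zero columns give the universal part $K_r$. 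When $S$ is definite there are no isotropic vectors, so $s_1=s_2=0$, and ordinary perpendicularity of the direction lines produces complete bipartite pieces (with independent-set pieces recorded as $K_{p,0}$); this is again a special case. Assembling the pieces yields the claimed decomposition of $\overline{G}$.

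For the backward direction, I would reverse this construction: given $\overline{G}=(K_{s_1}\cup K_{s_2}\cup K_{p_1,q_1}\cup\cdots\cup K_{p_k,q_k})\vee K_r$, fix an indefinite form $S$, assign the vector $n_+$ to each vertex of $K_{s_1}$, the vector $n_-$ to each vertex of $K_{s_2}$, vectors of ratio $t_i$ and $-t_i$ to the two sides of $K_{p_i,q_i}$ (with the $t_i$ chosen distinct and away from $0$ and the isotropic directions), and the zero vector to each vertex of $K_r$. Setting $A=B^{\top}SB$ gives a symmetric matrix of rank at most two whose $S$-orthogonality pattern is exactly $\overline{G}$, so $A\in\cals(G)$ and $\mr(G)\le 2$. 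The only care needed is to verify that distinct direction classes, and distinct bipartite pieces, are genuinely non-orthogonal, which follows from the same ratio computation together with the choice of distinct $t_i$.

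The main obstacle I anticipate is the forward-direction bookkeeping: correctly translating the single algebraic condition $\alpha_i\beta_j+\alpha_j\beta_i=0$ into the exact list of components of $\overline{G}$, and in particular recognizing that the two isotropic directions are what force the appearance of the two cliques $K_{s_1},K_{s_2}$ in the indefinite case, while they are absent in the definite case. One must also argue that every symmetric rank-two realization is covered by these two signature cases and that the degenerate ranks fold into the general form, so that no components beyond those listed can arise.
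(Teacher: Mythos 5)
This statement is not proved in the paper at all: it is quoted verbatim from \cite{MinRank2} (Barrett, van der Holst, and Loewy) as a known ingredient for the probe-graph characterization that follows, so there is no in-paper argument to compare against. Judged on its own merits, your proposal is correct and is essentially a reconstruction of the argument in that reference: writing $A=B^{\top}SB$ with $B$ a $2\times n$ matrix and reading off $E(\overline{G})$ as the $S$-orthogonality relation among the columns is exactly the right mechanism, the indefinite/definite split corresponds to the two isotropic directions being present or absent (hence $K_{s_1},K_{s_2}$ appearing or not), and the backward direction by assigning $n_+$, $n_-$, ratio-$\pm t_i$ vectors, and zero vectors realizes the form with rank at most two. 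Two bookkeeping points deserve explicit care when you write this up. First, in the forward direction the phrase ``fixed finite ratio $\alpha/\beta=t$'' must exclude $t=0$ (and $t=\infty$): columns of ratio $0$ are multiples of $n_-$ and are mutually $S$-orthogonal, so they belong to the clique $K_{s_2}$, not to an independent side of a bipartite piece; your separate handling of the two isotropic directions already does this, but the ratio classes should be indexed by unordered pairs $\{t,-t\}$ with $t\neq 0,\infty$. Second, in the backward direction ``distinct $t_i$'' must mean distinct absolute values, i.e.\ $\{t_i,-t_i\}\cap\{t_j,-t_j\}=\emptyset$ for $i\neq j$, so that distinct bipartite pieces are genuinely non-orthogonal and hence lie in different components of the union part of $\overline{G}$. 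With those clarifications the argument is complete and matches the standard proof.
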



Relying on Theorem~\ref{thm:mr2Comp}, we now describe a characterization for probe graphs with minimum rank at most 2.

\begin{Theorem}
A probe graph $G^{\N}$ has $\mr(G^{\N})\leq 2$ if and only if the complement of $G$ has vertex set $V(H)$ and edge set $E(\overline{H})+S$, where $H$ is a graph whose complement has the form described in Theorem~\ref{thm:mr2Comp} and $S$ is a subset of the edges in the complete graph $\overline{N}$.
\end{Theorem}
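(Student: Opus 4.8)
The plan is to reduce the statement to a direct application of Theorem~\ref{thm:mr2Comp}, using the fact recorded when $\cals(G^{\N})$ was defined that $\cals(G^{\N})=\bigcup_{H\in G^{\N}}\cals(H)$, where $H$ ranges over all supergraphs of $G$ obtained by adding edges inside $\N$. Consequently $\mr(G^{\N})=\min_{H\in G^{\N}}\mr(H)$, so $\mr(G^{\N})\leq 2$ if and only if some such $H$ satisfies $\mr(H)\leq 2$. The real content of the theorem is then only the translation of the condition ``$H$ is obtained from $G$ by adding edges inside $\N$'' into a condition on complements: passing to the complement turns adding an edge inside $\N$ to $G$ into deleting that same edge inside $\N$ from $\overline{G}$. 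Since $\N$ is independent in $G$, the non-probe block of $\overline{G}$ is complete, so every potential edge inside $\N$ is present in $\overline{G}$ and may be deleted freely; hence $\{\overline{H}:H\in G^{\N}\}$ is exactly the family of graphs $\overline{G}-S$ for $S$ a set of edges among the vertices of $\N$, i.e. those graphs $\overline{H}$ with $E(\overline{G})=E(\overline{H})+S$.

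For the forward direction I would take a matrix $M\in\cals(G^{\N})$ of rank at most $2$ and let $H$ be the graph on $V(G)$ whose edges are the supports of the off-diagonal entries of $M$. By the block description of $\cals(G^{\N})$, the graph $H$ agrees with $G$ outside the $\N$-block and differs from $G$ only by edges added inside $\N$, so $H\in G^{\N}$ and $M\in\cals(H)$, giving $\mr(H)\leq\rank(M)\leq 2$. Theorem~\ref{thm:mr2Comp} then guarantees that $\overline{H}$ has the stated complement form, and the complement translation above gives $E(\overline{G})=E(\overline{H})+S$ with $S$ the set of added $\N$-edges, which is precisely the asserted structure.

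For the backward direction I would assume $\overline{G}$ has edge set $E(\overline{H})+S$ with $\overline{H}$ of the form in Theorem~\ref{thm:mr2Comp} and $S$ a set of edges inside $\N$, and let $H$ be the complement of $\overline{H}$. Then $H=G+S$ is obtained from $G$ by adding edges inside $\N$, so $H\in G^{\N}$, while Theorem~\ref{thm:mr2Comp} gives $\mr(H)\leq 2$; since $\cals(H)\subseteq\cals(G^{\N})$ this yields $\mr(G^{\N})\leq 2$. The one step needing care---and the place a loose argument could break---is the inclusion $\cals(H)\subseteq\cals(G^{\N})$: given $M\in\cals(H)$ one must produce the decomposition $M=A+\left[\begin{smallmatrix}O&O\\O&B\end{smallmatrix}\right]$ with $A\in\cals(G)$, obtained by zeroing the off-diagonal of the $\N$-block of $M$ to form $A$ and absorbing the remainder into $B$. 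This uses exactly that $G$ and $H$ differ only inside $\N$ and that $\N$ is independent in $G$, so I expect this bookkeeping, rather than any substantive difficulty, to be the main point to verify.
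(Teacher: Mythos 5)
Your proposal is correct and takes essentially the same approach as the paper: both arguments reduce the statement to Theorem~\ref{thm:mr2Comp} via the identity $\mr(G^{\N})=\min_{H\in G^{\N}}\mr(H)$ and then translate adding edges inside $\N$ to $G$ into deleting edges of the clique on $\N$ from $\overline{G}$. The only difference is that you verify the matrix-level bookkeeping behind $\cals(G^{\N})=\bigcup_{H\in G^{\N}}\cals(H)$ explicitly, whereas the paper treats this as immediate from the remark following the definition of $\cals(G^{\N})$.
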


\begin{proof}
First, suppose that $G^{\N}$ is a probe graph such that $\overline{G}$ has vertex set $V(H)$ and edge set $E(\overline{H})+S$, where $H$ is a graph whose complement has the form described in Theorem~\ref{thm:mr2Comp} and $S$ is a subset of the edges in the complete graph $\overline{N}$. Removing the edges $S$ from $\overline{G}$ corresponds to adding these edges to $G$. Since $S$ is contained in $\overline{N}$, the graph $G$ with the edges in $S$ is equivalent to adding edges within $\N$ to $G^{\N}$, and the result of this edge addition is the graph $H$. Therefore, since $\mr(H)\leq 2$, we have $\mr(G^{\N})\leq 2$.

Now, suppose $\mr(G^{\N})\leq 2$. If $G$ is a graph such that $\mr(G)\leq 2$, then $\overline{G}$ has the form described by Theorem~\ref{thm:mr2Comp}. This is the case where $S$ is the empty set.
Next, suppose $G$ is a graph for which $\mr(G)>2$. Given the set of non-probe vertices $\N$, there must be a set of edges whose addition to $G^{\N}$ forces $\overline{G^{\N}}$ to be of the form described in Theorem~\ref{thm:mr2Comp}. Let one such resulting graph be called $H$. Working backward, there must be a way to delete edges from $H$ to obtain $G$, and this corresponds to adding edges within $\overline{H}$ to obtain $\overline{G}$. Recall that since $\N$ must be an independent set in $G$, these vertices must form a clique in $\overline{G}$. This means that adding edges to a subgraph of $\overline{H}$ must result in that subgraph becoming a clique. Therefore, the edge set of $\overline{G}$ must be $E(\overline{H})+S$, where $S$ is a subset of the edges in the complete graph $\overline{N}$.\end{proof}

We now consider the case of probe graphs with high minimum rank. Observe that since $\mr(G)\leq n-1$ for all graphs $G$, the same holds for the probe graphs. First, we characterize probe graphs with minimum rank $n-1$. Note that in this case, the problem reduces to the usual minimum rank since this can only occur when $N=1$.

\begin{Proposition}
Let $G^{\N}$ be a probe graph on $n$ vertices. Then $\mr(G^{\N})=n-1$ if and only if $|\N|\leq 1$ and $G=P_n$.
\end{Proposition}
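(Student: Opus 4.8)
The plan is to prove both directions, establishing that $\mr(G^{\N})=n-1$ forces both $|\N|\leq 1$ and $G=P_n$. For the backward direction, suppose $|\N|\leq 1$ and $G=P_n$. By Observation~\ref{zeroiszerogam1}, when $|\N|\leq 1$ the probe graph parameters collapse to the ordinary graph parameters, so $\M(G^{\N})=\M(G)$ and hence $\mr(G^{\N})=\mr(G)$. Then $\mr(P_n)=n-1$ follows immediately from Theorem~\ref{PathMaxnull1}, completing this direction.

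The forward direction splits naturally according to the size of $\N$. First I would rule out $|\N|\geq 2$: by Proposition~\ref{prop.gammabound} we have $\mr(G^{\N})\leq n-|\N|\leq n-2$, which contradicts $\mr(G^{\N})=n-1$. Thus we must have $|\N|\leq 1$. Once $|\N|\leq 1$ is established, Observation~\ref{zeroiszerogam1} again lets me pass to the ordinary minimum rank, giving $\mr(G)=\mr(G^{\N})=n-1$. Applying Theorem~\ref{PathMaxnull1} in the forward direction then forces $G=P_n$, which is exactly the conclusion.

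The argument is essentially a short reduction, so I do not anticipate a substantive obstacle; the only subtle point is being careful that Observation~\ref{zeroiszerogam1} is phrased in terms of the zero forcing numbers, so I should instead invoke the underlying fact that when $|\N|\leq 1$ the matrix family $\cals(G^{\N})$ coincides with $\cals(G)$ (there are no non-probe pairs among which to add edges, so the block $B$ is $1\times 1$ or empty and contributes nothing beyond the free diagonal already permitted in $\cals(G)$). With that identification in hand, the minimum rank parameters agree and Theorem~\ref{PathMaxnull1} does all the work. The main care point, then, is simply justifying the reduction $\cals(G^{\N})=\cals(G)$ cleanly rather than any genuinely hard estimate.
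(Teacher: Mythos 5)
Your proof is correct and follows essentially the same route as the paper: Proposition~\ref{prop.gammabound} rules out $|\N|\geq 2$, the identification $\cals(G^{\N})=\cals(G)$ when $|\N|\leq 1$ reduces to the ordinary minimum rank, and Theorem~\ref{PathMaxnull1} finishes both directions. Your closing remark is well taken — Observation~\ref{zeroiszerogam1} concerns zero forcing, not minimum rank, and the paper likewise relies (implicitly) on the matrix-family identification rather than that observation.
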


\begin{proof}
It is well known that for a graph $G$, $\mr(G)=n-1$ if and only if $G=P_n$, as stated in Theorem~\ref{PathMaxnull1}. If $|\N|\leq 1$ then $\mr(G^{\N})= \mr(G)$, so if $G=P_n$, then $\mr(G^{\N})=n-1$. If $\mr(G^{\N})=n-1$, then by Proposition~\ref{prop.gammabound} we have $|\N|\leq 1$, which implies that $\mr(G)=\mr(G^{\N})=n-1$, so $G=P_n$. 
\end{proof}

For the remainder of the section, we work towards a characterization of probe graphs for which the minimum rank is $n-2$. In the case of the minimum rank of graphs, the following is known. 

\begin{Theorem}[Theorem 5.1 in \cite{johnson2009graphs}]\label{Maxnull2}
Let $G$ be a graph. Then $\mr(G) =n- 2$ if and only if $G$ is a graph of two
parallel paths (but $G\neq P_n$) or $G$ is one of the types shown in Figure~\ref{SpecialGraphswithM2labeled}.
\end{Theorem}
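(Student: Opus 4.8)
The plan is to exploit the identity $\mr(G)=n-\M(G)$ together with Fiedler's theorem (Theorem~\ref{PathMaxnull1}) to recast the claim as a characterization of maximum nullity exactly two. Since $\mr(G)=n-1$ precisely when $G=P_n$, every graph $G\neq P_n$ already satisfies $\M(G)\geq 2$. Hence $\mr(G)=n-2$ is equivalent to the conjunction ``$G\neq P_n$ and $\M(G)\leq 2$'', and the whole theorem reduces to characterizing graphs with $\M(G)\leq 2$ among non-paths. I would therefore organize the entire argument around establishing when the upper bound $\M(G)\leq 2$ holds.

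For the forward (easier) direction I would split into the two listed families. If $G$ is a graph of two parallel paths with $G\neq P_n$, then $\Z(G)=2$ by Theorem~\ref{Zforce2}, so $\M(G)\leq\Z(G)=2$; combined with $\M(G)\geq 2$ from the reduction above, this yields $\M(G)=2$ and hence $\mr(G)=n-2$. The special graphs of Figure~\ref{SpecialGraphswithM2labeled} form a fixed finite list whose zero forcing numbers exceed $2$, so here the bound $\M\leq\Z$ is unavailable; for each such graph I would instead verify $\M(G)=2$ by exhibiting one matrix in $\cals(G)$ of nullity $2$ and arguing $\M(G)\leq 2$ directly, e.g. by a cut-vertex reduction or by checking on the small graph that no three-dimensional null space is realizable.

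The reverse direction, that $\M(G)\leq 2$ forces $G$ into one of these families, is the main obstacle, since the inequality $\M(G)\leq\Z(G)$ runs the wrong way and cannot be inverted to bound $\M$ from below. I would argue by contrapositive, isolating a short list of \textbf{forbidden} configurations, each of which guarantees a matrix of nullity at least $3$: a vertex of degree $\geq 4$, three or more vertices of degree $3$, two non-adjacent degree-$3$ vertices, a $K_4$, and the relevant small minors or induced subgraphs. This is exactly the combinatorial data encoded in the structural conditions of Theorem~\ref{thm:emptycore} and Theorem~\ref{thm:non-emptycore}. Using additivity of $\M$ over connected components I would first reduce to connected $G$; then treat the acyclic case via the classical fact that for a tree $\M(T)$ equals its path-cover number, so $\M(T)\leq 2$ means $T$ is covered by two induced paths, producing exactly the empty-core parallel-paths structure; and finally handle graphs containing a cycle by combining the Hamiltonian-core and interior-edge analysis of Section~\ref{sec:Strucpaths}. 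The crux, and the step I expect to demand the heaviest machinery, is producing, whenever $G$ avoids every listed family, an \emph{explicit} symmetric matrix with a three-dimensional kernel: it is this construction, rather than any counting argument, where the real content of the theorem resides.
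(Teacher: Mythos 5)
First, a contextual point: the paper itself gives no proof of this statement at all---it is quoted, with attribution, as Theorem 5.1 of \cite{johnson2009graphs} (Johnson--Loewy--Smith). So there is no internal argument to compare yours against, and your attempt must stand as a self-contained proof. Its skeleton starts well: the reduction via Theorem~\ref{PathMaxnull1} of ``$\mr(G)=n-2$'' to ``$G\neq P_n$ and $\M(G)\le 2$'' is correct, and the forward direction for parallel paths via Theorem~\ref{Zforce2} ($\M(G)\le \Z(G)=2$) is sound. The fatal problem is the reverse direction. Your list of ``forbidden configurations each of which guarantees a matrix of nullity at least $3$''---a vertex of degree $\ge 4$, three or more vertices of degree $3$, two non-adjacent degree-$3$ vertices---is false for general graphs. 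These are obstructions for \emph{trees} (Theorem~\ref{thm:emptycore}); you have transplanted them to graphs with cycles, where they fail. Concretely: the net (a triangle with a pendant vertex at each of its three vertices) has three degree-$3$ vertices; the graph obtained from $C_4$ by attaching pendant vertices at two opposite (hence non-adjacent) vertices has two non-adjacent degree-$3$ vertices; and the graph obtained from $P_3\vee K_1$ by attaching a pendant to the dominating vertex has a vertex of degree $4$. All three are graphs of two parallel paths (one checks the ordering condition directly, or exhibits forcing sets showing $\Z=2$), so they have $\M=2$. Your contrapositive would thus ``prove'' $\M\ge 3$ for graphs that the theorem itself---and your own forward direction---shows have $\M=2$. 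For graphs with cycles, the correct obstructions are the core/interior-edge/insertion-point conditions of Theorem~\ref{thm:non-emptycore}, which are not degree conditions.

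Two further gaps. The ``special graphs'' of Figure~\ref{SpecialGraphswithM2labeled} are \emph{types}, i.e.\ infinite families in which the barred labels denote paths of arbitrary length, not a fixed finite list; so ``checking on the small graph'' cannot establish $\M\le 2$ for all of them, and since edge subdivision can increase $\M$ (Lemma~\ref{edgesubdlemm}), verifying the base graph does not propagate to the whole family---one genuinely needs the cut-vertex/rank-spread reduction you only gesture at. Finally, even granting the correct structure theory for parallel paths, the crux of the theorem is exactly the step you defer as ``heaviest machinery'': showing that \emph{every} connected graph outside the two listed families admits a symmetric matrix of nullity $3$. That construction is the substance of the Johnson--Loewy--Smith proof, and your proposal does not supply it; as written, the argument is an outline whose one concrete mechanism (the degree-based forbidden configurations) is wrong.
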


\begin{figure}[h]\label{SpecialGraphswithM2labeled}
\centering
\includegraphics[scale=.5]{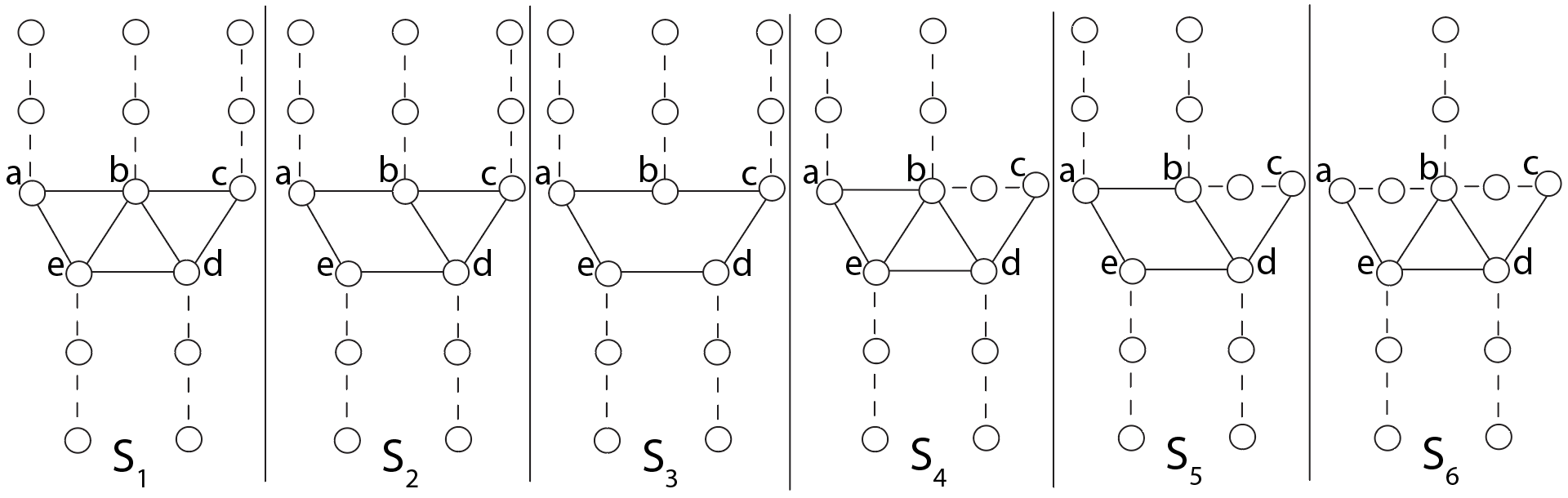}
\caption{Special Graphs}
\end{figure}

We require some additional definitions.
A graph $H$ is a {\em homeomorph} of a graph $G$ if $H$ may be obtained from $G$ by a sequence of edge subdivisions. 
We use $hK_4$ and $hK_{2,3}$ to denote graphs that are homeomorphs of $K_4$ and $K_{2,3}$. 
The graphs $K_4$ and $K_{2,3}$ are considered homeomorphs of themselves.

Before stating the characterization, we require several lemmas. Applying the following known Lemma to two specific graphs leads to Remark~\ref{rem:K4K23}.

\begin{Lemma}[Lemma 2.1 in \cite{johnson2009graphs}]\label{edgesubdlemm}
 Let $G'$ be the graph resulting from an edge subdivision in the graph $G$. Then $\M(G') = \M(G)$ or $\M(G') = \M(G) + 1$.   
\end{Lemma}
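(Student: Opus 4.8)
The plan is to reduce the statement to the equivalent claim $\mr(G) \le \mr(G') \le \mr(G)+1$ and prove the two inequalities by explicit matrix constructions. Since $G'$ has exactly one more vertex than $G$ and $\mr(H)+\M(H)=|V(H)|$ for every graph $H$, the two possibilities $\M(G')=\M(G)$ and $\M(G')=\M(G)+1$ correspond precisely to $\mr(G')=\mr(G)+1$ and $\mr(G')=\mr(G)$. Let $w$ be the new degree-$2$ vertex created by subdividing the edge $uv$, so that $N_{G'}(w)=\{u,v\}$ and, within $V(G)$, the graph $G'$ has exactly the edges of $G$ except $uv$. I would write every matrix of $\cals(G')$ in block form $\begin{bmatrix} M & c \\ c^\top & \delta\end{bmatrix}$, where $M$ is the principal submatrix indexed by $V(G)$ (so $M\in\cals(G-uv)$), $\delta$ is the free diagonal entry at $w$, and $c=\alpha e_u+\beta e_v$ with $\alpha,\beta\neq 0$ records the two edges at $w$.

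For the upper bound $\mr(G')\le\mr(G)+1$, I would start from an optimal $A\in\cals(G)$ with $A_{uv}=a\neq 0$ and reverse the elimination of $w$. Choosing $\alpha,\beta\neq0$ arbitrarily and setting $\delta=-\alpha\beta/a$ forces $M:=A+\delta^{-1}cc^\top$ to have $(u,v)$-entry $a+\delta^{-1}\alpha\beta=0$, while every other off-diagonal entry of $M$ equals that of $A$ and the diagonal is unconstrained; hence $M\in\cals(G-uv)$ and $B=\begin{bmatrix} M & c\\ c^\top & \delta\end{bmatrix}\in\cals(G')$. By construction the Schur complement of $B$ at $w$ is exactly $A$, so $\rank B=1+\rank A=\mr(G)+1$, which gives the bound.

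For the lower bound $\mr(G')\ge\mr(G)$, I would take an optimal $B\in\cals(G')$ with blocks $M,c,\delta$ as above and set $A=M+s\,cc^\top$ for any scalar $s\neq0$. Since $cc^\top$ is supported on the $\{u,v\}$ block, $A$ has $(u,v)$-entry $s\alpha\beta\neq0$ and agrees with $M$ everywhere off $\{u,v\}$, so $A\in\cals(G)$. To bound its rank I would split on whether $c\in\operatorname{col}(M)$: if $c=Mz$ then $cc^\top=Mzz^\top M$, so $\operatorname{col}(A)\subseteq\operatorname{col}(M)$ and $\rank A\le\rank M\le\rank B$ (as $M$ is a principal submatrix of $B$); if $c\notin\operatorname{col}(M)$ then $\rank A\le\rank M+1=\rank[M\ \ c]\le\rank B$ (as $[M\ \ c]$ is a row-submatrix of $B$). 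Either way $\mr(G)\le\rank A\le\rank B=\mr(G')$.

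The step I expect to be the main obstacle is exactly this lower bound in the degenerate case $\delta=0$: there the Schur complement at $w$ is undefined, so one cannot simply eliminate $w$ to recover a matrix for $G$, and this is precisely the configuration in which $\mr$ fails to increase. The remedy above avoids $\delta$ altogether by perturbing $M$ with a multiple of $cc^\top$ and comparing ranks against submatrices of $B$; verifying that this single construction respects the zero--nonzero pattern of $G$ and simultaneously handles both $c\in\operatorname{col}(M)$ and $c\notin\operatorname{col}(M)$ is the technical heart of the argument.
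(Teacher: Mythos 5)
Your proof is correct. Note that the paper does not actually prove this statement: it is quoted verbatim as Lemma 2.1 of \cite{johnson2009graphs}, so there is no in-paper argument to compare against; your proposal is a complete, self-contained substitute for that citation. Both halves of your argument check out. For the upper bound, since $uv\in E(G)$ forces $A_{uv}=a\neq 0$, your choice $\delta=-\alpha\beta/a$ is nonzero, the matrix $M=A+\delta^{-1}cc^\top$ lands in $\cals(G-uv)$ (only the $\{u,v\}$ block is perturbed, and the $(u,v)$ entry cancels exactly), and the Schur complement identity gives $\rank B=1+\rank A$, so $\mr(G')\leq \mr(G)+1$. For the lower bound, the rank-one perturbation $A=M+s\,cc^\top$ genuinely sidesteps the degenerate case $\delta=0$ where eliminating $w$ is impossible, and your case split is sound: if $c=Mz$ then $A=M(I+szz^\top M)$ has column space inside $\operatorname{col}(M)$, while if $c\notin\operatorname{col}(M)$ then $\rank A\leq\rank M+1=\rank[M\ \ c]\leq\rank B$ because $[M\ \ c]$ is a row-submatrix of $B$; either way $\mr(G)\leq\mr(G')$. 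Combined with rank--nullity on $|V(G')|=|V(G)|+1$, this yields exactly the dichotomy $\M(G')\in\{\M(G),\M(G)+1\}$. The one presentational refinement I would suggest is to state explicitly at the outset that every $B\in\cals(G')$ has the claimed block form with $\alpha,\beta\neq 0$ precisely because $w$ has $N_{G'}(w)=\{u,v\}$, since that is what licenses both directions of the translation between $\cals(G')$ and $\cals(G)$.
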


\begin{Remark}\label{rem:K4K23}
    Recall $\M(K_4)=M(K_{2,3})=3$. Then, by Lemma~\ref{edgesubdlemm}, any graph $G$ which contains either an $hK_4$ or an $hK_{2,3}$ subgraph satisfies $\M(G) \geq 3$; in particular, $\mr(G)>|V(G)|-2$.
\end{Remark}

Next, we prove two lemmas which we will need for our characterization.

\begin{Lemma}\label{specialgraphcase} 
Let $|\N| = 2$. If $S_i$ be a graph on $n$ vertices where $i \in \{1,2,3,4,5,6\}$ as in Figure~\ref{SpecialGraphswithM2labeled}.
\[\begin{cases}
\mr(S_i^{\N}) = n-2 &\text{ if } i = 2 
\text{ and }\N = \{b,e\}\\
& \text{ if } i = 3 \text{ and }\N = \{b,d\}\\
&\text{ if } i = 5 \text{ and } \N = \{b,e\}\\
\mr(S_i^{\N}) <n- 2 & \text{otherwise}
\end{cases}\]

\end{Lemma}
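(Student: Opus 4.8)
The plan is to reduce each computation of $\mr(S_i^{\N})$ to an ordinary minimum‑rank question about a single graph. Since $|\N|=2$ and $\N=\{x,y\}$ is independent, every matrix in $\cals(S_i^{\N})$ coincides with a matrix of $\cals(S_i)$ except possibly in the $(x,y)$ entry, which is now unconstrained: taking that entry equal to $0$ recovers $\cals(S_i)$, while taking it nonzero recovers $\cals(S_i^{+})$, where $S_i^{+}=S_i+xy$ is the graph obtained by adding the lone admissible edge inside $\N$. Hence
\[
\mr(S_i^{\N})=\min\{\mr(S_i),\,\mr(S_i^{+})\}.
\]
Because each $S_i$ is one of the graphs in Theorem~\ref{Maxnull2}, we have $\mr(S_i)=n-2$, so $\mr(S_i^{\N})=\min\{n-2,\mr(S_i^{+})\}\le n-2$ in every case. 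Thus the two‑sided claim collapses to a single question about the one‑edge augmentation $S_i^{+}$: we obtain $\mr(S_i^{\N})=n-2$ exactly when $\mr(S_i^{+})\ge n-2$, and since $\mr(S_i^{+})=n-1$ would force $S_i^{+}=P_n$ (impossible, as $S_i$ is not even a path), this occurs precisely when $\mr(S_i^{+})=n-2$.

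Next I would dispatch the three equality cases directly. For $(i,\N)\in\{(2,\{b,e\}),(3,\{b,d\}),(5,\{b,e\})\}$ I would exhibit the explicit decomposition of $S_i^{+}$ into two induced parallel paths, so that Theorem~\ref{Maxnull2} yields $\mr(S_i^{+})=n-2$ and therefore $\mr(S_i^{\N})=n-2$. Equivalently, one verifies $S_i^{+}$ against the structural characterization of Theorem~\ref{thm:non-emptycore}, which confirms that the two‑parallel‑path structure genuinely survives the edge addition.

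For every remaining pair $(i,\N)$, the goal is instead to show $\mr(S_i^{+})<n-2$, for which it suffices to prove $\M(S_i^{+})\ge 3$. Here the main tool is Remark~\ref{rem:K4K23}: I would locate an $hK_4$ or $hK_{2,3}$ subgraph inside $S_i+xy$. Adding $xy$ typically closes a second independent cycle or creates a degree‑$3$ branch vertex on an existing cycle, and in each such configuration one reads off a homeomorph of $K_4$ or $K_{2,3}$; then $\M(S_i^{+})\ge 3$, so $\mr(S_i^{\N})=\min\{n-2,\mr(S_i^{+})\}<n-2$. This handles $S_1,S_4,S_6$ for all independent pairs, and $S_2,S_3,S_5$ for every independent pair other than the three distinguished ones.

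The bulk of the work, and the main obstacle, is the case analysis itself: for each of the six labeled graphs of Figure~\ref{SpecialGraphswithM2labeled} one must enumerate all independent pairs $\{x,y\}$ and decide on which side of the dichotomy the augmented graph $S_i^{+}$ falls. I would keep the enumeration manageable by exploiting the automorphisms of each $S_i$, so that only orbit representatives of independent pairs need be treated, and by observing that a pair fails to be distinguished the moment the new edge produces a forbidden $hK_4$ or $hK_{2,3}$. The delicate point is confirming, in the three distinguished cases, that no such homeomorph appears so that $\M(S_i^{+})=2$ is preserved.
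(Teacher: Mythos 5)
Your global reduction is sound and matches the paper's: since $\N=\{x,y\}$ is independent, $\cals(S_i^{\N})=\cals(S_i)\cup\cals(S_i^{+})$, so $\mr(S_i^{\N})=\min\{\mr(S_i),\mr(S_i^{+})\}=\min\{n-2,\mr(S_i^{+})\}$, and $\mr(S_i^{+})=n-1$ is impossible since $S_i^{+}\neq P_n$. The genuine gap is in how you handle the three equality cases. You plan to prove $\mr(S_i^{+})=n-2$ by exhibiting a decomposition of $S_i^{+}$ into two induced parallel paths (equivalently, verifying Theorem~\ref{thm:non-emptycore}). But in each of these cases the augmented graph is not a graph of two parallel paths at all: adding $be$ to $S_2$ yields $S_1$, adding $bd$ to $S_3$ yields $S_2$, and adding $be$ to $S_5$ yields $S_4$, i.e., $S_i^{+}$ is again one of the special graphs of Figure~\ref{SpecialGraphswithM2labeled}. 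These special graphs are never graphs of two parallel paths --- indeed, by Theorem~\ref{Zforce2} a graph of two parallel paths other than $P_n$ has zero forcing number $2$, whereas the special graphs have zero forcing number at least $3$ (this exclusivity is used elsewhere in the paper). So the decomposition you intend to exhibit does not exist, and within your dichotomy these three cases would be misclassified as $\mr<n-2$, contradicting the statement. The missing idea is that Theorem~\ref{Maxnull2} has \emph{two} branches, and here equality is achieved through the second branch: one must check that $S_i^{+}$ \emph{remains one of the listed special types}, which is exactly what the paper's proof does.

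A secondary weakness concerns the ``otherwise'' cases. Your only tool there is Remark~\ref{rem:K4K23}, i.e., locating an $hK_4$ or $hK_{2,3}$ subgraph in $S_i^{+}$. This is sufficient when such a subgraph exists, but it is not guaranteed to exist in every failing case: containing no $hK_4$ or $hK_{2,3}$ is equivalent to outer-planarity, and an edge addition can keep $S_i^{+}$ outer-planar while destroying both the two-parallel-paths structure and membership in Figure~\ref{SpecialGraphswithM2labeled} (for instance by creating too many cycles or pendant paths relative to what the structure theorems allow), in which case $\M(S_i^{+})\geq 3$ holds but your homeomorph search comes up empty. The complete and simpler argument, used by the paper, is again Theorem~\ref{Maxnull2} directly: for every non-distinguished pair, verify that $S_i^{+}$ is neither a graph of two parallel paths nor one of the special types, whence $\mr(S_i^{+})<n-2$.
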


\begin{proof}
Because of the result of Theorem~\ref{Maxnull2}, $\mr(G) =n-2$ if and only if $G$ is a graph of parallel paths or one of the types in Figure~\ref{SpecialGraphswithM2labeled}. Adding an edge between two nonadjacent vertices in $S_i$ will ensure that the minimum rank of $S_i^{\N}$ is $n-2$ if and only if $S_i^{\N}$ is a graph of parallel paths or remains one of the $S_i$ graphs. This only occurs in the case that $\{be\}$ is added to $S_2$ which yields $S_1$; $\{bd\}$ is added to $S_3$ which yields $S_2$; or $\{be\}$ is added to $S_5$ which yields $S_4$. In any other case, if an edge is added between two vertices in $S_i$, we obtain a graph that is not in Figure~\ref{SpecialGraphswithM2labeled} and is not a graph of parallel paths. 
\end{proof}

Finally, we provide our characterization of probe graphs with minimum rank $n-2$.

\begin{Theorem}
For a connected probe graph $G^{\N}$ on $n$ vertices, $\mr(G^{\N})=n-2$ if and only if 
\begin{itemize}
    \item[1.] $|\N|\leq 1$ and $G$ is a graph of parallel paths which is not a path or $G$ is one of the types in Figure~\ref{SpecialGraphswithM2labeled}; or
    \item[2.] $G$ is a graph of two parallel paths with a non-empty core and $|\N| = 2$ such that $\N$ satisfies the conditions in Proposition~\ref{prop:mess1}, \ref{prop:mess2}, \ref{prop:cycle.mess1}, \ref{prop:cycle.mess2}, or  \ref{prop:alpha.mess}; or
    \item[3.] $G$ is a tree of two parallel paths, $|\N| = 2$, and $G^{\N}$ satisfies the conditions in Proposition~\ref{prop:tree.uniquness}; or
    \item[4.] $G$ is a graph $S_2$, $S_3$, or $S_5$ from Figure~\ref{SpecialGraphswithM2labeled} and $\N=\{b,e\}$, $\N=\{b,d\}$, or $\N=\{b,e\}$, respectively.
\end{itemize}
\end{Theorem}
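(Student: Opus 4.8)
The plan is to convert the minimum-rank question into a maximum-nullity one and exploit that $\cals(G^{\N})=\bigcup_{H\in G^{\N}}\cals(H)$, where $H$ ranges over the graphs obtained by adding edges inside $\N$. This gives $\M(G^{\N})=\max_{H\in G^{\N}}\M(H)$ and hence $\mr(G^{\N})=\min_{H\in G^{\N}}\mr(H)$. Since $\mr(G^{\N})=n-2$ is the same as $\M(G^{\N})=2$, Proposition~\ref{prop.gammabound} forces $|\N|\le 2$, so the proof splits on $|\N|$. When $|\N|\le 1$ we have $\cals(G^{\N})=\cals(G)$, so $\mr(G^{\N})=\mr(G)$ and the statement is exactly Theorem~\ref{Maxnull2} together with Theorem~\ref{PathMaxnull1}; this yields item 1. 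The substance is the case $|\N|=2$, where $G^{\N}=\{G,G^{+}\}$ and $\mr(G^{\N})=\min\{\mr(G),\mr(G^{+})\}$.

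For $|\N|=2$, note first that $\M(G^{\N})\ge|\N|=2$ already gives $\mr(G^{\N})\le n-2$, so $\mr(G^{\N})=n-2$ holds if and only if \emph{both} $\mr(G)\ge n-2$ and $\mr(G^{+})\ge n-2$. By Theorem~\ref{Maxnull2} and Theorem~\ref{PathMaxnull1}, a connected graph has minimum rank at least $n-2$ precisely when it is a graph of two parallel paths (including $P_n$) or one of the special graphs $S_1,\dots,S_6$. I would therefore organize the argument by the type of $G$: (i) $G$ a special graph, (ii) $G$ a graph of two parallel paths with non-empty core, and (iii) $G$ a tree of two parallel paths (including $P_n$).

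In case (i), since $G$ is special we have $\mr(G)=n-2$, so the requirement collapses to $\mr(G^{+})\ge n-2$, which is exactly Lemma~\ref{specialgraphcase}: it singles out $S_2,S_3,S_5$ with the indicated choices of $\N$, giving item 4. In case (ii), $G$ already has $\mr(G)=n-2$, and because $G$ has a cycle so does $G^{+}$, so $G^{+}\ne P_n$; thus $\mr(G^{+})\ge n-2$ means $G^{+}$ is a graph of two parallel paths or a special graph. Granting (as discussed below) that $G^{+}$ cannot be special, $\mr(G^{+})\ge n-2$ is equivalent to $G^{+}$ being a graph of two parallel paths, which is characterized by Propositions~\ref{prop:mess1}, \ref{prop:mess2}, \ref{prop:cycle.mess1}, \ref{prop:cycle.mess2}, and \ref{prop:alpha.mess} according to whether $G[R]$ is a cycle, whether $\N\subseteq R$, and whether the insertion points are distinct; this yields item 2. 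Case (iii) is identical, with $G^{+}$ unicyclic and its being a graph of two parallel paths characterized by Proposition~\ref{prop:tree.uniquness}, giving item 3.

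The main obstacle is completeness of this split, namely ruling out the two ``crossings'' between the special-graph world and the parallel-paths world. One direction --- that adding an edge to a special graph never produces a graph of two parallel paths --- is precisely what Lemma~\ref{specialgraphcase} records, since the only productive additions return $S_1,S_2,S_4$, again special. The harder direction is to show that adding a single edge within an independent set to a graph of two parallel paths never produces a special graph; equivalently, that deleting any edge from $S_1,\dots,S_6$ never yields a graph of two parallel paths. I would dispatch this by finite inspection of the six special graphs, testing each edge-deletion against the structural characterizations in Theorems~\ref{thm:emptycore} and \ref{thm:non-emptycore} (and, where convenient, invoking Remark~\ref{rem:K4K23} to detect an $hK_4$ or $hK_{2,3}$). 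Once both crossings are excluded, the three sub-cases of $|\N|=2$ line up exactly with items 2, 3, and 4, and together with item 1 this completes the characterization.
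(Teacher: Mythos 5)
Your proposal is correct and follows essentially the same route as the paper's proof: use Proposition~\ref{prop.gammabound} to force $|\N|\le 2$, settle $|\N|\le 1$ with Theorems~\ref{PathMaxnull1} and~\ref{Maxnull2}, and for $|\N|=2$ reduce to requiring $\mr(G)\ge n-2$ and $\mr(G^{+})\ge n-2$, then split on whether $G$ is a special graph, a graph of two parallel paths with non-empty core, or a tree of two parallel paths, quoting Lemma~\ref{specialgraphcase}, Propositions~\ref{prop:mess1}, \ref{prop:mess2}, \ref{prop:cycle.mess1}, \ref{prop:cycle.mess2}, \ref{prop:alpha.mess}, and Proposition~\ref{prop:tree.uniquness}. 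If anything, you are slightly more careful than the paper on two points---noting that $\mr(G^{\N})\le n-2$ is automatic from Proposition~\ref{prop.gammabound} (so the ``at least one bound achieves equality'' clause needs no separate check), and explicitly flagging that one must rule out $G^{+}$ being one of the graphs in Figure~\ref{SpecialGraphswithM2labeled} when $G$ is a graph of two parallel paths (a finite inspection the paper leaves implicit)---both of which are sound refinements of the same argument.
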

\begin{proof}
By Proposition~\ref{prop.gammabound}, $2 = \M(G^{\N})\geq |\N|$ and so $2 \geq |\N|$.

If $|\N|\leq 1$, then $\mr(G^{\N}) = \mr(G)$. By Theorem~\ref{Maxnull2}, $\mr(G^{\N})=n-2$ if and only if $G$ is a graph of parallel paths ($G\not=P_n$) or $G$ is one of the types in Figure~\ref{SpecialGraphswithM2labeled}.

If $|\N| = 2$, we only need to consider $G$ and $G^+$, where $G^+$ is the graph resulting from adding the edge between vertices in $\N$.
Thus, it is clear that $\mr(G^{\N})=n-2$ if and only if $\mr(G)\geq n-2$ and $\mr(G^+)\geq n-2$, with at least one bound achieving equality. 
By Theorems~\ref{Maxnull2} and~\ref{PathMaxnull1}, $\mr(G)\geq n-2$ if and only if $G$ is a graph of parallel paths, one of the types in Figure~\ref{SpecialGraphswithM2labeled}, or a path graph.
Observe that it is impossible to add an edge to a graph of parallel paths, one of the types in Figure~\ref{SpecialGraphswithM2labeled}, or a path graph and have the resulting graph be a path graph. 
Therefore, $\mr(G^+)\leq n-2$, since the path is the only graph with minimum rank $n-1$. 
So we see $\mr(G^+)=n-2$ if and only if $G^+$ is either a graph of parallel paths or one of the types in Figure~\ref{SpecialGraphswithM2labeled}. 
We now consider cases to describe when this happens.

Suppose $G$ is a graph of two parallel paths with a non-empty core. By Proposition~Proposition~\ref{prop:mess1}, \ref{prop:mess2}, \ref{prop:cycle.mess1}, \ref{prop:cycle.mess2}, or  \ref{prop:alpha.mess}, the graph $G^+$ is a graph of two parallel paths if and only if $\N$ meets the conditions in the proposition.

Next, suppose $G$ is a graph of two parallel paths with an empty core. This is the same as $G$ being a tree of two parallel paths (note, this case includes the case where $G$ is a path). By Proposition~\ref{prop:tree.uniquness}, the graph $G^+$ is a graph of two parallel paths if and only if $\N$ satisfies the conditions in the proposition.

Finally, suppose $G$ is a graph of the types in Figure~\ref{SpecialGraphswithM2labeled}. By Lemma~\ref{specialgraphcase}, $G^+$ remains a graph in Figure~\ref{SpecialGraphswithM2labeled} if and only if $G$ is a graph $S_2$, $S_3$, or $S_5$ from Figure~\ref{SpecialGraphswithM2labeled} and $\N=\{b,e\}$, $\N=\{b,d\}$, or $\N=\{b,e\}$, respectively.

\end{proof}

\section*{Acknowledgements}
J.K. is supported by the National Science Foundation through NSF RTG grant DMS-2136228.

\bibliographystyle{plainurl}
\bibliography{bibliography}

\end{document}